\newtheorem{theorem}{Theorem}[section]
\newtheorem{claim}[theorem]{Claim}
\newtheorem{lemma}[theorem]{Lemma}
\newtheorem{corollary}[theorem]{Corollary}
\newtheorem{proposition}[theorem]{Proposition}
\newtheorem{question}[theorem]{Question}
\theoremstyle{definition}
\newtheorem{definition}[theorem]{Definition}
\newcommand{\Z}{\mathbb{Z}}
\newcommand{\T}{\mathbb{T}}
\newcommand{\N}{\mathbb{N}}
\newcommand{\Comp}{\mathop{\mathrm{Comp}}}
\newcommand{\R}{\mathbb{R}}
\newcommand{\M}{\mathcal{M}}
\newcommand{\res}{\upharpoonright}
\newcommand{\Char}{\mathbbm{1}}
\newcommand{\dist}{\textnormal{dist}}
\newcommand{\asdim}{\textnormal{asdim}}
\newcommand{\Succ}{\textnormal{Succ}}
\newcommand{\Int}{\textnormal{Int}}
\newcommand{\Bool}{\textnormal{Bool}}
\newcommand{\inv}{^{-1}}
\newcommand{\comp}{\mathrm{Comp}}
\newcommand{\pieces}{\mathrm{Pieces}}
\newcommand{\boldDelta}{\mathbf{\Delta}}
\newcommand{\boldSigma}{\mathbf{\Sigma}}
\newcommand{\boldPi}{\mathbf{\Pi}}
\def\mink{\overline{\dim}_{\textnormal{box}}}
\def\acts{\curvearrowright}
\title{$G_\delta$ Circle Squaring }
\author{Spencer Unger, Narmada Varadarajan, Felix Weilacher}
\thanks{The first author is partially supported by an NSERC Discovery grant.  The third author is supported by the NSF under award number DMS-2402064.}
\begin{document}

\begin{abstract}
    We show that a disk and square of the same area in $\R^2$ are equidecomposable by translations using $\boldDelta^0_2$ pieces. That is, pieces which are simultaneously $F_\sigma$ and $G_\delta$ sets. 
    This improves a result of M\'ath\'e-Noel-Pikhurko and is the best possible complexity in terms of the Borel hierarchy. 
    More generally we show that bounded sets $A,B \subseteq \R^n$ with small enough boundaries and the same nonzero Lebesgue measure are equidecomposable with pieces that are countable unions of finite Boolean combinations of translates of $A,B$, and open sets.
    The improvement comes from constructions of low complexity toasts and related objects which should be independently useful within Borel combinatorics. 
\end{abstract}

\maketitle

\section{Introduction}

\subsection{Circle squaring}

Laczkovich \cite{laczkovich1990equidecomposability} famously showed in 1990---answering a long-standing question of Tarski\cite{Tarski}---that a (closed) disk and square in $\R^2$ of the same area are equidecomposable by translations. That is, there is an $n \in \N$, vectors $x_1,\ldots,x_n \in \R^2$, and a partition $A_1,\ldots,A_n$ of the disk, so that $A_1 + x_1,\ldots,A_n + x_n$ is a partition of the square.
Laczkovich's result has been improved upon steadily over the last decade by a series of results showing that the pieces in this equidecomposition, originally chosen in a non-constructive way, can have stronger regularity properties. 
Grabowski, M\'ath\'e, and Pikhurko \cite{GMP} showed that the pieces can be chosen to be Lebesgue measurable and to have the property of Baire. 
Marks and the first author \cite{MU} showed that the pieces can even be Borel; that is, a constructive equidecomposition of the disk and square exists.

Once we have Borel pieces, the \emph{Borel Hierarchy} becomes a natural way to measure more finely just how constructive an equidecomposition is. 
Recall that in any topological space $X$, $\boldSigma^0_1(X)$ and $\boldPi^0_1(X)$ denote the collections of open and closed subsets of $X$ respectively. 
Then, for countable ordinals $\alpha < \omega_1$ we inductively define $\boldSigma^0_{\alpha}(X)$ as the collection of countable unions of sets in $\boldPi^0_\beta(X)$ for $\beta < \alpha$ and $\boldPi^0_\alpha(X)$ as the collection of complements of sets in $\boldSigma^0_\alpha(X)$. 
Finally, $\boldDelta^0_\alpha(X)$ is the collection of sets which are in both $\boldSigma^0_\alpha(X)$ and $\boldPi^0_\alpha(X)$. 
For example, $\boldSigma^0_2(X)$ and $\boldPi^0_2(X)$ denote exactly the classes
of $F_\sigma$ and $G_\delta$ sets respectively, and $\boldDelta^0_2(X)$ denotes
the collection of sets that are simultaneously $F_\sigma$ and $G_\delta$.

If $\mathcal{B}(X)$ denotes the set of Borel subsets of $X$, then in metrizable spaces we have $\mathcal{B}(X) = \bigcup_{\alpha < \omega_1} \boldDelta^0_\alpha(X)$.
The ordinal $\alpha$ at which a given Borel set appears in this union measures how many infinitary operations are needed to produce that set starting from open sets. 
Each $\boldDelta^0_\alpha(X)$ is an algebra, so finitary operations do not raise one's complexity in this sense. 
The space $X$ in the notation above is often omitted if the ambient space is clear from context.

The pieces produced by Marks and the first author's proof are (finite) Boolean combinations of $\boldSigma^0_4$ sets; in particular they are $\boldDelta^0_5$. 
M\'ath\'e, Noel, and Pikhurko \cite{MNP} then improved this by showing that the pieces can be Boolean combinations of $\boldSigma^0_2$ sets, in particular $\boldDelta^0_3$. 
Their pieces have other impressive properties such as Jordan-measurability, but these are orthogonal to the goal of this work and we will not say more about them.
Our main result is that we can further improve the complexity of the pieces to $\boldDelta^0_2$. 

\begin{theorem}\label{thm:circle_square}
    Let $A,B \subseteq \R^2$ be a closed disk and square of the same area. $A$ and $B$ are equidecomposable by translations using $\boldDelta^0_2$ pieces. 
\end{theorem}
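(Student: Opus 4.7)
The plan is to follow the flow-based strategy pioneered by Laczkovich and adapted to the Borel setting by Marks--Unger and M\'ath\'e--Noel--Pikhurko: reduce circle-squaring to the existence of an integer-valued bounded flow on a Schreier graph of a suitable $\Z^d$-action on $\R^2$, then solve the flow problem with the desired complexity. Since the number of pieces in the resulting equidecomposition is governed by the sup-norm of the flow, while the complexity of the pieces is exactly the complexity of the level sets of the flow, the task becomes: construct a $\boldDelta^0_2$ flow. The abstract of the paper signals that the crucial input will be the construction of $\boldDelta^0_2$ toasts, so the main bulk of the argument will be combinatorial rather than geometric.

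Concretely, I would first fix $d$ and vectors $v_1,\dots,v_d \in \R^2$ satisfying the Diophantine condition used in \cite{GMP,MU,MNP}, and pass to the free part $X \subseteq \R^2$ of the induced $\Z^d$-action by translations. The Schreier graph $G$ on $X$ has a natural $\boldDelta^0_2$ (in fact clopen) edge structure given by the generators, and the divergence function of $A$ and $B$ is $\Char_A - \Char_B$. Standard reductions tell us that a Borel flow $f:E(G)\to \Z$ with $\partial f = \Char_A - \Char_B$ and $\|f\|_\infty \le K$ yields an equidecomposition into $O(dK)$ pieces, each of the form $\{x : f(x,v_i) \ge k\}$. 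Hence it suffices to produce such a flow whose level sets are $\boldDelta^0_2$.

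The heart of the argument is a $\boldDelta^0_2$ hierarchical decomposition of $X$: a sequence of Borel partitions $\mathcal P_0 \prec \mathcal P_1 \prec \cdots$ into bounded $G$-connected tiles, each refining the last, so that every tile is $\boldDelta^0_2$ as a subset of $X$ and so that the tile boundaries are Folner in a quantitatively strong enough sense to absorb the Laczkovich discrepancy error. A toast of complexity $\boldDelta^0_3$ is standard in Borel combinatorics (one typically picks tiles via a maximal almost-disjoint family of growing balls centered at points of a suitable maximal independent set), but pushing down to $\boldDelta^0_2$ should require specifying each tile by a local, $\boldPi^0_1$-in-the-parameter rule that stabilizes quickly, so that tile-membership is defined by a $\Sigma^0_2$ condition and tile-non-membership also by a $\Sigma^0_2$ condition. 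I would try to implement this by using geometric data from $\R^2$ directly (rather than the abstract combinatorial structure of $G$ alone), since closed and open sets in $\R^2$ are already $\boldDelta^0_2$ in $X$. Once the toast is in place, the flow is built inductively: on each tile of $\mathcal P_n$ solve a finite linear program that corrects the residual divergence from stage $n-1$, noting that the corrections on disjoint tiles do not interfere and that the bounded finite nature of each tile keeps the level sets of the cumulative flow $\boldDelta^0_2$.

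The main obstacle is the toast construction. Every previous Borel proof has, at least implicitly, built a hierarchy that is only $\boldDelta^0_3$, and it is not obvious that a $\boldDelta^0_2$ hierarchy with Folner boundaries is even possible on the free part of an arbitrary $\Z^d$-action; what should save us here is the very concrete ambient space $\R^2$, which lets us replace abstract Borel combinatorial moves with geometric ones. After that, the flow construction and the translation into an equidecomposition follow the template of \cite{MU,MNP}, with complexity bookkeeping replacing the measure-theoretic or Jordan-measurability bookkeeping done there; verifying that this bookkeeping indeed keeps the pieces in $\boldDelta^0_2$ should be largely mechanical once the toast has the correct complexity.
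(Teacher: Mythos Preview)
Your high-level plan matches the paper's: reduce to finding a bounded integral $(\Char_A-\Char_B)$-flow on the Schreier graph of a random $\Z^d$-action, build it layer by layer along a toast, and observe that if the toast layers are $\boldDelta^0_2$ then so are the pieces. You also correctly flag the $\boldDelta^0_2$ toast as the crux. But your proposal stops exactly where the real work begins: you say you would ``use geometric data from $\R^2$ directly'' and hope that tile membership and non-membership are both $\Sigma^0_2$, without any mechanism for making that happen. This is the genuine gap. The paper's toast construction is not a tweak of the usual maximal-independent-set hierarchy; it goes through a chain of intermediate objects (clopen asymptotic-dimension witnesses $\to$ rainbow toast $\to$ bounded-geometry decomposition $\to$ toast) modeled on Gao--Jackson--Krohne--Seward, and the specific reason the layers end up $\boldDelta^0_2$ rather than $\boldDelta^0_3$ is that on the \emph{torus} one can choose the BGD layers $X_n$ to be \emph{open}: one takes interiors of certain boundary sets, and pseudo-clopenness (each axis-parallel rectangle has topological boundary meeting each orbit in boundedly many points) guarantees that passing to interiors costs only a bounded number of points per orbit, so the diameter bounds survive. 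None of this is visible from your sketch.

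Two smaller points. First, the paper works on $\T^k$, not $\R^2$, and compactness is genuinely used (to get finite covers by pseudo-clopen basic sets feeding into the asymptotic-dimension machinery); the transfer back to $\R^2$ is a separate easy lemma. Second, your description of toast as a refining sequence of partitions into connected tiles with F\o lner boundaries is off on all three counts: the layers need not partition $X$, need not refine one another, the pieces need not be connected (indeed the paper's are not), and no F\o lner condition is used anywhere---the MNP rounding step needs only the nesting/separation properties in Definition~\ref{def:toast_new} together with the approximate flows of Lemma~\ref{lem:approximate_flows}.
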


From the perspective of the Borel hierarchy, this is the best possible result. 
However, finer measures of complexity are possible. 
For instance, the \textit{difference hierarchy} gives a natural and well studied stratification of the class $\boldDelta^0_2$ of length $\omega_1$. Even finer is the \textit{Wadge hierarchy}, which classifies sets up to continuous reducibility (See \cite[Section 22]{kechris}). In both of these, the lowest two levels which are closed under Boolean combinations are $\boldDelta^0_1$, the class of clopen sets, and the class of Boolean combinations of open sets. Since the former is trivial in connected spaces, the following is probably the strongest question which it makes sense to ask:

\begin{question}\label{q:boolean_open}
    Let $A,B \subseteq \R^2$ be a closed disk and square of the same area. Are $A$ and $B$ equidecomposable (by translations) using pieces that are Boolean combinations of open sets?
\end{question}

By a result of Dubins, Hirsch, and Karush \cite{dubins1963scissor} a disk and square are never \textit{scissors congruent}, so the open sets involved in a positive answer to this question would still need to be somewhat complex. 

All of the previously mentioned results on so-called circle squaring are actually special cases of equidecomposability results for a much larger collection of sets in Euclidean spaces, and ours is no different. This collection is defined using the following parameter. 

\begin{definition}\label{def:box}
For any bounded set $A \subseteq \R^k$ and $\varepsilon>0$, let $N(\varepsilon,A)$ be the minimal number of boxes of side-length $\varepsilon$ needed to cover $A$.
The \emph{upper Minkowski dimension of $A$} is defined as
\begin{align*}
    \mink(A) &= \limsup_{\varepsilon \to 0}\frac{\log N(\varepsilon,S)}{\log(1/\varepsilon)}.
\end{align*}
\end{definition}

The Minkowski dimension is sometimes also called the \textit{box-counting dimension}
Clearly, if $A \subseteq \R^k$ then $\mink(A) \leq k$.
Even open sets in $\R^k$ can have boundaries with Minkowski dimension $k$, but, for instance, balls and cubes in $\R^k$ have boundaries with dimension $k-1$.

This condition turns out to be the main thing needed to generalize the circle-squaring results above: Laczkovich proved \cite{laczkovich1992small_boundary} that for any $k \in \N$, if $A,B \subseteq \R^k$ are Lebesgue measurable and bounded with $\mink(\partial A), \mink(\partial B) < k$ and $\lambda(A) = \lambda(B) > 0$ ($\lambda$ denotes the Lebesgue measure), then $A$ and $B$ are equidecomposable by translations.
The Grabowski--M\'ath\'e--Pikhurko result also applies in this case, 
as does the result of Marks and the first author if $A$ and $B$ are also Borel. 

In fact, the result of M\'ath\'e--Noel--Pikhurko produces an equidecomposition with pieces that are Boolean combinations of countable unions of Boolean combinations of translates of $A,B,$ and open sets. 
In particular, if $A,B \in \boldDelta^0_\alpha$, their pieces are Boolean combinations of $\boldSigma^0_\alpha$-sets, 
hence $\boldDelta^0_{\alpha+1}$
\footnote{It actually seems that the proof in \cite{MNP} gives $\boldDelta^0_\alpha$ pieces when $\alpha \geq 3$.}.
Our main result improves upon this general statement by one level of complexity.

\begin{theorem}\label{thm:equi} 
Let $A,B \subseteq \R^k$ be bounded sets with $\lambda(A) = \lambda(B) > 0$ and $\mink(\partial A), \mink(\partial B) <k$.  
Then,
$A$ and $B$ are equidecomposable by translations with pieces that are countable unions of Boolean combinations of translates of $A$, $B$, and open sets.
In particular, if $A,B \in \boldDelta^0_\alpha$ for $\alpha \geq 2$, so are the pieces of the equidecomposition. 
\end{theorem}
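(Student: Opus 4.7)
The plan is to follow the now-standard framework of converting the equidecomposition problem to a matching problem on the Schreier graph of a $\Z^d$-action on $\R^k$ by translations, and then to solve that matching using a hierarchical (``toast'') structure; the novelty must lie, per the abstract, in a low-complexity construction of the toast. First I would fix $d$ rationally independent translation vectors $v_1,\ldots,v_d\in\R^k$ satisfying appropriate Diophantine conditions relative to $A,B$, generating a free $\Z^d$-action whose Schreier graph $G$ is locally finite. By Laczkovich's original flow construction (using $\mink(\partial A),\mink(\partial B)<k$) and its Borel refinements in \cite{MU,MNP}, there exists a Borel integer flow $f$ on $G$ with coboundary $\Char_A-\Char_B$, with bounded values and supported in a bounded neighborhood of $A\triangle B$. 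Such a flow is the combinatorial heart of the equidecomposition and is by now a black box.

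The key technical step is to construct a toast of low complexity: a sequence $(T_n)_{n\in\N}$ of Borel partitions of a co-null set in $\R^k$ into bounded pieces, where successive levels refine appropriately, each tile has small graph-boundary relative to $G$, and crucially each tile is itself a countable union of finite Boolean combinations of open sets. Existing toast constructions (as in \cite{MU,MNP}) tend to produce tiles that are $\boldSigma^0_2$ or $\boldPi^0_2$ but not simultaneously in both, which is exactly what forces the extra level of complexity in the equidecomposition pieces. To improve this I expect one must absorb a measure-zero exceptional set into small open ``buffer zones'' separating tiles, and then use a careful nested shrinking of closed sets at each level so that both a tile and its complement can be exhibited as an $F_\sigma$ with structure controlled by open sets. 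This is where the main obstacle lies, and where I expect the bulk of the work.

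Given such a toast, the remainder is more routine. On each tile $t$ of a sufficiently fine level, restrict $f$ to $t$ and extract a bijection $\varphi_t:A\cap t\to B\cap t$ whose displacements $\varphi_t(x)-x$ lie in a fixed finite set $V\subseteq\Z v_1+\cdots+\Z v_d$; this is the standard conversion from flows to matchings on pieces with controlled boundary, and the control provided by the toast is precisely what makes the local problem solvable. Then for each $v\in V$ define $P_v=\{x\in A:\varphi_{t(x)}(x)=x+v\}$. Since the local matching $\varphi_t$ is determined by the finite intersection pattern of $t$ with a bounded collection of translates of $A$ and $B$, each $P_v$ is a countable disjoint union, indexed by tiles, of finite Boolean combinations of translates of $A$, $B$, and a tile (which is itself a countable union of Boolean combinations of open sets). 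This gives precisely the form required by the theorem. Finally, the $\boldDelta^0_\alpha$ conclusion for $\alpha\geq 2$ follows because each piece is manifestly $\boldSigma^0_\alpha$ in this form, and since the equidecomposition uses only finitely many pieces, the complement of any one piece is a finite union of the others, hence also $\boldSigma^0_\alpha$.
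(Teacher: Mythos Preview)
Your outline has a genuine gap in how the flow and the toast interact. You cite the existence of a Borel integral $(\Char_A-\Char_B)$-flow $f$ as a black box, then assert that on each tile $t$ the matching $\varphi_t$ ``is determined by the finite intersection pattern of $t$ with a bounded collection of translates of $A$ and $B$.'' But the black-box flow from \cite{MU,MNP} is not a local function of $A,B$; it is itself constructed via a toast, and its values at a point can depend on data arbitrarily far away. So restricting this $f$ to a tile and reading off a matching does not give you something locally determined by $A,B$ and the tile, and your complexity claim for the pieces $P_v$ does not follow. Nor can you discard $f$ and solve the matching on each tile from scratch, because the tiles of a toast are not a partition and do not satisfy $|A\cap t|=|B\cap t|$; the flow across tile boundaries is essential.

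The paper's actual mechanism is different: one starts from an explicit sequence of \emph{approximate} flows $\phi_m$ that are genuinely $r_m$-local functions of $A,B$ (Lemma~\ref{lem:approximate_flows}), and then uses the toast to \emph{round} these to an integral flow $\psi$ layer by layer (Lemma~\ref{lem:mnp_rounding}). The point is that on the edges meeting layer $T_n$, the rounded flow $\psi$ is a local function of $T_0,\ldots,T_n$ and $\phi_{m_0},\ldots,\phi_{m_n}$, hence a local function of finitely many toast layers together with $A,B$. That is what makes each piece lie in $\boldSigma_\Gamma(\{A,B\}\cup\{T_n\})$ (Corollary~\ref{cor:flow-to-equi}); the low-complexity toast then gives the theorem. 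Your sketch of the toast construction (``absorb an exceptional set into buffer zones,'' ``nested shrinking of closed sets'') also bears no resemblance to what the paper actually does---the route is asymptotic dimension witnesses $\to$ rainbow toast $\to$ bounded geometry decomposition $\to$ toast, with pseudo-0-dimensionality of the torus used to keep the BGD layers open---but you correctly identify this as the place where the work lies. Your ``in particular'' argument at the end is essentially right, though you should note that the complement is taken inside the ambient space, so one also needs $A\in\boldDelta^0_\alpha$ to conclude.
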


The ``in particular'' statement is shown as follows: In the main statement the pieces are $\boldSigma^0_\alpha$, but since they partition a $\boldDelta^0_\alpha$ set their complements are also $\boldSigma^0_\alpha$.

The results above are stated in terms of the action of $\R^k$ on itself by translation. However, in the proofs it is more convenient to work with the torus $\T^k := \R^k / \Z^k$. 
Here the Lebesgue measure becomes a probability measure and the notion of Minkowski dimension transfers nicely. 
It is also easy to turn an equidecomposition in $\T^k$ into one in $\R^k$. Specifically, the following are true:

\begin{lemma}\label{lem:reduce_to_torus}
    Let $A,B \subseteq [0,1/2]^k \subseteq \R^k$. Let $A',B' \subseteq \T^k$ be the images of $A$ and $B$ in the quotient $\T^k$.
    \begin{enumerate}
        \item $\mink(A) = \mink(A')$. 
        \item Let $\mathcal{A} \subseteq \mathcal{P}([0,1/2]^k)$ Suppose $A'$ and $B'$ are equidecomposable by translations (in $\T^k$) using pieces that are images of sets in $\mathcal{A}$. 
        Then $A$ and $B$ are equidecomposable by translations (in $\R^k$) using pieces that are Boolean combinations open sets and translates of sets in $\mathcal{A}$. 
    \end{enumerate}
\end{lemma}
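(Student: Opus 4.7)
For part (1), I will argue by a direct correspondence of covers. For $\varepsilon < 1/2$, any axis-aligned $\varepsilon$-box in $\T^k$ is the image under the quotient map $q$ of an $\varepsilon$-box in $\R^k$ (unique up to $\Z^k$-translation), and such a box meets $A'$ if and only if some $\Z^k$-translate of its lift meets $A$. Thus $N(\varepsilon, A) = N(\varepsilon, A')$ for all sufficiently small $\varepsilon$, and the Minkowski dimensions coincide.

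For part (2), I will lift the $\T^k$-equidecomposition and split each lifted piece according to how its translation wraps around the torus. Let the torus equidecomposition consist of pieces $A'_i = q(A_i)$ with $A_i \in \mathcal{A} \subseteq \mathcal{P}([0,1)^k)$, together with translation vectors $t'_i \in \T^k$; fix lifts $t_i \in \R^k$. First, each $A_i$ is already a subset of $A$: since $A \subseteq [0,1/2]^k$, the only $\Z^k$-translate of $A$ that meets $[0,1)^k$ is $A$ itself, and $A_i \subseteq q\inv(A') \cap [0,1)^k = A$. So $\{A_i\}$ partitions $A$, mirroring the torus partition.

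The obstacle is that $A_i + t_i$ may leave $[0,1)^k$. To correct, for each $m \in \Z^k$ I set
\[
A_{i,m} := A_i \cap \bigl(-t_i - m + [0,1)^k\bigr).
\]
Only finitely many $A_{i,m}$ are nonempty, and they partition $A_i$. By construction $A_{i,m} + t_i + m \subseteq [0,1)^k$; since $q(A_{i,m} + t_i + m) \subseteq A'_i + t'_i \subseteq B'$ and $B$ is the unique lift of $B'$ into $[0,1)^k$ (again using $B \subseteq [0,1/2]^k$), we get $A_{i,m} + t_i + m \subseteq B$. The collection $\{A_{i,m} + (t_i + m)\}_{i,m}$ then partitions $B$: coverage follows from the torus partition together with uniqueness of the $\Z^k$-correction, and disjointness follows similarly.

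Finally, the complexity claim is direct: the half-open box $-t_i - m + [0,1)^k$ is a finite Boolean combination of open half-spaces $\{x_j < c\}$, and $A_i$ is a translate of a set in $\mathcal{A}$ by the zero vector. Hence each $A_{i,m}$ is a Boolean combination of open sets and translates of sets in $\mathcal{A}$, as required. The main delicate step is verifying the partition claim for $B$; this is where the assumption $A, B \subseteq [0,1/2]^k$ is used crucially, both to secure uniqueness of the $\Z^k$-correction and to ensure that each $A_i$ is already contained in $A$.
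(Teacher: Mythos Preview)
Your proof is correct and follows the standard argument. The paper itself defers part (2) to external references (\cite{MUnew}, \cite{laczkovich1993small_or_large}) and dispatches part (1) with the observation that $[0,1/2]^k \to \T^k$ is an isometry; your write-up unpacks exactly what those references contain, namely the lift-and-split-by-$\Z^k$-correction argument, and your verification of the partition and complexity claims is accurate.
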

\begin{proof}
    (1) is clear since the map $[0,1/2]^k \to \T^k$ is an isometry. 
    For (2) see e.g. \cite[Proposition 2.5]{MUnew} or \cite[Section 2]{laczkovich1993small_or_large}.
\end{proof}

Thus, for the rest of this paper we will work in the torus $\T^k$. That is, we will prove the following.

\begin{theorem}\label{thm:equi_torus}
    Let $A,B \subseteq \T^k$ with $\lambda(A) = \lambda(B) > 0$ and $\mink(\partial A), \mink(\partial B) < k$. Then, $A$ and $B$ are equidecomposable by translations using pieces that are countable unions of Boolean combinations of translates of $A$,$B$, and open sets.
\end{theorem}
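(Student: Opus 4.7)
The plan is to reduce the equidecomposition problem to producing an integer-valued flow on the Schreier graph $G$ of a well-chosen $\Z^d$-action on $\T^k$ by translations $t_1,\dots,t_d$, and then to build that flow hierarchically using a toast of low Borel complexity. Concretely, one first chooses the $t_i$ so that the $\Z^d$-orbits equidistribute rapidly with respect to $A$ and $B$; Laczkovich's discrepancy estimates then furnish a $\Z$-valued, uniformly bounded flow $f$ on $G$ with $\partial f=\Char_A-\Char_B$. Such an $f$ decomposes into finite oriented paths from $A\setminus B$ to $B\setminus A$, and the target translation associated to each path determines one piece of the equidecomposition. The remaining task is to produce $f$ in a form where the inverse images of the translation labels lie in the class of sets claimed in the theorem.

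I would construct $f$ level by level along a toast: a nested sequence of partitions of cofinite portions of $\T^k$ into finite, $G$-connected cells of increasing diameter. At level $n$, picking a scale $\varepsilon_n\searrow 0$, the cells would be drawn from an open, grid-like family in $\T^k$ after excluding an $\varepsilon_n$-neighborhood of the relevant translates of $\partial A\cup\partial B$; the hypothesis $\mink(\partial A),\mink(\partial B)<k$ ensures that this excluded region has measure $o(1)$, so the toast covers a conull set. Within each cell, the local imbalance of $\Char_A-\Char_B$ can be cancelled by a finite combinatorial matching supported entirely inside that cell, with translates of $A$ and $B$ supplying the sources and sinks. The resulting contribution to $f$ is then a Boolean combination of translates of $A$, $B$, and open sets, and summing contributions across levels yields the desired flow almost everywhere. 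The remaining null set can be absorbed into a single piece via a translation that is locally trivial.

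The main obstacle is building the toast itself at the claimed complexity. In earlier constructions the cells effectively required an extra complementation step, forcing the final pieces up to $\boldDelta^0_3$. The improvement should come from selecting level-$n$ cells directly as countable unions of Boolean combinations of translates and open sets, avoiding any complement of a previously constructed $\boldSigma^0_2$ set. Concretely, the ``grid'' at each level should be drawn from an explicitly parametrized open family, and the exclusion of the boundary neighborhood should be realized as a union rather than as the complement of one. Granting such a low-complexity toast, the flow assembly, path decomposition, and passage to a Borel bijection all preserve the stated complexity, and the ``in particular'' clause about $\boldDelta^0_\alpha$ pieces for $\alpha\geq 2$ then follows from the self-complementation observation noted immediately after Theorem~\ref{thm:equi}.
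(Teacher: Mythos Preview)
Your outline gets the coarse architecture right---translations $t_1,\dots,t_d$, a Schreier graph, a bounded integral $(\Char_A-\Char_B)$-flow built layer by layer along a toast, then a passage from the flow to an equidecomposition---but there are two genuine gaps.

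First, your toast is tied to $A$ and $B$: you excise an $\varepsilon_n$-neighborhood of translates of $\partial A\cup\partial B$ from each layer and conclude only that the layers cover a conull set. An equidecomposition must partition \emph{all} of $A$, and your suggestion to ``absorb'' the leftover null set ``via a translation that is locally trivial'' does not make sense: a null set can have complicated intersection with $A$ and $B$ and there is no canonical way to match it with finitely many translations, let alone with pieces in the stated class. In the paper the toast is constructed purely from the Schreier graph of $\Z^d\acts\T^k$, with no reference to $A$ or $B$ whatsoever, and it covers every point of $\T^k$. The sets $A$ and $B$ enter only through the approximate flows $\phi_m$ (Lemma~\ref{lem:approximate_flows}), and the rounding of those flows along the toast (Lemma~\ref{lem:mnp_rounding}) is what produces the integral flow; there is no null exceptional set to handle.

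Second, and more seriously, the whole content of the theorem is the construction of a toast with $\boldDelta^0_2$ layers (Theorem~\ref{thm:intro_torus_toast}), and your sketch does not supply one. Saying the cells should be ``drawn from an explicitly parametrized open family'' with the exclusion ``realized as a union rather than as the complement of one'' is an aspiration, not a mechanism. The paper's route is quite different from anything grid-like: it observes that $\T^k$ is \emph{pseudo-0-dimensional} with respect to the $\Z^d$-action (axis-parallel rectangles have boundaries meeting each orbit finitely often), uses this to produce low-complexity asymptotic-dimension witnesses (Corollary~\ref{cor:basis_asdim}), assembles these into a \emph{rainbow toast} (Lemma~\ref{lem:rainbow_toast}), passes to a \emph{bounded geometry decomposition} with open layers by taking interiors at a carefully chosen moment (Lemma~\ref{lem:pc_bgd}), and finally runs the Gao--Jackson--Krohne--Seward construction to extract a $q$-toast whose layers lie in $\boldDelta_\Gamma(\{X_n,X_n^\infty\})\subseteq\boldDelta^0_2$ (Theorem~\ref{thm:bgd-toast}). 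The step that saves the extra level is precisely that the BGD layers $X_n$ are open, so $X_n^\infty$ is still open rather than merely $\boldSigma^0_2$; your proposal does not identify any analogue of this.
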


Theorem \ref{thm:equi} follows from Theorem \ref{thm:equi_torus} by rescaling to assume that $\overline{A},\overline{B} \subseteq [0,1/2]^k$ (using that $A$ and $B$ are bounded) then applying Lemma \ref{lem:reduce_to_torus}. 

Finally, we remark without proof that all the results of this paper can easily be made \textit{effective}. For instance, in our equidecomposition of a disk and square in $\R^2$, we can supply for each piece $C$ a computable function $c$ from $\N^2$ to the set of open rectangles with rational coordinates such that $C = \bigcap_n \bigcup_m c(n,m)$. This is to say that Theorems \ref{thm:circle_square}, \ref{thm:equi}, and \ref{thm:equi_torus} can also be made optimal from the point of view of the so-called \textit{lightface} Borel hierarchy.

A useful though probably unnecessary fact for obtaining these effective theorems is that, by a result of Marks and the first author \cite{MUnew}, the translations $x_1,\ldots,x_d \in \T^k$ chosen randomly below can actually be chosen to be algebraic irrationals for each coordinate. Given this, verifying the claim above is simply a matter of checking that the constructions in this paper and \cite{MNP} are sufficiently uniform in the $x_i$'s, $A$, and $B$. 

\subsection{Toasts of low complexity}\label{subsec:intro_toast}

Let $x_1,\ldots,x_d \in \T^k$ for some $d$ to be specified later, 
and let $\Z^d \acts \T^k$ be the action where the $i$-th generator in $\Z^d$ acts by translation by $x_i$. Assume also that this action is free. Later the $x_i$'s will be chosen randomly, in which case this will hold with probability 1. 
Let $S = \{\gamma \in \Z^d, ||\gamma||_\infty = 1\}$, and
let $G$ denote the \textit{Schreier graph} of our action $\Z^d \acts \T^k$ with respect to $S$. That is, $V(G) = \T^k$ and $E(G) = \{(x,\gamma x) \mid x \in \T^k, \gamma \in S\}$.




This graph $G$ is the space in which all of the combinatorics of the proof from \cite{MU} take place. One of the main ingredients in their proof is the existence of a (Borel) witness to hyperfiniteness of $G$ with special properties, called a \emph{toast}. 
Toast was introduced and first constructed for Schreier graphs of Borel actions of $\Z^d$ by Gao, Jackson, Krohne, and Seward \cite{GJKS}, and it has since become a central tool in descriptive combinatorics\footnote{This may be a confusing statement given the date attached to \cite{GJKS}. Though this preprint was only recently released, its results and techniques have been circulated in the community for some time. }. 

In general a toast for a graph $G$ on $X$ is a
sequence of sets $T_0,T_1,\ldots \subseteq X$ satisfying certain geometric properties (See Definition \ref{def:toast_new}). 
In the proof from \cite{MU}, the complexity of the \textit{layers}, $T_n$, has a large bearing on the complexity of the pieces of the final equidecomposition. 
When $X$ is Polish and the action on $X$ is continuous, it is easy to check that the toast constructed in \cite{MU} has layers which are Boolean combinations of $\boldPi^0_3$ sets. In particular they are $\boldDelta^0_4$. 

In the case when $X$ is 0-dimensional, meaning it has a basis of clopen sets, the construction in \cite{MU} starts one level lower, and so produces a toast with each layer a Boolean combination of $\boldSigma^0_3$ sets.
While $\T^k$ is not 0-dimensional, M\'ath\'e--Noel--Pikhurko found a modification of this toast construction specific to translation actions on the torus which allowed them to still get a toast with $\boldSigma^0_3$-layers, which was how they saved one of the levels in their final result. 
Our further savings are entirely due to the following improvement of this using a different construction:

\begin{theorem}\label{thm:intro_torus_toast}
    Let $\Z^d \acts \T^k$ be a free action by translations with Schreier graph $G$. For any $q \in \N$, there is a $q$-toast for $G$ with each layer $\boldDelta^0_2$ 
\end{theorem}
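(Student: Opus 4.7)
My plan is to build each toast layer $T_n$ from a single, small, nested half-open box $B_n \subseteq \T^k$ used as a marker set. The key point is that a half-open box $\prod_i[a_i, b_i)$ is simultaneously $F_\sigma$ and $G_\delta$, hence $\boldDelta^0_2$; and since $\boldDelta^0_2$ is a Boolean algebra, as long as the toast construction invokes only finite Boolean combinations of translates of $B_n$, each $T_n$ will inherit $\boldDelta^0_2$ complexity. This is the source of the improvement over \cite{MU} and \cite{MNP}, whose marker sets are closed (or are themselves countable unions of closed sets), giving at best $\boldSigma^0_3$ layers after the standard construction.

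Concretely, fix a rapidly growing sequence $R_n$ of scales, say $R_{n+1} \ge 10qR_n$. At each level $n$, pick a nested half-open box $B_n = \prod_{i=1}^k [0, \varepsilon_n^{(i)})$ with $B_{n+1} \subseteq B_n$. The $\varepsilon_n^{(i)}$ are chosen small enough that for every $x \in \T^k$ the orbit-hit set $\{\gamma \in \Z^d : \gamma \cdot x \in B_n\}$ is $R_n$-separated in the Cayley graph of $\Z^d$ (any two distinct hits differ by $\gamma$ with $\|\gamma\|_\infty \ge R_n$), yet large enough that the same set is also $R_n$-cobounded, so that every orbit hits $B_n$ within $R_n$ graph-steps. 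Both properties hold simultaneously for a generic choice of $x_1, \ldots, x_d$: separation reduces to a diophantine lower bound on $\|\sum_i \gamma_i x_i\|_{\T^k}$ for $0 < \|\gamma\|_\infty \le R_n$, while coboundedness follows from quantitative equidistribution of the orbit. Define the level-$n$ tile of $x$ as the graph-Voronoi cell, within the orbit of $x$, around the nearest marker in $B_n$, with a canonical lexicographic tiebreak; and let $T_n$ be the set of points within graph-distance $q$ of a distinct level-$n$ tile. The $q$-toast nesting axioms follow from $B_{n+1} \subseteq B_n$ (so markers are automatically nested) together with $R_{n+1} \ge 10qR_n$ (so each level-$n$ tile plus its $q$-neighborhood lies inside a single level-$(n+1)$ tile).

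The complexity verification is then transparent: membership of $x$ in $T_n$ is determined by the $B_n$-memberships of the finitely many translates $\{\gamma \cdot x : \|\gamma\|_\infty \le CR_n\}$ for a constant $C$ large enough to witness the entire tile of $x$ together with its $q$-boundary. Each statement ``$\gamma \cdot x \in B_n$'' asks whether $x$ lies in a fixed translate of $B_n$, which is $\boldDelta^0_2$; since $T_n$ is a finite Boolean combination of such statements, $T_n \in \boldDelta^0_2$.

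The main obstacle I foresee is the uniform $R_n$-separation: ensuring that \emph{every} orbit, not merely almost every, has well-separated hits with $B_n$. This amounts to a uniform lower bound $\|\sum_i \gamma_i x_i\|_{\T^k} \gg 0$ for $0 < \|\gamma\|_\infty \le R_n$, which holds almost surely in $(x_1, \ldots, x_d)$ and can be arranged for algebraic irrational $x_i$'s via Schmidt-type subspace bounds, compatibly with the effective version of the theorem noted in the introduction. A secondary subtlety is the tiebreak in the Voronoi-cell assignment, handled by a finite lexicographic comparison that preserves $\boldDelta^0_2$ complexity.
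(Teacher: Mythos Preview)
Your proposal has a genuine gap at the combinatorial core, not at the complexity bookkeeping.

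First, your definition of $T_n$ as ``points within graph-distance $q$ of a distinct level-$n$ tile'' does not satisfy the toast axioms of Definition~\ref{def:toast_new}. Voronoi cells tile each orbit, so adjacent cells meet; hence the $q$-thickened boundary network $T_n$ is connected throughout the orbit and $G^{\leq q}\res T_n$ has infinite components, violating (2). It also violates (1): as $n$ grows the tiles have diameter $\sim R_n$ while $T_n$ still has thickness $\sim q$, so almost every point lies outside all $T_n$. If instead you meant $T_n$ to be the ``cores'' (points at distance $>q$ from every other tile), (2) is fine but (1) and (3) hinge entirely on your nesting claim.

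That nesting claim is the real problem. You assert that $B_{n+1}\subseteq B_n$ together with $R_{n+1}\ge 10qR_n$ forces each level-$n$ Voronoi tile, with its $q$-neighborhood, to lie inside a single level-$(n+1)$ tile. This is false. When a level-$n$ marker $m$ does not survive to $B_{n+1}$, its Voronoi cell $V_n(m)$ is absorbed by the neighboring level-$(n+1)$ cells according to which surviving marker is closest; if several surviving markers are at comparable distances, $V_n(m)$ is split among them, and no growth rate on $R_n$ prevents this. Arranging that tiles at successive scales genuinely nest with a $q$-buffer is exactly the nontrivial content of the GJKS toast construction, and it is why the paper passes through rainbow toasts and bounded geometry decompositions (Sections~\ref{sec:bgd}--\ref{sec:toast-from-bgd}) rather than taking Voronoi cells directly. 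The ``main obstacle'' you identify --- uniform separation of hits --- is in fact the routine part.

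Your complexity observation is correct and does align with the paper's mechanism: half-open boxes are pseudo-clopen in the sense of Section~\ref{sec:prelims}, and local functions of such sets stay $\boldDelta^0_2$. But the paper needs the full BGD machinery to produce layers that are local functions of \emph{open} sets (Lemma~\ref{lem:pc_bgd}) precisely because a naive tiling does not nest.
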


Thus the majority of our paper is devoted to proving Theorem \ref{thm:intro_torus_toast}. Our notion of toast is slightly weaker than the one used in \cite{MNP}, so in Section \ref{sec:rounding} we will also explain how the rest of their proof still works for our toasts. 

Our toast construction follows closely the original toast construction from \cite{GJKS}, which achieved the same complexity on 0-dimensional spaces\footnote{The results in \cite{GJKS} are stated only for the free part of the Bernoulli shift $\Z^d \acts 2^{\Z^d}$, but we will see that the construction works in general.} .

\begin{theorem}[\cite{GJKS}]\label{thm:gjks_shift_toast}
    Let $\Z^d \acts X$ be a free continuous action of $\Z^d$ on a 0-dimensional Polish space with Schreier graph $G$. For any $q$, $G$ admits a $q$-toast with $\boldDelta^0_2$-layers. 
\end{theorem}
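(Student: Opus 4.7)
The plan is to construct the toast layers as Voronoi-style structures at exponentially growing scales, leveraging the abundance of clopen sets in a $0$-dimensional space.

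First, fix scales $r_0 < r_1 < \cdots$ with $r_{n+1} \geq 3 r_n + q$. At each level $n$ the goal is to produce a set $M_n \subseteq X$ that is $r_n$-separated in $G$ (distinct elements at $G$-distance $> r_n$) and such that every point of $X$ lies within $G$-distance $r_n$ of some element of $M_n$. Since $X$ is $0$-dimensional and the action is continuous, fix a countable clopen partition $\mathcal{P}_n$ whose parts are fine enough that the isomorphism type of the $r_n$-ball in the Schreier graph is constant on each part. Construct $M_n$ greedily by enumerating $\mathcal{P}_n$ and including each part iff no already-included part sits within $G$-distance $r_n$ of it. The resulting $M_n$ is a countable union of clopen parts, hence $F_\sigma$; its complement is the analogous union of the excluded parts, also $F_\sigma$, so $M_n \in \boldDelta^0_2$.

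Second, for each $n$ and $m \in M_n$, let $V_n(m)$ denote the Voronoi cell of $m$ at scale $n$: the set of $x \in X$ minimizing $\dist_G(x, m')$ over $m' \in M_n$ and selecting $m$, with ties broken by the enumeration of $\mathcal{P}_n$. The finitely many candidate markers near any $x$ lie inside a ball of radius $r_n$, so membership in $V_n(m)$ is determined by the $\boldDelta^0_2$ events of membership in $M_n$ for each candidate marker. Hence $V_n(m) \in \boldDelta^0_2$, and the collection $\{V_n(m) : m \in M_n\}$ partitions $X$ into pieces of $G$-diameter at most $2 r_n$.

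Third, enforce nesting between consecutive layers by absorption. Redefine the level-$(n+1)$ piece at $m \in M_{n+1}$ to be $V_{n+1}(m)$ together with every level-$n$ piece which meets $V_{n+1}(m)$. Because the level-$n$ pieces have $G$-diameter at most $2 r_n$, the absorption enlarges each cell by at most $2 r_n$, so the choice $r_{n+1} \geq 3 r_n + q$ leaves a buffer of at least $q$ between any two distinct level-$(n+1)$ pieces. The layer $T_n$ of the toast, however the paper's definition packages this data, is a countable disjoint union of such $\boldDelta^0_2$ pieces and is itself $\boldDelta^0_2$.

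The remaining toast axioms are immediate: finite components follow from the diameter bound on pieces; exhaustion follows from the covering property of each $M_n$; $q$-coherence between layers $n$ and $n+1$ is built into the absorption rule; and $q$-separation between same-level pieces follows from the $r_{n+1}$-separation of $M_{n+1}$ exceeding $q + 2 r_n$. The main obstacle I anticipate is verifying that the absorption at each level preserves $\boldDelta^0_2$ complexity uniformly across the countably many pieces; concretely, one must ensure the \emph{selection} of which level-$n$ pieces to absorb into each $V_{n+1}(m)$ is expressible as a single clopen-over-$\mathcal{P}_{n+1}$ condition. Arranging the partitions so that $\mathcal{P}_{n+1}$ refines $\mathcal{P}_n$ with enough padding to detect the finite local data of $M_n$ within an $r_{n+1}$-ball is where the real bookkeeping happens.
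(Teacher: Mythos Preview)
Your construction has a fundamental gap. Voronoi cells tile $X$ without gaps: adjacent cells meet in $G$. Hence if the layer $T_n$ is the union of all level-$n$ cells then $T_n = X$, and the connected components of $G^{\leq q} \res T_n$ are entire orbits, violating condition (2) of Definition~\ref{def:toast_new}. Your claim that ``$q$-separation between same-level pieces follows from the $r_{n+1}$-separation of $M_{n+1}$'' is false: separation of the markers says nothing about separation of their Voronoi cells, which by construction cover all of $X$, and the absorption step only enlarges pieces. What you have described is a coherent sequence of nested finite partitions, not a toast.

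To obtain an actual toast one must shrink the cells to create $q$-gaps, but then the exhaustion condition $\bigcup_n T_n = X$ is no longer automatic: a point lying persistently near cell boundaries may never be captured. Resolving this tension is exactly what makes toast constructions delicate, and a single hierarchy of marker sets does not suffice. The paper's route (following \cite{GJKS}) is structurally different: starting from clopen witnesses to asymptotic dimension for $\Z^d$ (Theorem~\ref{thm:clopen_asdim}), it builds a \emph{rainbow toast} and then a \emph{bounded geometry decomposition} $(X_n)$ with each $X_n$ clopen (Lemmas~\ref{lem:rainbow_toast} and \ref{lem:easy_BGD}), and finally applies Theorem~\ref{thm:bgd-toast} to extract a toast whose layers lie in $\boldDelta_\Gamma(\{X_n, X_n^\infty \mid n \in \N\})$. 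Since each $X_n$ is clopen and each $X_n^\infty$ is therefore open, this class is contained in $\boldDelta^0_2$. Crucially, the BGD-to-toast step involves no per-component choice; the alternative constructions that select a colour for each orbit (as in \cite{MU}, \cite{MNP}) are stuck at $\Bool(\boldSigma^0_2)$ layers for precisely that reason.
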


The complexity of the layers is not stated explicitly in \cite{GJKS}, but Steve Jackson indicated to us in personal communication that they should be $\boldDelta^0_2$, for which we are very grateful. In Section \ref{sec:toast-from-bgd} we will go over a part of this construction and verify this fact.


Like \cite{MNP} did with the construction in \cite{MU}, we will need to revisit the toast construction from \cite{GJKS} and introduce modifications which allow us to save a level on the torus. Our modification is based on the same topological properties of the torus as that in \cite{MNP} (see Definition \ref{def:pseudo0dim}) but the way we utilize them is quite different. Our toast construction also works in a more general situation than free actions of $\Z^d$, as is explained in the following Theorem. 

An important tool in our modification is the use of low-complexity witnesses to \textit{Borel asymptotic dimension}, a tool introduced by Conley, Jackson, Marks, Seward, and Tucker-Drob \cite{Borelasdim} which has since become central in the study of Borel group actions and descriptive combinatorics. This leads us to the following generalization of Theorem \ref{thm:gjks_shift_toast}.

\begin{theorem}\label{thm:intro_asdim_toast}
    Let $\Gamma$ be a finitely-generated group and $\Gamma \acts X$ an action with Schreier graph $G$. Suppose there exists $d \in \N$ and 
    a class of sets $\mathcal{A}$ such that $G$ admits witnesses for asymptotic dimension using sets in $\mathcal{A}$ at every scale. Then for every $q$, $G$ has a $q$-toast such that each layer and its complement are a countable union of Boolean combinations of countable unions of Boolean combinations of translates of sets in $\mathcal{A}$. 

    In particular, if the (free part of the) Bernoulli shift of $\Gamma$ admits clopen witnesses to asymptotic dimension, then any zero dimensional (free) $\Gamma$-space admits toasts with $\boldDelta^0_2$-layers, and any $\Gamma$-space admits toasts with $\boldDelta^0_3$-layers. 
\end{theorem}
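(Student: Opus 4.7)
My plan is to lift the construction of \cite{GJKS} behind Theorem~\ref{thm:gjks_shift_toast} to our more general setting by identifying the precise role that clopen-ness plays there and replacing it, where possible, with membership in the abstract class $\mathcal{A}$. The only place that zero-dimensionality enters the GJKS toast construction is in producing asymptotic dimension witnesses at arbitrary scales whose pieces are clopen; abstracting this is exactly the hypothesis of Theorem~\ref{thm:intro_asdim_toast}. So the bulk of the argument is a verification that the original construction carries through, with the complexity bookkeeping done carefully.

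Concretely, I would fix rapidly growing scales $R_0 < R_1 < \cdots$, each much larger than $q$ times the diameter bound coming from the witness at the previous scale, and build a tiling $\mathcal{T}_n$ at level $n$ inductively in the colors. Starting with the color-$1$ pieces of the scale-$R_n$ witness $\mathcal{P}_{R_n}$ as tiles, for each subsequent color $i=2,\ldots,d+1$ I would merge any color-$i$ piece into the unique existing tile within $G$-distance $q$ (uniqueness follows from $R_n$-separation as soon as $R_n$ dominates the previous diameter bound plus $q$). To get nesting across levels I would further absorb any tile of $\mathcal{T}_{n-1}$ lying within distance $q$ of a tile of $\mathcal{T}_n$ into the latter, and let $T_n$ be the corresponding layer. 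Because every merging decision depends only on local information---whether certain neighbors of $x$ (i.e., translates of $x$ by bounded elements of $\Gamma$) lie in the relevant witness pieces---each tile is a Boolean combination of translates of sets in $\mathcal{A}$, and $T_n$ becomes a countable union of such tiles; the cross-level merging supplies the outer ``Boolean combination of countable unions'' layer.

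For the ``in particular'' corollaries I would simply count levels. In a zero-dimensional Polish space, translates and Boolean combinations preserve the clopen class, countable unions of clopens are exactly the open sets, Boolean combinations of open sets land in $\boldDelta^0_2$, and one more countable union gives $\boldSigma^0_2$. Since both the layer and its complement are $\boldSigma^0_2$, the layer is $\boldDelta^0_2$. For a general Polish $\Gamma$-space I would first pull the Bernoulli-shift witnesses back along a Borel equivariant map $X \to 2^\Gamma$, producing asdim witnesses in $\boldDelta^0_2$; the same count, shifted up by one level, then yields $\boldDelta^0_3$ layers.

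The main obstacle I anticipate is the complexity bookkeeping in the merging step: ensuring that the inductive construction never leaves the class of countable unions of Boolean combinations of translates of sets in $\mathcal{A}$. In \cite{GJKS} this is almost automatic because the clopen sets form a very rigid $\sigma$-algebra. For an abstract $\mathcal{A}$ one has to be careful that the ``within distance $q$ of an earlier tile'' tests, and especially the cross-scale nesting step, are expressible using only finitely many translates and Boolean operations at each stage. Choosing the scales $R_n$ to grow fast enough to decouple the local tests at different levels---so that no genuinely infinitary comparison is ever needed within a single tile---is what makes the bookkeeping go through, and this is where I expect the bulk of the technical care to be required.
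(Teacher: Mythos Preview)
Your sketch describes a direct ``merge across colors, then merge across levels'' construction, but this is \emph{not} the construction in \cite{GJKS}, and the complexity bookkeeping you worry about at the end is indeed where it breaks. The cross-level absorption step (``absorb any tile of $\mathcal{T}_{n-1}$ lying within distance $q$ of a tile of $\mathcal{T}_n$'') makes the layer $T_n$ a local function of $T_{n-1}$ together with the scale-$R_n$ witness. Unwinding, $T_n$ becomes a Boolean combination of translates of $T_{n-1}$ and sets in $\mathcal{A}$; since $T_{n-1}$ already sits one $\boldSigma$-level above $\Bool_\Gamma(\mathcal{A})$, this is an inductive dependency that drives the complexity of $T_n$ up with $n$. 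No choice of fast-growing scales decouples this: the nesting condition (3) in Definition~\ref{def:toast_new} is genuinely a cross-level condition, and you have not explained how to enforce it without referring to earlier layers. (There is also a more local issue: a color-$i$ piece need not be within distance $q$ of any existing tile, and when it is there is no uniqueness, so the within-level merging is under-specified.)

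The paper's route avoids this trap by going through an intermediary object, a \emph{bounded geometry decomposition} $(X_n)_n$, whose layers are built \emph{independently} of one another: each $X_n$ is a local function of a single asymptotic dimension witness (via a ``rainbow toast'', Lemma~\ref{lem:rainbow_toast} and Lemma~\ref{lem:easy_BGD}), so each $X_n \in \Bool_\Gamma(\mathcal{A})$ uniformly in $n$. The toast is then built from the BGD following \cite{GJKS}: one looks at components $R$ of $G \res (X \setminus X_n^\infty)$, assigns each a \emph{stage} and \emph{amplitude}, shrinks $R$ to an interior $\Int(R)$, and lets $T_n$ be the union of $\Int(R)$ over \emph{maximal} $R$ of stage $n$ (Theorem~\ref{thm:bgd-toast}). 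The point is that maximality and $\Int$ are expressible as local functions of the $X_m$'s and their tails $X_m^\infty$, and the tails---being countable unions of sets in $\Bool_\Gamma(\mathcal{A})$---are exactly where the inner ``countable union of Boolean combinations'' in the stated complexity arises. The nesting condition for the toast then falls out of Lemma~\ref{lem:int}, with no reference to previously constructed $T_m$'s. This is the idea you are missing, and it is the content of Sections~\ref{sec:bgd} and~\ref{sec:toast-from-bgd}.

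For the ``in particular'' clause, your level-counting is essentially right once the main statement is in hand, though the pullback along an equivariant map to the shift is not the mechanism used: the paper instead appeals to the equivalence between clopen solutions on the shift and local algorithms, and produces the required proper colorings directly on $X$ (Section~\ref{sec:asdim}).
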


Since toast has played a large role recently in many parts of descriptive combinatorics beyond just equidecomposition results, Theorem \ref{thm:intro_asdim_toast} may be of independent interest. This is helped by the fact that the hypothesis about clopen witnesses to asymptotic dimension on the free part of the Bernoulli shift has been shown to hold for a wide variety of countable amenable groups, at least in the free case \cite[Theorem 10.7]{Borelasdim}. 

There is a different construction of toast from witnesses to asymptotic dimension in \cite{Borelasdim} (although it is not labeled explicitly as such) which produces layers that are Boolean combinations of $\boldSigma^0_{\alpha+1}$ sets. 
See the discussion preceding Lemma \ref{lem:rainbow_toast} for an explanation of this. 

Theorem \ref{thm:intro_asdim_toast}
does not include Theorem \ref{thm:intro_torus_toast} as a special case since $\T^k$ is not zero dimensional. There is a corresponding generalization of Theorem \ref{thm:intro_torus_toast} in terms of witnesses to asymptotic dimension, but it requires technical and ad-hoc assumptions on the space $X$. See Theorem \ref{thm:p0d_toast}. 

Finally we mention that we do not know whether the complexity of the layers in Theorem \ref{thm:intro_asdim_toast} is optimal.

\begin{question}\label{q:toast_layers}
    Let $G$ be the Schreier graph of a free continuous action of $\Z^d$ on a 0-dimensional Polish space. 
    Does $G$
    admit a 1-toast with each layer closed? 
\end{question}

Open layers can be ruled out by an obvious compactness argument. Since most applications of toast in Borel combinatorics involve taking Boolean combinations, a positive answer to this question would not necessarily yield any complexity improvements in terms of the Borel Hierarchy. 
On the other hand, it would in some cases yield improvements as measured by the finer Hierarchies mentioned above.
However, even a toast with closed layers on the torus would not yield a positive answer to Question \ref{q:boolean_open} without improvements to other parts of the proof of circle squaring. 

\subsection{Outline of the paper}
In \Cref{sec:prelims}, we give an overview of preliminary results about Schreier graphs, complexity classes of sets, and flows. This ends with an outline of the proof of Theorem \ref{thm:equi_torus} in \Cref{subsec:toast_flows}. 
The next three sections add up to a proof of Theorems \ref{thm:intro_torus_toast} and \ref{thm:intro_asdim_toast}.
In \Cref{sec:asdim}, we cover the existence of low-complexity witnesses to asymptotic dimension.
\Cref{sec:bgd} uses these asymptotic dimension witnesses to construct an intermediate object, a \emph{bounded geometry decomposition (BGD)} (based on a similar definition from \cite{GJKS}) and in
\Cref{sec:toast-from-bgd} we follow the proof from \cite{GJKS} that a BGD can be used to construct a toast.
Finally, in \Cref{sec:rounding}, we explain the key step from \cite{MNP} used to obtain low complexity equidecompositions from a low complexity toast. 

\section{Preliminaries}\label{sec:prelims}

\subsection{The Schreier graph}

We fix the following parameters for the remainder of the paper.
Let $\Gamma$ be a finitely generated group with symmetric finite generating set $S$ and $X$ a set on which $\Gamma$ acts. 
Often we will assume this action is free. $X$ will often carry a topology $\tau$, in which case we will usually assume the action is continuous. 
In this case $X$ is called a \emph{$\Gamma$-space}.
The \textit{Schreier graph} of this action with respect to $S$ is the graph with vertex set $X$ and edges $(x, \gamma \cdot x)$ for $\gamma \in S$, $x \in X$, and will always be denoted $G$.
The connected components of the graph $G$ are the orbits of the action $\Gamma \acts X$.
We will use the following standard graph-theoretic notation:
\begin{itemize}
    \item The \emph{graph distance} $\dist(x,y)$ is the length of the shortest path in $G$ between $x$ and $y$.
    This is an extended metric on $X$.
    \item Given $R \subseteq X$, the \emph{neighbors} of $R$ are $N(R) = \{y \mid \dist(R,y) =1\}$.
    \item Given $R \subseteq X$ and $q \in \N$, the \emph{closed $q$-neighborhood of $R$} is $B(R,q) = \{y \mid \dist(x,y) \leq q \text{ for some }x \in R\}$.
    \item Given $r \in \N$, the \emph{graph power} $G^{\leq r}$ is the graph with vertex set $X$ and $(x,y) \in E(G^{\leq q})$ if $\dist(x,y) \leq r$. Equivalently, $G^{\leq r}$ is the Schreier graph with respect to the generating set $S^{\leq r}$ consisting of products of $\leq r$ elements of $S$. 
    \item Given $R \subseteq X$, the \emph{exterior boundary} of $R$ is the set $\partial_oR = (X \setminus R) \cap B(R,1)$, that is, vertices in $X \setminus R$ that are adjacent to $R$.
    Similarly, the \emph{interior boundary} is the set $\partial_i R = R \cap B(X\setminus R,1)$.
    \item The \emph{edge boundary} of $R \subseteq X$ is the set $\partial_E R = \{(x,y) \in E(G) \mid x \in R, y \not\in R \} = \{ (x,y) \in E(G)\mid x \in \partial_i(R), y \in \partial_o(R)\}$.
    \item Given $Y \subseteq X$, $G \res Y$ is the induced subgraph on the vertex set $Y$.
\end{itemize}

\subsection{Some complexity classes of sets}

\begin{definition}
    Let $\mathcal{A} \subseteq \mathcal{P}(X)$ be a collection of subsets of $X$.
    We define the following collections of sets.
    \begin{itemize}
        \item $\Bool(\mathcal{A})$ is the collection of all finite Boolean combinations of sets in $\mathcal{A}$.
        \item For $T \subseteq \Gamma$, $T \cdot \mathcal{A} = \{\gamma A \mid \gamma \in T, A \in \mathcal{A}\}$ is the set of translates of sets in $\mathcal{A}$ by elements of $T$. 
        \item $\Bool_{\Gamma}(\mathcal{A}) = \Bool(\Gamma \cdot \mathcal{A})$. 

        \item $\boldSigma_\Gamma(\mathcal{A})$ is the collection of countable unions of sets in $\Bool_\Gamma(\mathcal{A})$. 
        \item $\boldDelta_\Gamma(\mathcal{A}) = \{A \in \boldSigma_\Gamma(\mathcal{A}) \mid X \setminus A \in \boldSigma_\Gamma(\mathcal{A})\}$.
    \end{itemize}

\end{definition}

Observe that every class above with a $\Gamma$ subscript is translation-invariant, and that all of the classes above are monotone in $\mathcal{A}$. 

If $A \in \Bool_\Gamma(\mathcal{A})$ we sometimes say that $A$ is a \emph{local function} of sets in $\mathcal{A}$. 
Since our boolean combinations are finite, in this case there is some $r \in \N$ such that $A \in \Bool(S^{\leq r} \cdot \mathcal{A})$, and we say that $A$ is an \emph{$r$-local} function of sets in $\mathcal{A}$.
Equivalently, for $x \in X$ the predicate $x \in A$ only depends on the intersections of finitely many sets in $\mathcal{A}$ with $B(x,r)$.
This is often a more useful way to check if a given set $A$ is in $\Bool_{\Gamma}(\mathcal{A})$.

In what follows, we will often need to deal with not just sets of vertices, but sets of edges, as well as functions on both. 
If $F \subseteq E(G)$ is a set of (directed) edges, we will consider $F$ to be coded by the sequence of sets of vertices $\{x \mid (x, \gamma x) \in F\}$ for $\gamma \in S$. 
If $f : X \to \N$, we will consider $f$ to be coded by the sequence of fibers $f\inv(n)$ for $n \in \N$. Combining these lets us code functions on $E(G)$ similarly. 
We use these encodings to extend the notions above to these other objects. 
For example, given $f : X \to \N$, we will call any set in $\Bool_\Gamma(\{f\inv(n) \mid n \in \N\})$ a \emph{local function of $f$}. 

These definitions will be used to abstractly discuss the combinatorial properties related to the first few levels of the Borel hierarchy without need for a topology. 
For instance, the conclusion of Theorem \ref{thm:equi_torus} says that the pieces in our equidecomposition can be in $\boldDelta_\Gamma(\{A,B\} \cup \boldSigma^0_1)$. 
If $X$ is a $\Gamma$-space, every class in the Borel hierarchy is translation invariant. Thus it is clear, for example, that for any $\alpha < \omega_1$, $\boldSigma_\Gamma(\boldDelta^0_\alpha) = \boldSigma^0_{\alpha}$ and $\boldDelta_\Gamma(\boldDelta^0_\alpha) = \boldDelta^0_\alpha$. The next proposition shows that $\boldDelta_\Gamma(\mathcal{A})$ behaves well even outside of this topological setting.

\begin{proposition}\label{prop:Delta_algebra}
    For any $\mathcal{A} \subseteq \mathcal{P}(X)$, $\boldDelta_\Gamma(\mathcal{A})$ is an algebra that is invariant under the group action.
    Furthermore, $\boldDelta_\Gamma$ is idempotent: $\boldDelta_\Gamma( \boldDelta_\Gamma(\mathcal{A})) = \boldDelta_\Gamma(\mathcal{A})$.
\end{proposition}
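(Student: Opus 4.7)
The plan is to work through the three claims in order: translation-invariance, the algebra property, and then idempotence, with each step feeding into the next.

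First, I would observe that $\Bool_\Gamma(\mathcal{A}) = \Bool(\Gamma \cdot \mathcal{A})$ is automatically $\Gamma$-invariant since $\Gamma \cdot \mathcal{A}$ is. Translation commutes with countable unions and with complementation, so $\boldSigma_\Gamma(\mathcal{A})$ and $\boldDelta_\Gamma(\mathcal{A})$ inherit invariance. This disposes of the invariance statement and will be needed at the end.

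Next, for the algebra property, I would first show that $\boldSigma_\Gamma(\mathcal{A})$ is a lattice, i.e., closed under finite unions and finite intersections. Finite unions are trivial: $\bigcup_n A_n \cup \bigcup_n B_n = \bigcup_n (A_n \cup B_n)$ and $A_n \cup B_n \in \Bool_\Gamma(\mathcal{A})$. Finite intersections distribute: $\bigcup_n A_n \cap \bigcup_m B_m = \bigcup_{n,m}(A_n \cap B_m)$, again a countable union of $\Bool_\Gamma(\mathcal{A})$ sets. From this it follows by De Morgan that $\boldDelta_\Gamma(\mathcal{A})$ is closed under finite unions, intersections, and trivially under complements (by the symmetric definition), so it is an algebra.

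For idempotence, the inclusion $\boldDelta_\Gamma(\mathcal{A}) \subseteq \boldDelta_\Gamma(\boldDelta_\Gamma(\mathcal{A}))$ is immediate from monotonicity of $\boldDelta_\Gamma$ in its argument. For the reverse inclusion, let $C \in \boldDelta_\Gamma(\boldDelta_\Gamma(\mathcal{A}))$, so both $C$ and $X \setminus C$ lie in $\boldSigma_\Gamma(\boldDelta_\Gamma(\mathcal{A}))$; write $C = \bigcup_n C_n$ with each $C_n \in \Bool(\Gamma \cdot \boldDelta_\Gamma(\mathcal{A}))$. Since $\boldDelta_\Gamma(\mathcal{A})$ is $\Gamma$-invariant (step one), we have $\Gamma \cdot \boldDelta_\Gamma(\mathcal{A}) = \boldDelta_\Gamma(\mathcal{A})$, and since $\boldDelta_\Gamma(\mathcal{A})$ is an algebra (step two), $\Bool(\boldDelta_\Gamma(\mathcal{A})) = \boldDelta_\Gamma(\mathcal{A})$. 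Hence each $C_n \in \boldDelta_\Gamma(\mathcal{A}) \subseteq \boldSigma_\Gamma(\mathcal{A})$, so $C = \bigcup_n C_n$ is a countable union of countable unions of $\Bool_\Gamma(\mathcal{A})$-sets, which is itself in $\boldSigma_\Gamma(\mathcal{A})$. Running the same argument on $X \setminus C$ gives $X \setminus C \in \boldSigma_\Gamma(\mathcal{A})$, so $C \in \boldDelta_\Gamma(\mathcal{A})$.

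There is no real obstacle here; the whole proof is a bookkeeping exercise in unwinding definitions. The only subtlety worth flagging is that the two preliminary steps (invariance and the algebra property) must be established \emph{before} the idempotence argument, since the reduction $\Bool(\Gamma \cdot \boldDelta_\Gamma(\mathcal{A})) = \boldDelta_\Gamma(\mathcal{A})$ relies on both.
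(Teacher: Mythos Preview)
Your proof is correct and follows essentially the same route as the paper: both establish invariance first, then prove the algebra property by showing that $\boldSigma_\Gamma(\mathcal{A})$ is closed under finite intersections via the distributive identity $\bigl(\bigcup_n A_n\bigr) \cap \bigl(\bigcup_m B_m\bigr) = \bigcup_{n,m}(A_n \cap B_m)$, and finally deduce idempotence from $\Bool_\Gamma(\boldDelta_\Gamma(\mathcal{A})) = \boldDelta_\Gamma(\mathcal{A})$. The only difference is packaging: you phrase the key step as ``$\boldSigma_\Gamma(\mathcal{A})$ is a lattice,'' while the paper computes $(A\cup B)^c$ directly, but the content is identical.
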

\begin{proof}
  Invariance is clear and was noted above. 
  To show that $\boldDelta_\Gamma(\mathcal{A})$ is an algebra, we need to show it is closed under complements and finite unions.
  Note that $\boldDelta_{\Gamma}(\mathcal{A})$ is closed under complements by definition.
  So, let $A,B \in \boldDelta_{\Gamma}(\mathcal{A})$.
  Since $A, B \in \boldSigma_{\Gamma}(\mathcal{A})$, which is closed under countable unions by definition, $A \cup B \in \boldSigma_{\Gamma}(\mathcal{A})$.
  We need to show that $(A \cup B)^c \in \boldSigma_{\Gamma}(\mathcal{A})$.\footnote{When the set $X$ is clear from context, let $S^c$ denote the complement $X \setminus S$.}
  We know that $A^c, B^c \in \boldSigma_{\Gamma}(\mathcal{A})$, so we can also write $A$ and $B$ as countable \textit{intersections} of sets in $\Bool_{\Gamma}(A)$: $A = \bigcap_{i \in \N}A_i$ and $B = \bigcap_{i \in \N}B_i$.
  So,
  \begin{align*}
     (A \cup B)^c = A^c \cap B^c =  \left( \bigcup_{i \in \N} A_i^c \right) \cap \left( \bigcup_{i \in \N} B_i^c \right) = \bigcup_{i,j \in \N} \left(A_i^c \cap B_j^c\right) \in \boldSigma_{\Gamma}(\mathcal{A}),
  \end{align*}
  which shows that $A \cup B \in \boldDelta_{\Gamma}(\mathcal{A})$.

  Finally, we will show that $\boldSigma_\Gamma(\boldDelta_\Gamma(\mathcal{A})) = \boldSigma_\Gamma(\mathcal{A})$, which implies that $\boldDelta_\Gamma$ is idempotent.
  By the previous paragraph, $\Bool_\Gamma(\boldDelta_\Gamma(\mathcal{A})) = \boldDelta_\Gamma(\mathcal{A})$.
  Thus, if $X \in \boldSigma_\Gamma(\boldDelta_\Gamma(\mathcal{A}))$, then $X = \cup_{i \in \N}X_i$ for some sets $X_i \in \boldDelta_\Gamma(\mathcal{A}) \subseteq \boldSigma_\Gamma(\mathcal{A})$.
  Since $\boldSigma_\Gamma(\mathcal{A})$ is closed under countable unions, $X \in \boldSigma_\Gamma(\mathcal{A})$.
\end{proof}

We now turn to a topological notion enjoyed by the torus which will be key to our proof of Theorem \ref{thm:intro_torus_toast} and which was also an important part of the toast construction in \cite{MNP}. 

\begin{definition}\label{def:pseudo0dim}
    Let $(X, \tau)$ be a $\Gamma$-space.
    \begin{itemize}
        \item We say $A \subseteq X$ is \emph{pseudo-clopen (with respect to $\Gamma$)} if there is some $N \in \N$ so that $\partial_{\tau}A$, the topological boundary of $A$, intersects each $\Gamma$-orbit in at most $N$ points.
        \item We say $(X ,\tau)$ is \emph{pseudo-0-dimensional (with respect to $\Gamma$)} if it has a basis of pseudo-clopen sets.
    \end{itemize}
\end{definition}

We emphasize that, unlike clopenness and 0-dimensionality, pseudeo-clopenness and pseudo-0-dimensionality are properties of $X$ as a $\Gamma$-space, not as a topological space itself.
Of course, clopen sets are pseudo-clopen with respect to any group action as witnessed by $N = 0$, and so 0-dimensional spaces are always pseudo-0-dimensional.

As we have stated, in our main application $\Z^d \acts \T^k$ will be an action by independently randomly chosen translations $x_1,\ldots,x_d \in \T^k$. With probability 1, the coordinates of the $x_i$'s together with 1 will be linearly independent over $\mathbb{Q}$, which makes it clear that any axis-parallel rectangle is pseudo-clopen. Thus $\T^k$ is almost surely pseudo-0-dimensional with respect to this action. In fact, it is pseudo-0-dimensional with respect to any countable group of translations by taking rectangles with an angle independent from the translations. 


\begin{lemma}\label{lem:pc_algebra}
    The pseudo-clopen sets are an algebra that is invariant under the action of $\Gamma$. 
    That is, if $\mathcal{P}$ denotes the collection of all pseudo-clopen sets, $\Bool_{\Gamma}(\mathcal{P}) = \mathcal{P}$.
\end{lemma}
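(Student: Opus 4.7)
The plan is to show both inclusions of $\text{Bool}_\Gamma(\mathcal{P}) = \mathcal{P}$. The inclusion $\mathcal{P} \subseteq \Bool_\Gamma(\mathcal{P})$ is immediate since any set is a trivial Boolean combination of itself, so the content is in showing $\mathcal{P}$ is closed under translations by $\Gamma$ and under the finitary Boolean operations. For each operation, I would witness pseudo-clopenness by exhibiting a concrete bound $N$ in terms of the bounds for the input sets, using only the standard behavior of topological boundaries.

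First I would handle $\Gamma$-invariance. Because $X$ is a $\Gamma$-space, each $\gamma \in \Gamma$ acts as a homeomorphism of $X$, so $\partial_\tau(\gamma A) = \gamma \cdot \partial_\tau A$. Since $\gamma$ permutes $\Gamma$-orbits and is injective, if $\partial_\tau A$ meets each orbit in at most $N$ points, then so does $\partial_\tau(\gamma A)$. Next I would handle complements: $\partial_\tau(X \setminus A) = \partial_\tau A$, so pseudo-clopenness with the same constant $N$ is preserved. Finally, for finite unions (hence, via complements, intersections and symmetric differences), I would use the standard inclusion $\partial_\tau(A \cup B) \subseteq \partial_\tau A \cup \partial_\tau B$; if $A$ and $B$ are pseudo-clopen with constants $N_A$ and $N_B$, then $\partial_\tau(A \cup B)$ meets each orbit in at most $N_A + N_B$ points.

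Combining these closure properties, any finite Boolean combination of translates $\gamma_1 A_1, \dots, \gamma_n A_n$ of pseudo-clopen sets $A_i$ (with constants $N_i$) is pseudo-clopen, with an explicit bound $N_1 + \cdots + N_n$. This proves $\Bool_\Gamma(\mathcal{P}) \subseteq \mathcal{P}$ and completes the equality.

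I do not expect a real obstacle here; the argument is bookkeeping once one notes the two facts $\partial_\tau(\gamma A) = \gamma \cdot \partial_\tau A$ (requiring that $\Gamma$ acts by homeomorphisms, which is part of the definition of a $\Gamma$-space) and $\partial_\tau(A \cup B) \subseteq \partial_\tau A \cup \partial_\tau B$. The only mildly subtle point worth flagging is that one must check both that the boundary constant is finite \emph{per orbit} rather than globally — which is exactly why uniform bounds $N_A, N_B$ add rather than requiring some more delicate argument.
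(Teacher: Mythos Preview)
Your proposal is correct and follows essentially the same approach as the paper: verify $\Gamma$-invariance via $\partial_\tau(\gamma A) = \gamma \cdot \partial_\tau A$, closure under complements via $\partial_\tau(A^c) = \partial_\tau A$, and closure under finite unions via $\partial_\tau(A \cup B) \subseteq \partial_\tau A \cup \partial_\tau B$. Your version is slightly more explicit in tracking the constants $N_A + N_B$, but the argument is identical.
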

\begin{proof}
    Since $\Gamma$ acts by homeomorphisms, we have $\partial_{\tau}(\gamma A) = \gamma (\partial_{\tau}(A))$ for any $A \subseteq X$ and $\gamma \in \Gamma$, which makes invariance clear. Since $\partial_{\tau}(A^c) = \partial_\tau A$ for any $A$ we have closure under complements. Finally, for closure under finite unions, we use the fact that $\partial_{\tau}(A \cup B) \subseteq \partial_\tau A \cup \partial_\tau B$ for any $A,B$. 
\end{proof}

We can now state the promised generalization of Theorem \ref{thm:intro_torus_toast}. It will be proved in Section \ref{sec:toast-from-bgd}. Note the additional compactness assumption. 

\begin{theorem}\label{thm:p0d_toast}
    Let $\Gamma$ be a countable finitely generated group such that the Bernoulli shift of $\Gamma$ admits clopen witnesses to asymptotic dimension.
    Then the Schreier graph of any pseudo-0-dimensional compact $\Gamma$-space admits $q$-toasts with $\boldDelta^0_2$ layers for every $q \in \N$. 
\end{theorem}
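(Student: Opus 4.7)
The plan is to assemble the three modular ingredients developed earlier in the paper (asymptotic-dimension witnesses in \Cref{sec:asdim}, bounded geometry decompositions in \Cref{sec:bgd}, and toasts from BGDs in \Cref{sec:toast-from-bgd}) in exactly the same way that Theorem \ref{thm:intro_asdim_toast} is proved, but with ``clopen'' replaced by ``pseudo-clopen'' at every step. The fact that this replacement preserves the structural arguments rests on Lemma \ref{lem:pc_algebra}: pseudo-clopen sets form a translation-invariant algebra, so any finite Boolean/translation manipulation of the asdim witnesses keeps us inside this class.

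First I would transfer the hypothesis on the Bernoulli shift to the Schreier graph $G$ of $X$. The existing proof in \cite{Borelasdim} that $\Gamma$ has clopen asdim witnesses on $2^\Gamma$ uses only the cylinder-set structure and finite Boolean combinations; on a pseudo-0-dimensional $X$ the same combinatorial recipe can be executed using pseudo-clopen basis elements in place of cylinder sets, yielding, for each scale $r$, a witness to asdim for $G$ whose pieces are pseudo-clopen. Compactness of $X$ is used here to pass from a pointwise basis condition to uniform bounds. Next I would run the BGD construction of \Cref{sec:bgd} on these witnesses; since that construction is purely combinatorial and uses only finite Boolean operations and translations of the input sets, the resulting BGD pieces are local functions of pseudo-clopen sets, hence pseudo-clopen.

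With the BGD in hand I would invoke the construction of \Cref{sec:toast-from-bgd} (modeled on \cite{GJKS}) to produce, for each $q$, a $q$-toast whose layers $T_n$ are expressible as countable unions of local functions of BGD pieces, and whose complements admit the dual presentation. In the zero-dimensional setting of Theorem \ref{thm:gjks_shift_toast} this already yields $\boldDelta^0_2$ layers because each piece is clopen and countable unions of clopen sets are $F_\sigma$; in our setting the analogous conclusion is that each layer is a countable union of pseudo-clopen sets, and simultaneously a countable intersection of such sets.

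The hard part is the final complexity check: that a countable union of pseudo-clopen sets in a compact pseudo-0-dimensional $\Gamma$-space is actually $\boldDelta^0_2$. The obstruction is that a pseudo-clopen set $A$ need not itself be $F_\sigma$ on the nose: $A = \Int(A) \cup (A \cap \partial_\tau A)$, where $\Int(A)$ is open (hence $F_\sigma$ in a compact metric space) but $A \cap \partial_\tau A$ is an arbitrary subset of a closed set that can hit uncountably many orbits. The resolution I would pursue, following the strategy used in \cite{MNP} for their torus toast, is to use the specific form of the sets produced by the construction: the boundary contributions are controlled at each stage of the toast by the pseudo-clopen parameter $N$, and freeness together with compactness lets one replace each pseudo-clopen piece by its interior plus a countable set of exceptional orbit-representatives, producing an $F_\sigma$ presentation, and dually a $G_\delta$ presentation, of each layer. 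This bookkeeping step is where I expect the compactness hypothesis to be genuinely needed and is the main technical obstacle in the proof.
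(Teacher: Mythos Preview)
Your overall architecture (asdim witnesses $\to$ rainbow toast $\to$ BGD $\to$ toast) matches the paper, and your use of Lemma~\ref{lem:pc_algebra} to keep the asdim witnesses and rainbow toast pseudo-clopen is exactly right. But there is a genuine gap in your final step.

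You propose to keep the BGD sets $X_n$ pseudo-clopen and then argue that the toast layers, which by Theorem~\ref{thm:bgd-toast} lie in $\boldDelta_\Gamma(\{X_m, X_m^\infty\})$, are $\boldDelta^0_2$. For this you need each $X_m^\infty$ to be (at worst) $\boldSigma^0_1$, i.e.\ you need countable unions of pseudo-clopen sets to be open, or at least $\boldDelta^0_2$ in some salvageable way. As you yourself note, a pseudo-clopen set $A$ need not even be Borel: $A\cap\partial_\tau A$ is an arbitrary subset of a closed set that meets uncountably many orbits (think of any subset of a side of a rectangle in $\T^2$). Your proposed fix, replacing $A$ by its interior ``plus a countable set of exceptional orbit-representatives'', cannot work: the exceptional set is one of bounded cardinality \emph{per orbit}, not countable overall, so there is nothing to enumerate.

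The paper's resolution is different and cleaner, and happens one step earlier. In Lemma~\ref{lem:pc_bgd} one takes interiors \emph{at the BGD stage}: with $Y_n = \bigcup_i \partial_o U^i_{r_n}$ pseudo-clopen as in Lemma~\ref{lem:easy_BGD}, set $X_n := \mathrm{int}(Y_n)$. Then each $X_n$ is genuinely open, so $X_n^\infty$ is open and Theorem~\ref{thm:bgd-toast} gives layers in $\boldDelta_\Gamma(\boldSigma^0_1)=\boldDelta^0_2$ immediately. The only thing to check is that $(X_n)$ is still a BGD, and this is precisely where pseudo-clopenness is used: since $Y_n\setminus X_n\subseteq\partial_\tau Y_n$ meets each orbit in at most $N$ points, any $G$-path in $X\setminus X_n$ is a $G$-path in $X\setminus Y_n$ except for at most $N$ vertices, so the diameter bound $D_n'$ from Claim~\ref{c:Y_n_bdd} degrades only to $(N+1)(D_n'+2)$. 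Conditions (1) and (3) are inherited from $X_n\subseteq Y_n$. In short, pseudo-clopenness is not used to make the sets themselves simple, but to guarantee that passing to interiors does not destroy the combinatorics.
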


\subsection{Toast, flows, and equidecompositions}\label{subsec:toast_flows}


In this section we give an overview of the proof of the main theorem on low-complexity equidecompositions from\ \cite{MNP}, and highlight the parts we need to change or improve for our main theorem. 

The first ingredient is the following definition, already discussed in Section \ref{subsec:intro_toast}, which has played a large role in the study of Borel actions of $\Z^d$ since its introduction in \cite{GJKS}. 

\begin{definition}[\cite{GJKS}]\label{def:toast_new}
    A $q$-\textit{toast} is a sequence $( T_n )_{n \in \N}$ of subsets of $X$ (called \textit{layers}) such that
    \begin{enumerate}
        \item $\bigcup_n T_n = X$. 
        \item For each $n$, the components of $G^{\leq q} \res T_n$ (called \textit{pieces}) have uniformly bounded diameter. 
        \item For each $n < m$, $\dist(T_n, \partial_i T_m) \geq q$. 
    \end{enumerate}
\end{definition}

Definitions of toast in the literature are unfortunately quite varied. Often (e.g. \cite{GR_grids}) one gives each layer $\mathcal{T}_n$ as a collection of pieces $\mathcal{T}_n \subseteq [X]^{<\infty}$, with different pieces in the same layer separated by distance $q$ and pieces $C,D$ in layers $n < m$ respectively satisfying $B(C,q) \subseteq D$ or $B(C,q) \cap D = \emptyset$. It is easy to check that these definitions are more or less equivalent upon taking $T_n = \bigcup \mathcal{T}_n$ or $\mathcal{T}_n =$ the set of components of ${G^{\leq q}} \res T_n$. Many minor variations on this version of the definition exist. For instance, in \cite[Definition 4.11]{GJKS}, there is no separation required between distinct pieces in the same layer, though they must be disjoint. 

The definition of toast in \cite[Definition 2.14]{MNP} differs from the ones discussed above in that it requires the pieces to be connected in the original graph $G$.
Our altered toast construction does not seem to produce pieces with this property, but fortunately this turns out not to be necessary for the argument to go through. 


We now turn to \textit{flows}.


\begin{definition}
    A \textit{flow} on $G$ is a function $\phi : E(G) \to \R$ satisfying $\phi(x,y) = -\phi(y,x)$ for all $(x,y) \in E(G)$. 
    A flow is called \textit{integral} if it takes values in $\Z$. 
    For a flow $\phi$ on $G$, $\phi^{out} : X \to \R$ denotes the function $x \mapsto \sum_{y \in N(x)} \phi(x,y)$. 
    For $S \subseteq X$ finite we also adopt the notation $\phi^{out}(S) = \sum_{x \in S} \phi^{out}(S) = \sum_{(x,y) \in \partial_ES} \phi(x,y)$.
    For $f : X \to \R$, $\phi$ is called an \textit{$f$-flow in $G$} if
$\phi^{out} = f$. A 0-flow is also called a \textit{circulation}. 
\end{definition}

When $G$ is the Schreier graph of an action of translations on the torus and $f
= \Char_A - \Char_B$, an $f$-flow in $G$ should be viewed as a sort of linearized
equidecomposition from $A$ to $B$.  This intuition is confirmed by Lemma
\ref{lem:flow-to-decomp} below.  The existence of an $f$-flow in $G$ is a
consequence of \emph{discrepancy estimates} for the sets $A$ and $B$ in the
action generating $G$.

\begin{definition}  For a finite $F \subseteq \mathbb{T}^k$ and Lebesgue
measurable $A \subseteq \mathbb{T}^k$, let $D(A,F) = \vert \lambda(A) -
\frac{\vert A \cap F \vert}{\vert F \vert} \vert$.  We call $D(A,F)$ the
\emph{discrepancy} of $A$ with respect to $F$. \end{definition}

\begin{definition}  Let $\mathbb{Z}^d \acts \mathbb{T}^k$ be an action by
translations $x_1, \dots, x_d$.  For $x \in \mathbb{T}^k$ and $N \in
\mathbb{N}$, let $R_N(x) = \{ x + n_1x_1 + \dots + n_dx_d \mid 0 \leq n_i < N$ for
all $i\leq n\}$. \end{definition}

Laczkovich (\cite{laczkovich1992small_boundary} Theorem 1) showed the following:

\begin{theorem} \label{thm:L-decomp} Let $x_1, \dots x_d \in \mathbb{T}^k$ be
such that the associated action $\Z^d \acts \T^k$ is free.  If $A,B \subseteq
\T^k$ have the same positive Lebesgue measure and there are constants $K$ and
$\varepsilon$ such that for all $x \in \mathbb{T}^k$ and $N>0$,
$D(A,R_N(x)),D(B,R_N(x)) \leq KN^{-1-\varepsilon}$, then $A$ and $B$ are
equidecomposible by translations.  \end{theorem}

Laczkovich also showed that for $A \subseteq \mathbb{T}^k$ with $\mink(\partial
A) < k$, for all sufficiently large $d$, almost every choice of $x_1, \dots x_d$ satisfies the hypothesis of the
theorem.  Subsequent results use
the hypothesis of Theorem \ref{thm:L-decomp} to show that the pieces of the
equidecomposition can be taken to be Lebesgue (or Baire) measurable \cite{GMP}
or Borel \cite{MU}.  The proof in \cite{MNP} uses the same
discrepancy estimates with a random choice of translations, but interestingly
the additional property of small boundary of the pieces uses a \emph{positive}
measure subset of choices of translations, see \cite{MNP} Lemma 7.5.  The paper
\cite{MUnew} provides an alternative way to produce actions with the same
discrepancy estimates.  As an application, they produce such actions with
algebraic irrational coordinates.

\begin{lemma}[{\cite{MU}, \cite[Lemma 2.16]{MNP}}]\label{lem:flow-to-decomp}
Let $A,B \subseteq \mathbb{T}^k$ and $x_1,\dots x_d \in \mathbb{T}^k$ satisfy
the hypothesis of Theorem \ref{thm:L-decomp} and let $G$ be the Schreier graph of
the associated action.  For any bounded integral $(\mathbbm{1}_A -
\mathbbm{1}_B)$-flow $\phi$ on $G$, there is an equidecomposition between $A$
and $B$ whose pieces are local functions of $\phi, A, B$, and basic open
rectangles.  \end{lemma}

Thus, for the rest of the paper we can focus on finding integral $\Char_A - \Char_B$-flows with low complexity. Our starting point is the following sequence of low complexity non-integral flows.

\begin{lemma}[{\cite[Section 4]{MU}\cite[Lemma 3.1]{MNP}}]\label{lem:approximate_flows} Let $A,B \subseteq \mathbb{T}^k$ and $x_1,\dots x_d \in \mathbb{T}^k$ satisfy
the hypothesis of Theorem \ref{thm:L-decomp} and let $G$ be the Schreier graph of
the associated action. There are $\varepsilon,c > 0$
and (bounded) flows $0 = \phi_0,\phi_1, \phi_2, \dots,$ on $G$ such that
\begin{enumerate}
    \item $2^{dm}\phi_m$ is integer valued
    \item $||\phi_m^{out} - \mathbbm{1}_A + \mathbbm{1}_B||_{\infty} \leq c2^{-m(1+\varepsilon)}$.
    \item $||\phi_m - \phi_{m+1}||_{\infty} \leq c2^{-d-\varepsilon(m-1)}$.
    \item $2^{dm}\phi_m$ is a $(2^m - 1)$ local function of $A$ and $B$.
\end{enumerate}
\end{lemma}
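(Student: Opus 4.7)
The plan is to construct each $\phi_m$ via Fourier analysis on $\T^k$, following Laczkovich as implemented in the flow framework of \cite{MU}. Let $f = \Char_A - \Char_B$; since $\lambda(A) = \lambda(B)$, one has $\widehat f(0) = 0$. For each nonzero frequency $n \in \Z^k$, the character $e^{2\pi i n \cdot x}$ can be written as a discrete coboundary along the $j$-th generator direction via
\begin{align*}
e^{2\pi i n \cdot x} \;=\; \frac{e^{2\pi i n \cdot x} - e^{2\pi i n \cdot (x - x_j)}}{1 - e^{-2\pi i n \cdot x_j}},
\end{align*}
provided $n \cdot x_j \notin \Z$. I would pick, for each $n$, an index $j(n)$ maximizing $|1 - e^{-2\pi i n \cdot x_j}|$, and sum $\widehat f(n)$ times this identity over $0 < \|n\|_\infty \lesssim 2^m$ to obtain a rational-valued flow whose coboundary is the Fourier truncation of $f$ at scale $2^m$. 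After rounding so that the values lie in $2^{-dm}\Z$, this would be $\phi_m$.

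Two analytic inputs are required. First, Fourier decay of $f$ coming from the boundary regularity hypothesis $\mink \partial A, \mink \partial B < k$, controlling the $\ell^2$-mass of $\widehat f$ in dyadic frequency annuli. Second, a probability-one Diophantine lower bound of the form $\max_j |1 - e^{-2\pi i n \cdot x_j}| \gtrsim \|n\|^{-k/d - o(1)}$ uniform in $n$, which I would obtain by Borel--Cantelli applied to the Lebesgue measure of $d$-tuples simultaneously resonating with $n$ (the parameter $d = \lfloor k/\epsilon \rfloor + 1$ is calibrated precisely so that this Diophantine exponent is outstripped by the Fourier decay rate). Combining the two should yield $\|\phi_m^{out} - f\|_\infty = O(2^{-m(1+\varepsilon)})$ for the truncated rational flow, which is (2); applying the same estimate to a single dyadic annulus $\|n\|_\infty \sim 2^m$ would give the telescoping bound (3).

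The rounding step must then make $2^{dm}\phi_m$ integer valued. I would perform this inside each ball of radius roughly $2^m$ by absorbing rational residues along a fixed spanning tree, which perturbs $\phi^{out}$ by $O(2^{-dm})$, easily absorbed into the constant in (2). Since the truncated Fourier flow on a given edge depends only on $x$ through characters of frequency $\lesssim 2^m$, and the rounding is local at the same scale, property (4) should follow. The main obstacle is the joint Diophantine/Fourier estimate: a single almost-every $(x_1, \ldots, x_d)$ must give the uniform bound over all scales $m$ and all frequencies $n$ simultaneously, which requires the Borel--Cantelli tail to converge --- this is the content of the inequality $d\epsilon > k$ built into the parameter choice. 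A technical nuisance will be keeping the rounding procedure compatible with the locality count in (4), so that each additional scale adds at most $2^m$ to the range of dependence of $2^{dm}\phi_m$ on $A$ and $B$.
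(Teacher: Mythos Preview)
The paper does not prove this lemma; it is imported wholesale from \cite{MU} and \cite{MNP}. So there is no proof in the paper to compare against, only the cited constructions.

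That said, your proposed approach has a genuine gap, and it is in property (4). ``$2^{dm}\phi_m$ is a $(2^m-1)$-local function of $A$ and $B$'' means, in the sense of this paper, that for each $x$ the value $\phi_m(x,\gamma x)$ is determined by the membership pattern of the points $\{\delta x : \delta \in S^{\leq 2^m-1}\}$ in $A$ and $B$. Your Fourier truncation does not have this property: each $\widehat f(n) = \int_{\T^k}(\Char_A-\Char_B)e^{-2\pi i n\cdot y}\,dy$ is a global integral over all of $A$ and $B$, so the truncated flow at $x$ depends on \emph{all} of $A$ and $B$, not just on how $A$ and $B$ meet the $\Gamma$-orbit of $x$ near $x$. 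Your remark that ``the truncated Fourier flow on a given edge depends only on $x$ through characters of frequency $\lesssim 2^m$'' concerns regularity in the variable $x$, which is a different (and here irrelevant) notion of locality.

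The construction in \cite{MU} that the paper is quoting avoids Fourier analysis entirely. One defines $\phi_m$ directly as a discrete average of $\Char_A-\Char_B$ over rectangular boxes in $\Z^d$ of side length $2^m$ (so (1) and (4) are immediate by inspection), and the bounds (2) and (3) come from Laczkovich's discrepancy estimates for random translates of sets with small boundary, not from Fourier decay. The probability-one input is a discrepancy bound of the form $\bigl|\sum_{\gamma\in R}(\Char_A-\Char_B)(\gamma x)\bigr| \le c\,|R|^{1-\delta}$ uniformly over boxes $R\subseteq\Z^d$, which is where the Minkowski-dimension hypothesis and the choice of $d$ enter. If you want to salvage a proof, you should replace the Fourier construction by this box-averaging one.
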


By (3), the pointwise limit of the $\phi_m$'s, call it $\phi_\infty$, exists and is bounded, and by (2) it is an $\Char_A - \Char_B$-flow. The Integral Flow Theorem (\ref{lem:integral-flow}) then implies that a bounded integral $\Char_A - \Char_B$-flow exists, proving Laczkovich's theorem by Lemma \ref{lem:flow-to-decomp}.

We now explain how toasts enter the picture. By (4), $\phi_\infty$ is Borel if
$A$ and $B$ are, but the integral flow theorem may not produce a Borel integral
flow. Instead, Marks and Unger show that a toast with Borel layers can be used
to ``round'' $\phi_\infty$ one layer at a time, producing successive Borel
$\Char_A - \Char_B$-flows, say $\psi_0,\psi_1,\ldots$, with $\psi_n$ integral
and equal to $\psi_{n+1}$ on the first $n$ layers. The pointwise limit of the $\psi_n$'s then exists and is a bounded integral Borel $\Char_A - \Char_B$-flow, proving the theorem.

We said above that one of the levels saved by M\'ath\'e--Noel--Pikhurko came from a construction of a simpler toast. The other comes from a refinement to the above strategy: Instead of first constructing $\phi_\infty$ and then rounding it layer by layer, they prove that one can, after passing to a subsequence, just use the simpler function $\phi_m$ on layer $m$. The following records the result:

\begin{lemma}[{\cite[Essentially Lemma 6.9]{MNP}}]\label{lem:mnp_rounding}
    Let $\Z^d \acts X$ be a free action with Schreier graph $G$ with respect to the generating set $\{\gamma \in \Z^d \mid ||\gamma||_\infty = 1\}$. 
    Let $\phi_0,\phi_1,\ldots$ be flows on $G$ satisfying (1)-(3) from Lemma \ref{lem:approximate_flows} for some $\varepsilon > 0$. 
    Let $T_0,T_1,\ldots$ be a 2-toast on $G$.
    There are a subsequence $\phi_{m_0},\phi_{m_1},\ldots$ and a bounded integral $\Char_A - \Char_B$-flow $\psi$ such that for each $n$, the restriction of $\psi$ to the set of edges meeting $T_n$ is a local-function of $T_0,\ldots,T_n$ and $\phi_{m_0},\ldots,\phi_{m_n}$. 
\end{lemma}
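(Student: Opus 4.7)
The plan is to construct $\psi$ by inductively rounding $\phi_{m_n}$ on the pieces of the toast layer $T_n$, one $n$ at a time, applying the Integral Flow Theorem (Lemma~\ref{lem:integral-flow}) to each finite piece. Property (3) of the 2-toast forces a laminar nested structure on the pieces: for $k<n$, each $G^{\leq 2}$-component of $T_k$ either lies inside a unique component of $T_n$ or is disjoint from and $\geq 2$ away from $T_n$. Together with the geometric decay $\|\phi_{m+1}-\phi_m\|_\infty = O(2^{-d-\varepsilon(m-1)})$ from Lemma~\ref{lem:approximate_flows}(3), this allows us to pick a subsequence along which the successive corrections are summable while keeping each stage's work strictly local.

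Let $D$ be a uniform bound on the $G^{\leq 2}$-diameter of pieces in every layer (toast property (2)). Pick $m_0<m_1<\cdots$ large enough that both $\sum_n (D+1)\|\phi_{m_{n+1}}-\phi_{m_n}\|_\infty$ and $\sum_n \|\phi_{m_n}^{out}-\Char_A+\Char_B\|_\infty$ are finite and small. Write $U_n = T_0\cup\cdots\cup T_n$. Build $\psi$ in stages $n=0,1,2,\ldots$. At stage $n$, for each $G^{\leq 2}$-component $C$ of $T_n$ that contains a vertex of $T_n\setminus U_{n-1}$, we finalize $\psi$ on every edge incident to $C$: on the edges exiting $C$ (to vertices not in $T_n$), set $\psi$ to the nearest integer of $\phi_{m_n}$; on interior edges of $C$ already finalized at an earlier stage---those internal to a lower-layer piece that, by the laminar structure, is nested inside $C$---keep the previously assigned integer value; on the remaining interior edges of $C$, apply the Integral Flow Theorem on the finite graph $C$. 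The prescribed boundary is integer valued, the out-defect of $\phi_{m_n}\res C$ from $\Char_A-\Char_B$ is small by Lemma~\ref{lem:approximate_flows}(2), and the theorem produces an integer $(\Char_A-\Char_B)$-flow $\psi_C$ on $C$ agreeing with the prescribed boundary and satisfying $\|\psi_C - \phi_{m_n}\res C\|_\infty = O(D)$ times a summable quantity coming from the subsequence.

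The pointwise limit $\psi$ is therefore a bounded integer $(\Char_A-\Char_B)$-flow: boundedness comes from summability of the stagewise corrections, and the flow equation at a vertex $x$ is verified at the stage at which $x$ is first assigned. Locality of the restriction of $\psi$ to $\{e \in E(G) \mid e\cap T_n\neq\emptyset\}$ as a local function of $T_0,\ldots,T_n$ and $\phi_{m_0},\ldots,\phi_{m_n}$ follows because every such edge is finalized at some stage $k\leq n$: if both endpoints lie in $T_n$ then the edge is interior to the $T_n$-piece containing them and finalized at stage $\leq n$; if only one endpoint is in $T_n$ then the edge exits that $T_n$-piece and is finalized at stage $n$. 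In either case, the stage's work on the piece containing the edge depends only on $T_0,\ldots,T_k$ and $\phi_{m_k}$ within an $O(D)$-neighborhood.

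The main obstacle is that when a later stage $n'$ processes the outer component containing $C$, the integer values previously set on the exiting edges of $C$ (computed from $\phi_{m_n}$) must be consistent with the integer flow being built from $\phi_{m_{n'}}$. The discrepancy $\|\phi_{m_{n'}}-\phi_{m_n}\|_\infty$ is a summable tail by the subsequence choice, and the bounded diameter of the outer piece gives the Integral Flow Theorem enough slack to absorb this as an integer circulation on the remaining interior edges of the outer piece, producing only a small bounded adjustment. This bookkeeping is essentially the content of \cite[Lemma 6.9]{MNP}, which is why the same subsequence works simultaneously at every layer.
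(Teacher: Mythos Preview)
Your outline has the right overall shape---round on the boundaries of pieces, then fill in interiors with the Integral Flow Theorem---but it contains a genuine gap at the boundary-rounding step, and one minor misstatement.

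The minor point first: there is no single $D$ bounding the diameters of pieces across \emph{all} layers. Toast property~(2) gives a bound $D_n$ for each $n$ separately, and in every construction these grow. This is easily repaired: choose $m_n$ large relative to $D_n$ rather than to a fixed $D$.

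The real problem is the sentence ``on the edges exiting $C$, set $\psi$ to the nearest integer of $\phi_{m_n}$; \ldots\ the theorem produces an integer $(\Char_A-\Char_B)$-flow $\psi_C$ on $C$ agreeing with the prescribed boundary.'' Two things go wrong here. First, the Integral Flow Theorem (Lemma~\ref{lem:integral-flow}) does not let you prescribe values on a specified set of edges; it only promises an integral $f$-flow within $1$ of a given $f$-flow. Second, and more fundamentally, for \emph{any} integral $(\Char_A-\Char_B)$-flow on $C$ with prescribed boundary values to exist, the boundary values must satisfy the constraint
\[
\sum_{(x,y)\in\partial_E C}\psi(x,y)\;=\;\sum_{x\in C}\bigl(\Char_A(x)-\Char_B(x)\bigr).
\]
The right side is a fixed integer. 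On the left, $\sum_{(x,y)\in\partial_E C}\phi_{m_n}(x,y)$ is indeed close to this integer (by property~(2) of the $\phi_m$'s, with $m_n$ large relative to $|C|$), but rounding each edge \emph{independently} to the nearest integer can shift the sum by as much as $|\partial_E C|/2$, which is not controlled by $m_n$ at all. So your prescribed boundary is in general incompatible with the target divergence, and no interior completion exists. The same obstruction recurs, compounded, at later stages when previously frozen inner boundaries are combined with freshly rounded outer boundaries.

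This is precisely the difficulty the paper's argument is built to address. Rather than rounding each boundary edge independently, the paper finds an Eulerian circuit in the triangle graph $\Delta_{\partial_E S}$ (Lemma~\ref{lem:circuit}), performs a single \emph{adjustment step} on one edge to make the total outflow the correct integer, and then rounds the remaining boundary edges one at a time by pushing the fractional part around a triangle---i.e.\ by adding a \emph{circulation}, which does not change any vertex outflow. Only after the boundaries have been made integral in this outflow-preserving way is the Integral Flow Theorem invoked on the complementary finite regions. The Eulerian-circuit mechanism is the missing idea in your proposal.
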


The indices $m_i$ just need grow fast enough as a function of $\varepsilon$ and the diameter bound for the pieces in each toast layer. See \cite[Claim 6.9.1]{MNP} and the following discussion. 
The statement of Lemma 6.9 in \cite{MNP} is for their notion of toast, which as discussed previously requires the pieces to be connected. 
This assumption is used in the proof of their Lemma 6.7 and the analogous Claim 6.9.1. 
However, it is not actually needed, as acknowledged in Footnote 1 on page 36 in \cite{MNP}. (The footnote refers to the separation of pieces within each layer, but by taking the connected components of each of our pieces, we can get a toast with connected pieces but where disitinct pieces in a layer may have distance $\leq 2$ from each other.)
We give a sketch of the proof of Lemma \ref{lem:mnp_rounding} in Section \ref{sec:rounding}. 

Putting everything in this section together tells us that simple toasts yield simple equidecompositions, as recorded by the following Corollary. This proves Theorem \ref{thm:equi_torus} upon providing the toast from Theorem \ref{thm:intro_torus_toast}

\begin{corollary}\label{cor:flow-to-equi} Let $A,B \subseteq \mathbb{T}^k$ and
$x_1,\dots x_d \in \mathbb{T}^k$ satisfy the hypothesis of Theorem
\ref{thm:L-decomp}, let $G$ be the Schreier graph of the associated action and
let $T_0,T_1,\ldots$ be a 2-toast for $G$. The sets $A$ and $B$ are
equidecomposible with pieces in $\boldDelta_\Gamma(\{A,B\} \cup \{T_n \mid n \in
\N\} \cup \boldSigma^0_1)$.  \end{corollary}

\begin{proof}
    We claim that the flow $\psi$ from Lemma \ref{lem:mnp_rounding} is coded by sets in 
    $\boldDelta_\Gamma(\{A,B\} \cup \{T_n \mid n \in \N\})$.
    This suffices by Lemma \ref{lem:flow-to-decomp} and Proposition \ref{prop:Delta_algebra}. 
    Say $\psi$ is bounded by $B \in \N$. Then $\psi$ is coded by the sets $Z_{i,\gamma} = \{x \in X \mid \psi(x,\gamma x) = i\}$ for $\gamma \in S$ and $i \in \{-B,\ldots,B\}$. For each $\gamma$, the sets $Z_{i,\gamma}$ partition $X$, 
    so it suffices to show that each is in $\boldSigma_{\Gamma}(\{A,B\} \cup \{T_n \mid n \in \N\} \cup \boldSigma^0_1)$.
    Since $\bigcup_n T_n = X$, we have 
    \[x \in Z_{i,\gamma} \Leftrightarrow \exists n\in \N, x \in T_n \wedge (\psi \res E(T_n))(x,\gamma x) = i, \]
    where $E(T_n)$ denotes the set of edges meeting $T_n$.
    For each $n$, the predicate on the right is in $\Bool_\Gamma(\{T_0,\ldots,T_n\} \cup \{A,B\})$
    by Lemma \ref{lem:mnp_rounding} and (4) from Lemma \ref{lem:approximate_flows}. 
\end{proof}

\begin{proof}[Proof of Theorem \ref{thm:equi_torus}]
    The corollary applied to a toast with $\boldDelta^0_2$ layers gives us an equidecomposition with $\boldDelta_\Gamma(\{A,B\} \cup \boldDelta^0_2)$ pieces. This class is equal to the claimed class of $\boldDelta_\Gamma(\{A,B\} \cup \boldSigma^0_1)$ by the monotonicity and idempotence of $\boldDelta_\Gamma$:
    \[ \boldDelta_{\Gamma}(\{A,B\} \cup \boldSigma^0_1) =\boldDelta_\Gamma( \boldDelta_\Gamma (\{A,B\} \cup \boldSigma^0_1)) \supseteq \boldDelta_\Gamma(\{A,B\} \cup \boldDelta^0_2) \supseteq \boldDelta_\Gamma(\{A,B\} \cup \boldSigma^0_1),\]
    where for the middle inclusion we used the fact that $\boldDelta^0_2 = \boldDelta_\Gamma(\boldSigma^0_1)$. 
\end{proof}

\section{Asymptotic Dimension}\label{sec:asdim}

Recall that $\Gamma \acts X$ is an action with Schreier graph $G$ with respect to some finite generating set $S$ for $\Gamma$.


\begin{definition}
    Let $d,r \in \N$. An \emph{asymptotic dimension $d$ witness for $G$ at scale $r$} is a a cover of $X$ by sets $U^0,\ldots,U^d$ such that for each $i$, the connected components of $G^{\leq r} \res U_i$ have uniformly bounded diameter. 

    The \emph{asymptotic dimension} of $G$ is the least $d$ such that $G$ admits such a witness at every scale $r$, or $\infty$ if there is no such $d$. We write $d = \asdim(G)$. 
    If $G$ is the Cayley graph of $\Gamma$ we write $d = \asdim(\Gamma)$. (This turns out not to depend on the choice of $S$.)
    
    For $\mathcal{A} \subseteq \mathcal{P}(X)$ we write $d = \asdim_\mathcal{A}(G)$ if $d$ is least such that an asymptotic dimension $d$ witness exists at every scale using sets from $\mathcal{A}$. 
\end{definition}

In the past few years, consideration of \textit{Borel} asymptotic dimension witnesses has played a key role in Borel combinatorics and dynamics following its introduction in \cite{Borelasdim}. 
In fact, in many situations these witnesses can be chosen to be clopen, including for actions of $\Z^d$ on 0-dimensional spaces:

\begin{theorem}[\cite{Borelasdim}, Theorem 10.7]\label{thm:clopen_asdim}
    Let $\Z^d \acts X$ be a continuous free action on a 0-dimensional second countable Hausdorff space $X$ with Schreier graph $G$. Then $\asdim_{\boldDelta^0_1}(G) = d$. 
\end{theorem}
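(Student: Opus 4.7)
The plan is to prove $\asdim_{\boldDelta^0_1}(G) \geq d$ and $\asdim_{\boldDelta^0_1}(G) \leq d$, where $G$ denotes the Schreier graph of the action. The lower bound is essentially classical: since the action is free, each orbit of $G$ equipped with the induced subgraph structure is isomorphic to the Cayley graph of $\Z^d$, which has asymptotic dimension exactly $d$ by a standard computation. Any asymptotic dimension witness for $G$ restricts to one on each orbit, so $\asdim(G) \geq d$, and a fortiori $\asdim_{\boldDelta^0_1}(G) \geq d$ by the monotonicity of $\asdim_{\mathcal{A}}$ in $\mathcal{A}$.

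The upper bound is the heart of the argument. Fix a scale $r \in \N$ and choose $N \gg r$. Recall the classical construction on $\Z^d$: tile $\Z^d$ by cubes of side $N$ and use a $(d+1)$-fold staircase/shifted-copy pattern of these cubes to produce a cover $V^0,\dots,V^d$ of $\Z^d$ such that each $V^i$ has components at scale $r$ of diameter $O(N)$. Crucially, each $V^i$ is describable as a finite union of $\Z^d$-translates of a fundamental domain for the sublattice $(N\Z)^d$. The plan is to transfer this construction to $X$ equivariantly using clopen pieces. It suffices to produce a clopen \emph{$(N\Z)^d$-marker set} $M \subseteq X$: a clopen set such that the finitely many translates $\gamma \cdot M$ for $\gamma \in \{0,\dots,N-1\}^d$ are pairwise disjoint and cover $X$. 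Given such $M$, each $U^i$ is defined as the finite union of those $\gamma \cdot M$ prescribed by the $\Z^d$-side construction; this is a finite Boolean combination of clopen sets, hence clopen, and the bounded-diameter property at scale $r$ transfers from $\Z^d$ to each orbit of $X$ using freeness.

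The main obstacle is producing the clopen marker set $M$, which is a clopen marker lemma for free continuous $\Z^d$-actions on 0-dimensional second countable Hausdorff spaces. The plan follows a Gao--Jackson--Krohne--Seward-style construction: start from any clopen complete section (which exists because, by continuity and freeness, any point has a small clopen neighborhood whose $\{0,\dots,N-1\}^d$-translates are pairwise disjoint, and second countability combined with the 0-dimensional basis lets one extract a countable such family and disjointify it equivariantly), then iteratively refine within the clopen algebra. Two properties must be achieved: (i) the minimum orbit-spacing between distinct points of $M$ is at least $N$ (accomplished by successively removing the clopen sets of offending pairs), and (ii) every orbit point lies within $\{0,\dots,N-1\}^d$ of some point of $M$ (accomplished by adding back clopen sets to fill large gaps). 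The 0-dimensional hypothesis is what ensures every step remains clopen; in the Borel setting one would obtain only Borel markers. Once $M$ is in hand, the construction of the $U^i$ is purely finite-combinatorial and introduces no further complexity, completing the upper bound.
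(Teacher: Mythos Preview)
The paper does not prove this theorem; it is cited from \cite[Theorem~10.7]{Borelasdim}, so there is nothing here to compare against and I will just assess your argument.

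Your lower bound is fine. The upper bound has a genuine gap: you reduce to the existence of a clopen $M$ whose translates $\gamma\cdot M$, $\gamma\in\{0,\dots,N-1\}^d$, partition $X$. Such an $M$ is equivalent to a continuous $\Z^d$-equivariant map $X\to(\Z/N\Z)^d$, i.e., an \emph{exact} clopen Rokhlin tower of shape $[0,N)^d$, and this need not exist. Already for $d=1$, take any Sturmian subshift $X_\alpha$ with $\alpha$ irrational: it is a free continuous $\Z$-action on a $0$-dimensional compact metrizable space, but its topological eigenvalues lie in $\{e^{2\pi i n\alpha}:n\in\Z\}$, which contains no nontrivial root of unity, so there is no continuous factor onto $\Z/N\Z$ and hence no clopen $N$-tower for any $N\geq 2$. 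Your ``remove offenders / fill gaps'' iteration cannot circumvent this---concretely, once the orbit-gaps are all of length $\geq N$, a gap of length not divisible by $N$ can never be filled to exact spacing $N$ without creating a new spacing violation. The obstruction is a dynamical eigenvalue condition, not bookkeeping.

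The argument in \cite{Borelasdim} avoids exact tilings entirely. An asymptotic-dimension witness is much weaker than a tiling---the $U^i$ may overlap and their $G^{\leq r}$-components need only be uniformly bounded, not of a fixed shape---and it is precisely this slack that permits a clopen construction when exact clopen towers are unavailable. One starts from clopen marker sets that are merely well-separated (which \emph{do} exist, essentially by your argument) and covers $X$ with $d+1$ clopen sets via an inductive thickening scheme that uses only local rules.
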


Theorem 10.7 actually applies to a much wider class of finitely generated amenable groups $\Gamma$.

\begin{definition}\label{def:clopen_asdim}
    We say $\Gamma$ \textit{admits clopen asymptotic dimension witnesses} if there is some $d \in \N$ such that for any continuous free action $\Gamma \acts X$ on a 0-dimensional second countable Hausdorff space $X$ with Schreier graph $G$, $\asdim_{\boldDelta^0_1}(G) = d$. 
\end{definition}

By Lemma 10.3 from \cite{Borelasdim} this $d$ can always be taken to be $\asdim(\Gamma)$. For example, Theorem \ref{thm:clopen_asdim} says that $\Z^d$ admits clopen asymptotic dimension witnesses. In \cite{Borelasdim} groups of polynomial growth, polycyclic groups, and more are also shown to have this property.


The following lemma touches on the connection between descriptive combinatorics and local algorithms, and is clear to any familiar with that connection. The general phenomenon, proved separately by Bernshteyn and Seward \cite[Theorem 1.14, 1.15]{BerCts} is that for a ``local problem'', having continuous solutions is equivalent to being able to produce solutions with efficient ``local algorithms''. To avoid introducing the theory surrounding these ideas, our statement of the lemma and its proof are somewhat awkward. 

\begin{lemma}
    Let $\Gamma = \langle S \rangle$ be a countable group with $\asdim(\Gamma) = d < \infty$ and finite symmetric generating set $S$ including 1. $\Gamma$ admits clopen asymptotic dimension witnesses if and only if the following holds: For every $r \in \N$ there exists $R \in \N$ such that if $\Gamma \acts X$ is free and $c : X \to |S^R|$ is a proper coloring of $G^{\leq R}$, then there is an asymptotic dimension $d$ witness at scale $r$, say $U^0,\ldots,U^d$, with each $U^i$ a local function of $c$.
\end{lemma}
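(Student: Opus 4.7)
I would split into the two implications, handling the easy backward direction first and then the harder forward direction by a compactness argument over a carefully chosen compact free zero-dimensional $\Gamma$-space.

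For $(\Leftarrow)$, assume the local-rule condition. Given a free continuous action $\Gamma \acts X$ on a zero-dimensional second-countable Hausdorff space and a scale $r \in \N$, pick $R$ as guaranteed by the hypothesis. Since $G^{\leq R}$ has maximum degree $|S^R|-1$, a standard continuous-greedy construction on a clopen basis of $X$ (the continuous analogue of the Kechris-Solecki-Todorcevic chromatic bound) produces a clopen proper $|S^R|$-coloring $c$ of $G^{\leq R}$. Applying the hypothesized local rule to $c$ yields an asdim $d$ witness at scale $r$, each piece a finite Boolean combination of translates of fibers of $c$ and therefore clopen. This gives $\asdim_{\boldDelta^0_1}(G) \leq d$, matching the lower bound coming from $\asdim(\Gamma) = d$.

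For $(\Rightarrow)$, the plan is to apply the hypothesis on a well-chosen compact free zero-dimensional $\Gamma$-space and extract a uniform local rule via compactness. Let $K = |S^R|$ (with $R$ to be fixed later), and let $P_R \subseteq K^\Gamma$ be the shift-invariant compact subspace of proper $K$-colorings of the Cayley graph power $\text{Cay}(\Gamma)^{\leq R}$; the shift action on $P_R$ is generally not free. I would form the product $Y = P_R \times Q$, where $Q$ is any compact free zero-dimensional $\Gamma$-space (such $Q$ exists for every infinite countable group, e.g.\ as a free Cantor minimal action). The diagonal action on $Y$ is free, so by hypothesis $Y$ admits clopen asdim $d$ witnesses $U^0, \dots, U^d$ at scale $r$. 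Compactness of $Y$ forces each $U^i$ to be a \emph{finite} union of basic clopens, hence determined by finitely many $K^\Gamma$-coordinates together with a clopen pattern in $Q$. Given any free action $\Gamma \acts X$ with proper coloring $c$, the equivariant map $\phi_c : X \to P_R$ sending $x$ to the function $\gamma \mapsto c(\gamma\inv x)$ then lets me pull these witnesses back to local functions of $c$ on $X$.

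The principal obstacle is eliminating the $Q$-dependence so that the resulting sets on $X$ really are local functions of $c$ alone. I plan to handle this by fixing a single $q^* \in Q$ and replacing each $U^i$ by its slice $U^{i,*} := \{y \in P_R : (y,q^*) \in U^i\}$, which is clopen in $P_R$ and hence finitely determined by $K^\Gamma$-coordinates; a Baire-category or minimality argument on $Q$ should guarantee that for a generic choice of $q^*$ the slices still cover $P_R$ and still have $r$-components of uniformly bounded diameter inside the image of every $\phi_c$. More conceptually, this is an instance of the Bernshteyn-Seward correspondence cited just before the lemma: the hypothesis supplies continuous solutions to the locally checkable labeling problem ``produce an asdim witness from a proper $|S^R|$-coloring'' on every free continuous zero-dimensional $\Gamma$-space, and that correspondence upgrades them to a uniform local algorithm. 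The authors wish to avoid formally invoking that machinery, so the self-contained proof will have to execute the compactness argument above directly.
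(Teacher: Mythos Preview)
Your backward direction $(\Leftarrow)$ matches the paper's exactly: greedy clopen coloring plus the local rule. Nothing to add there.

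For $(\Rightarrow)$ you have misread the paper's intent. The sentence ``To avoid introducing the theory surrounding these ideas, our statement of the lemma and its proof are somewhat awkward'' refers to the \emph{statement}, not to avoiding the Bernshteyn--Seward theorem. The paper's proof of $(\Rightarrow)$ is simply: observe that ``find $c:X\to(d+1)$ with each color class having $G^{\leq r}$-components of diameter $\leq D$'' is an LCL (checkable on balls of radius $D+r$), note the hypothesis gives a continuous solution on the free part of the Bernoulli shift, and invoke Bernshteyn--Seward to convert this into a local algorithm from proper colorings. That is the entire argument.

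Your self-contained compactness route has a real gap at the slicing step, and your own hedging (``should guarantee'') is warranted. On $Y=P_R\times Q$ with the diagonal action, the clopen witnesses $U^i$ have bounded $G^{\leq r}$-component diameters with respect to the \emph{diagonal} orbit structure: the neighbor of $(y,q)$ is $(\gamma y,\gamma q)$. Slicing at a fixed $q^*$ gives sets $U^{i,*}\subseteq P_R$, and pulling back by $\phi_c$ you are asking whether $(\gamma\phi_c(x),q^*)\in U^i$, not whether $\gamma\cdot(\phi_c(x),q^*)=(\gamma\phi_c(x),\gamma q^*)\in U^i$. The diameter bound on $Y$ controls the latter, not the former, so neither a Baire-category nor a minimality argument on $Q$ will recover the bound you need; the slice is simply not compatible with the dynamics. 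Any correct self-contained argument here would have to essentially reprove the relevant direction of Bernshteyn--Seward (e.g.\ work on the free part of the Bernoulli shift itself and extract the local rule from a clopen solution there via the ``twelve-tile'' or pattern-compactness machinery), which is considerably more than what you have sketched.
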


The number $|S^R|$ is there just to ensure that such a coloring always exists (by the greedy algorithm). We emphasize that the second clause in this Lemma involves no topology; it amounts to saying that the problem of finding asymptotic dimension witnesses at a given scale is an LCL, as explained in the proof below. 

\begin{proof}
    Suppose $\Gamma$ admits clopen asymptotic dimension witnesses and let $r \in \N$. The free part of the Bernoulli shift of $\Gamma$ (with base space $2^\omega)$ has a clopen partition $U^0,\ldots,U^d$ which is an asdim witness at scale $r$. Let $D \in \N$ be a bound on the diameters of the components of $G^{\leq r} \res U^i$. 

    The problem ``given a free action $\Gamma \acts X$, find a function $c : X \to (d+1)$ such that for each $i$, the diameter of each component of $G^{\leq r} \res c\inv(i)$ is at most $D$'' is a so-called LCL on $\Gamma$. This means that given a function $c : X \to (d+1)$, one can check whether $c$ satisfies the above condition by checking a fixed radius neighborhood of each point: in this case if $x \in c\inv(i)$, checking the ball $B(x,D+r)$ lets one determine whether $x$'s component in $G^{\leq r} \res c\inv(i)$ has diameter at most $D$. The conclusion now follows from the aforementioned theorem of Bernshteyn and Seward, which says that an LCL on $\Gamma$ can be solved continuously on the Bernoulli shift if and only if there is a ``local algorithm'' transforming a proper coloring into a solution just as in the conclusion of our lemma. 

    Conversely, if $\Z^d \acts X$ is a continuous free action on a 0-dimensional second countable Hausdorff space, then for any $R \in \N$ $G^{\leq R}$ admits a continuous $|S^R|$-coloring by \cite[Lemma 4.5]{bernshteyn2023distributed}, so since a local function of clopen sets is clopen, we can find clopen asymptotic dimension witnesses at each scale. 
\end{proof}

Therefore, if $\Gamma$ admits clopen asymptotic dimension witnesses, the task of finding simple witnesses to asymptotic dimension on arbitrary (i.e. not 0-dimensional) $\Gamma$-spaces reduces to that of finding simple proper colorings. The following will be used to do this in the case $X = \T^k$. 

\begin{lemma}
    Suppose $\Gamma \acts X$ is free and continuous and $X$ is a compact Polish space with basis $\mathcal{B}$. For any $R \in \N$, there is a proper coloring $c : X \to |S^R|$ of $G^{\leq R}$ with each $c\inv(i) \in \Bool_\Gamma(\mathcal{B})$. 
\end{lemma}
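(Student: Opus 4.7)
The plan is to use compactness together with freeness and continuity to partition $X$ into finitely many pieces in $\Bool(\mathcal{B})$ that are each \emph{$R$-sparse} (i.e., contain no two distinct points at $G$-distance $\leq R$), and then reduce the number of pieces to $|S^R|$ by a greedy recoloring whose output is still a local function of the original pieces. The only mildly delicate step will be tracking the locality radius through the recursive definition of the recoloring; everything else is routine from freeness, continuity, and compactness.

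For the first step, fix $x \in X$ and $\gamma \in S^R \setminus \{1\}$. By freeness $x \neq \gamma x$, so continuity yields disjoint open sets $A \ni x$ and $B \ni \gamma x$; then $V_{x,\gamma} := A \cap \gamma^{-1} B$ is an open neighborhood of $x$ with $V_{x,\gamma} \cap \gamma V_{x,\gamma} = \emptyset$. Intersecting over the (finitely many) $\gamma \in S^R \setminus \{1\}$ and shrinking inside $\mathcal{B}$, I obtain $U_x \in \mathcal{B}$ containing $x$ in which no two distinct points lie at $G$-distance $\leq R$. Compactness gives a finite subcover $U_1, \ldots, U_n$, and the refinement $W_i := U_i \setminus (U_1 \cup \cdots \cup U_{i-1})$ is an $R$-sparse partition of $X$ with each $W_i \in \Bool(\mathcal{B})$.

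Setting $m := |S^R|$, I recursively define $c : X \to \{0, \ldots, m-1\}$: for $x \in W_i$, let $c(x)$ be the least element of $\{0, \ldots, m-1\}$ not appearing in the set $\{c(y) : y \in W_j \text{ for some } j < i, \ y \text{ adjacent to } x \text{ in } G^{\leq R}\}$. By freeness the degree of $G^{\leq R}$ is at most $m - 1$, so such a value always exists. The coloring is proper because two $G^{\leq R}$-adjacent points must lie in distinct $W_i$'s (by $R$-sparsity), so whichever is colored later avoids the color of the earlier. Finally, by induction on $i$, the value $c(x)$ for $x \in W_i$ is determined by the $W_k$-memberships ($k < i$) of points in $B(x, (i-1)R)$; hence $c^{-1}(j)$ is an $(n-1)R$-local Boolean combination of the $W_i$'s, and since each $W_i \in \Bool(\mathcal{B})$ this places $c^{-1}(j)$ in $\Bool_\Gamma(\mathcal{B})$, as required.
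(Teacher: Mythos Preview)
Your proof is correct and follows essentially the same approach as the paper's: both obtain a finite cover of $X$ by $R$-discrete basic open sets via freeness, continuity, and compactness, then apply the greedy coloring from \cite{KST} and track that each color class is a local function of the cover. The only cosmetic differences are that you construct the $R$-discrete neighborhoods directly (rather than via the paper's sequence/contradiction argument in Claim~\ref{claim:discrete_basis}) and you explicitly refine the cover to a partition before coloring, whereas the paper folds this into the definition of the partial colorings $c_n$.
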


The proof of this Lemma uses the same strategy as that of Proposition 4.6 from \cite{KST}, which produced the same type of Borel colorings without keeping track of complexity. This is also the strategy used to prove the aforementioned fact about continuous colorings for 0-dimensional spaces. 

\begin{proof}

We start with the following claim, which does not require compactness. Let $T = S^R \setminus\{1\}$, so that $G^{\leq R}$ is the Schreier graph of our action with respect to $T$.

\begin{claim}\label{claim:discrete_basis}
    There is a basis $\mathcal{B}' \subseteq \mathcal{B}$ with each $B \in \mathcal{B'}$ $R$-discrete. 
\end{claim}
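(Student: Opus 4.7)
The claim amounts to showing that for every $x \in X$ and every open neighborhood $U$ of $x$, there exists $B \in \mathcal{B}$ with $x \in B \subseteq U$ such that $B$ is $R$-discrete; taking $\mathcal{B}'$ to be the collection of all such $R$-discrete basic opens then yields the desired basis contained in $\mathcal{B}$. Note that $B$ is $R$-discrete if and only if $B \cap \gamma B = \emptyset$ for every $\gamma \in T = S^R \setminus \{1\}$: freeness rules out the $\gamma = 1$ case, and $T$ is finite since $S$ is.

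First I would handle each $\gamma \in T$ individually. By freeness, $\gamma x \neq x$, so the Hausdorff property of $X$ produces disjoint open sets $A_\gamma \ni x$ and $C_\gamma \ni \gamma x$. Setting $V_\gamma := A_\gamma \cap \gamma^{-1} C_\gamma$, which is open by continuity of the $\gamma$-action and contains $x$, one checks that $V_\gamma \subseteq A_\gamma$ and $\gamma V_\gamma \subseteq C_\gamma$, whence $V_\gamma \cap \gamma V_\gamma \subseteq A_\gamma \cap C_\gamma = \emptyset$.

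Next I would intersect over $\gamma$: the set $V := U \cap \bigcap_{\gamma \in T} V_\gamma$ is open (the intersection is finite) and contains $x$, and for every $\gamma \in T$ we have $V \cap \gamma V \subseteq V_\gamma \cap \gamma V_\gamma = \emptyset$, so $V$ is $R$-discrete. Any basic $B \in \mathcal{B}$ with $x \in B \subseteq V$ is then contained in $U$ and inherits $R$-discreteness from $V$, completing the argument.

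There is no real obstacle here: the argument is a routine Hausdorff-plus-continuity separation, relying only on freeness, continuity, the Hausdorff property, and finiteness of $T$. In particular, compactness of $X$ is not used at this step, consistent with the remark preceding the claim; compactness will enter at the next stage to pass from this refined basis to a finitely-valued coloring via a finite subcover.
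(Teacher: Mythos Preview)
Your proof is correct. The paper, however, argues by contradiction using sequences: it takes $\mathcal{B}'$ to be the set of all $R$-discrete elements of $\mathcal{B}$, and supposes some $x$ has no $R$-discrete basic neighborhood. Shrinking a neighborhood base to $x$ (using that Polish spaces are first countable), one obtains sequences $y_n, z_n \to x$ with $z_n \in T y_n$; passing to a subsequence fixes a single $\gamma \in T$, and continuity then forces $\gamma x = x$, contradicting freeness.

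Your direct Hausdorff-separation argument is arguably cleaner: it avoids the appeal to first countability and the sequential extraction, needing only that $X$ is Hausdorff, the action is continuous and free, and $T$ is finite. The paper's sequential phrasing is a bit more compact on the page but relies on the metrizable structure. Both routes are standard, and both correctly observe (as you note) that $R$-discreteness is inherited by subsets, so finding any $R$-discrete open neighborhood suffices to conclude that the $R$-discrete basic opens form a basis.
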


\begin{proof}
    Let $\mathcal{B}'$ be the set of $R$-discrete elements of $\mathcal{B}$. Let $x \in X$. We want to show that there is an $R$-discrete $B \in \mathcal{B}$ containing $x$.

    If not, then there are sequences $y_n,z_n$ converging to $x$ with $z_n \in T y_n$ for all $n$. By passing to a subsequence we may assume there is a fixed $\gamma \in T$ so that $z_n = \gamma y_n$ for all $n$. But then by continuity we get $\gamma x = x$, contradicting freeness. 
\end{proof}

Therefore by compactness we may find a finite cover $B_0,\ldots,B_{N-1}$ of $X$ with each $B_n$ an $R$-discrete basis set. 
We now iterate over the $B_n$, at each stage giving all points in $B_n$ the least color avaiable to them. 
More explicitly, we define an increasing sequence of proper colorings $c_i$ of $G^{\leq R} \res \bigcup_{i < n} B_i$ inductively: Given $c_n$, $c_{n+1}$ is defined on $B_n \setminus \textnormal{dom}(c_n)$ by $c_{n+1}(x) = \min (\N \setminus c_n[Tx])$. One can check inductively that each $c_n$ is proper (since $B_n$ is $R$-discrete) and all its color classes are in $\Bool_\Gamma(\{B_i \mid i < n\})$. The number of colors used is clearly at most $|T|+1 = |S^R|$. Thus $c_N$ is as desired. 

\end{proof}

\begin{corollary}\label{cor:basis_asdim}
    Suppose $\asdim(\Gamma) = d$ and $\Gamma$ admits clopen asymptotic dimension witnesses. Suppose $\Gamma \acts X$ is free and continuous and $X$ is a compact Polish space with basis $\mathcal{B}$. 
    Then $\asdim_{\Bool_\Gamma(\mathcal{B})}(G) = d$. 
\end{corollary}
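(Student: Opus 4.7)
The proof is essentially a concatenation of the two lemmas immediately preceding the corollary, so there is not much to say beyond tracking the complexity through the composition. Given $r \in \N$, my plan is to invoke the first of these lemmas (the characterization of groups admitting clopen asymptotic dimension witnesses via LCLs) to obtain an $R \in \N$ with the property that, uniformly over free actions $\Gamma \acts X$ and proper colorings $c : X \to |S^R|$ of $G^{\leq R}$, one can extract an asymptotic dimension $d$ witness at scale $r$ whose pieces are local functions of $c$.

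Next, I would apply the second lemma (using compactness of $X$ together with the basis $\mathcal{B}$) to the number $R$ produced in the previous step. This gives me a proper coloring $c : X \to |S^R|$ of $G^{\leq R}$ whose color classes $c^{-1}(i)$ all lie in $\Bool_\Gamma(\mathcal{B})$. Composing these two outputs yields sets $U^0, \ldots, U^d$ which form an asymptotic dimension $d$ witness at scale $r$ and each of which is a local function of $c$, hence each $U^i \in \Bool_\Gamma(\{c^{-1}(j) : j < |S^R|\})$.

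To conclude, I would note that $\Bool_\Gamma$ satisfies the idempotence-style identity $\Bool_\Gamma(\Bool_\Gamma(\mathcal{A})) = \Bool_\Gamma(\mathcal{A})$ (immediate from the definition, since a finite Boolean combination of finite Boolean combinations of translates of translates is again a finite Boolean combination of translates). Applying this with $\mathcal{A} = \mathcal{B}$ gives $U^i \in \Bool_\Gamma(\mathcal{B})$, as required.

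There is no real obstacle here, since the heavy lifting — the LCL characterization of clopen asymptotic dimension witnesses and the construction of colorings from a basis — has already been done in the two preceding lemmas. The only thing to be careful about is that the locality radius built into the first lemma is fixed before choosing the coloring, which is precisely why the $R$-for-$r$ quantifier ordering in that lemma matters; I would make sure to invoke the lemmas in the correct order to ensure the finite radius of the Boolean combination defining $U^i$ from $c$ does not depend on $X$ or on the coloring, only on $r$ and $\Gamma$.
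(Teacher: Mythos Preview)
Your proposal is correct and is exactly the intended argument: the paper does not give an explicit proof of this corollary, precisely because it follows immediately by combining the two preceding lemmas in the way you describe. Your observation about the idempotence $\Bool_\Gamma(\Bool_\Gamma(\mathcal{B})) = \Bool_\Gamma(\mathcal{B})$ is the only bookkeeping step needed beyond that combination.
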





\section{Bounded Geometry Decompositions}\label{sec:bgd}

Recall that $\Gamma \acts X$ is an action with Schreier graph $G$ with respect to some finite generating set $S$ for $\Gamma$.
In this section we use witnesses to asymptotic dimension to construct the objects below.

\begin{definition}\label{def:BGD}
    We say that a sequence $( X_n)_{ n \in \N}$ of subsets of $X$ is a \textit{bounded geometry decomposition} (BGD) on $G$ with constants $P$ and $Q$ if, letting $X_n^\infty = \bigcup_{k \geq n} X_k$, the following hold:
    \begin{enumerate}
        \item $\bigcap_n X_n^\infty = \emptyset$. 
        \item For each $n$, there is a uniform bound $D_n$ on the diameters of connected components of $G \res (X \setminus X_n^\infty)$. In particular these components are all finite. 
        \item For any component $R$ of $G \res (X \setminus X_n^\infty)$, the set $\{ k \geq n \mid \dist(R,X_k) \leq Q \}$ has cardinality at most $P$. 
    \end{enumerate}
\end{definition}


This definition is very close to the definition of ``Weakly orthogonal decomposition'' from \cite{GJKS}, but we have dropped their additional ``weak orthogonality'' condition, hence the name change. Weakly orthogonal decompositions are an intermediate object in Gao, Jackson, Krohne, and Seward's construction of Borel toasts for actions of $\Z^d$. In the next section we will see that BGDs can play the same role by essentially the same proof, so in this section we focus on their construction. 



Before giving our construction of BGDs, we will need to introduce yet another intermediary object. 

\begin{definition}\label{def:rainbow_toast}
    A \emph{rainbow toast of dimension $d \in \N$} on $G$ is a sequence $( U_n^i)_{n \in \N,\ 0 \leq i \leq d}$ of subsets of $X$ such that. 
    \begin{enumerate}
        \item For each $n \in \N$, $U_n^0,\ldots,U_n^d$ are an asymptotic dimension $d$ witness for $G$ at scale $n$. 
        \item For each $0 \leq i \leq d$ and $n < m \in \N$, $\dist(U_n^i, \partial_o U_m^i) \geq n$. 
    \end{enumerate}
\end{definition}

Observe that for each $0 \leq i \leq d$,
the sequence $(U_n^i)_{n \in \N}$
is something like a toast, except that not every point in $X$ is necessarily in the union. However, each point \textit{is} covered by one of these sequences.
In fact one can check that each component of $G$ is covered by a single one of these toasts. 
Rainbow toasts were constructed in \cite[Lemma 4.6]{Borelasdim} from witnesses to asymptotic dimension:

\begin{lemma}[\cite{Borelasdim}]\label{lem:rainbow_toast}
    Let $\mathcal{A} \subseteq \mathcal{P}(X)$ and $d \in \N$. Suppose $\asdim_{\mathcal{A}}(X) \leq d$. Then $X$ has a rainbow toast $( U_n^i)$ of dimension $d$ with each $U_n^i \in \Bool_{\Gamma}(\mathcal{A})$.
\end{lemma}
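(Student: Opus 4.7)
I would build the layers $U_n^i$ by induction on $n$, using asymptotic dimension witnesses at a rapidly growing sequence of scales. First, invoking $\asdim_{\mathcal{A}}(G) \leq d$, fix for each $n$ an asymptotic dimension $d$ witness $V_n^0, \dots, V_n^d$ for $G$ at scale $r_n$ with all sets in $\Bool_{\Gamma}(\mathcal{A})$, and let $D_n$ denote the uniform bound on the diameters of components of $G^{\leq r_n} \res V_n^i$. The scales $r_n$ will be chosen to grow fast enough that $r_n$ dominates $n$ together with all diameter and "reach" parameters arising from layers $U_m^i$ with $m < n$; the exact rate can be specified recursively during the construction.

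Second, inductively define $U_n^i$ from $V_n^i$ by modifying it to respect the buffer condition with the previously defined layers. The useful observation is that condition (2) of Definition~\ref{def:rainbow_toast} is equivalent to saying that for each $m < n$ and each $G^{\leq m}$-component $Y$ of $U_m^i$, the set $B(Y, m)$ is either entirely contained in $U_n^i$ or entirely disjoint from $U_n^i$. This can be forced by, for each such $Y$, adjoining $B(Y, m)$ into $U_n^i$ when the corresponding piece of $V_n^i$ already meets $Y$, and excising $B(Y, m)$ from $U_n^i$ otherwise. Because $r_n$ is chosen much larger than the total reach of these modifications, distinct $G^{\leq r_n}$-components of $V_n^i$ stay well-separated after rounding, so the $G^{\leq n}$-components of $U_n^i$ retain a uniform diameter bound. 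The cover property $\bigcup_i U_n^i = X$ is inherited from the corresponding property of the $V_n^i$, and each $U_n^i$ is a bounded-radius local function of $V_0^i,\dots,V_n^i$, so it lies in $\Bool_{\Gamma}(\mathcal{A})$ by Proposition~\ref{prop:Delta_algebra} (applied to Boolean combinations rather than countable unions).

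The main obstacle is the geometric bookkeeping that justifies the recursive diameter and separation bounds: one must show that the rounding step at level $n$ does not cascade, i.e., that no piece of $V_n^i$ absorbs so many neighborhoods $B(Y,m)$ from different lower levels that the resulting $G^{\leq n}$-component blows up, and that no single such neighborhood straddles two distinct pieces of $V_n^i$. Both reduce to a careful induction on the parameters $D_n$ and the cumulative diameters of the pieces at each level, together with a sufficiently aggressive choice of $r_n$. Conceptually the argument parallels the inductive construction used by Gao--Jackson--Krohne--Seward in \cite{GJKS} for their weakly orthogonal decompositions, and the hardest part is keeping track of the constants rather than any new idea.
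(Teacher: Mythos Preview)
Your inductive strategy---take asdim witnesses $V_n^0,\dots,V_n^d$ at a rapidly growing sequence of scales and round them against the previously built $U_m^i$---is exactly the construction from \cite[Lemma~4.6]{Borelasdim} that the paper is citing, so the overall shape is right. However, the specific ``adjoin if the piece meets $Y$, excise otherwise'' rule has a real gap where you assert that the cover property is inherited. A point $x\in V_n^j$ can lie in $B(Y,m)\setminus Y$ for some $G^{\leq m}$-component $Y$ of $U_m^j$ that happens to miss $V_n^j$ entirely; your rule then excises $x$ from $U_n^j$, and nothing forces $x$ into any other $U_n^{j'}$. There is also an order-of-operations problem: if one block $B(Y_1,m_1)$ is adjoined and an overlapping block $B(Y_2,m_2)$ is later excised, neither block ends up entirely inside or entirely outside $U_n^i$, so condition~(2) of Definition~\ref{def:rainbow_toast} is not guaranteed.

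The fix is to drop the excision and \emph{only} adjoin: define $U_n^i$ as the closure of $V_n^i$ under the rule ``whenever $B(Y,m)$ meets the current set for some $m<n$ and some $G^{\leq m}$-component $Y$ of $U_m^i$, absorb all of $B(Y,m)$.'' Then $U_n^i\supseteq V_n^i$, so the cover property is automatic, and each $B(Y,m)$ is by construction entirely in or entirely out of $U_n^i$, which yields condition~(2) as you argued. The nontrivial remaining point---the one you correctly flag as ``cascade'' bookkeeping---is that this saturation terminates with uniformly bounded pieces; this is exactly where the fast growth of $r_n$ relative to $\sum_{m<n}(D_m'+2m)$ is used, and is handled in \cite{Borelasdim}. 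Finally, the appeal to Proposition~\ref{prop:Delta_algebra} is unnecessary: closure of $\Bool_\Gamma(\mathcal A)$ under finite Boolean combinations and translates is immediate from the definition.
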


\begin{proof}
    One can follow proof of Lemma 4.6 from \cite{Borelasdim} while keeping track of the complexity of sets. See also the beginning of the proof of Lemma 10.5 in \cite{Borelasdim} which explicitly addresses the case where $\mathcal{A}$ is the collection of clopen sets. 
\end{proof}

The idea of the toast constructions in \cite{MNP} and \cite{MU}, which originates in \cite{BoykinJackson}, is to take a rainbow toast and choose for each component one of the toasts which covers it. The complexity of that choice is ultimately what prevents this construction from doing better than $\Bool(\boldSigma^0_2)$. Instead, we observe that a rainbow toast is quite close to a BGD. 

\begin{lemma}\label{lem:easy_BGD}
    Suppose $( U_n^i)$ is a rainbow toast on $G$ of dimension $d$. 
    For any $Q$, there is a sequence of indices $( r_n)_{ n \in \N}$ such that, letting $Y_n := \bigcup_i \partial_o U_{r_n}^i$, $( Y_n )$ is a BGD on $G$ with constants $d+2$ and $Q$.
\end{lemma}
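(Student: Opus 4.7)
The plan is to define $(r_n)$ by recursion so that $r_{n+1}$ dominates $Q$ and the $G$-diameter of the pieces of $U_{r_n}^i$. For each $n$, let $\Delta_n$ be a uniform bound on the $G$-diameter of the pieces of $U_n^i$ over all $i$ (so $\Delta_n \leq n \cdot D_n$, where $D_n$ is the $G^{\leq n}$-diameter bound coming from condition (1) of \Cref{def:rainbow_toast}); I will require $r_1 \geq 2$ and $r_{n+1} > \Delta_{r_n} + 2Q + 1$. Setting $Y_n := \bigcup_{i=0}^d \partial_o U_{r_n}^i$ as in the statement, I then verify each of the three BGD conditions.

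Conditions (1) and (2) should go quickly. For (2), the key observation is that any component $R$ of $G \res (X \setminus Y_n^\infty)$ is ``monochromatic'' at each level $k \geq n$: if $x, y \in R$ are $G$-adjacent with $x \in U_{r_k}^{i_x}$, then $y$ must also lie in $U_{r_k}^{i_x}$, since otherwise $y \in \partial_o U_{r_k}^{i_x} \subseteq Y_k \subseteq Y_n^\infty$, contradicting $y \in R$. Thus $R$ sits inside a single piece of $U_{r_n}^{j_n}$ for some color $j_n$, giving $\mathrm{diam}_G(R) \leq \Delta_{r_n}$. For (1), I would show that for each $x$ and each color $i$ the set $\{m : x \in \partial_o U_m^i\}$ is finite, by combining the separation property (2) of the rainbow toast with a pigeonhole on the finitely many $G$-neighbors of $x$: once some fixed neighbor of $x$ lies in $U_{m_1}^i$ with $m_1 \geq 2$, the separation property forces $x \notin \partial_o U_m^i$ for all $m > m_1$.

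The main work is (3), and it hinges on the following disjointness claim. Fix $R$, and for $k \geq n$ define $C_k := \{i : B(R, Q) \cap \partial_o U_{r_k}^i \neq \emptyset\}$, so that $K = \{k \geq n : C_k \neq \emptyset\}$. I claim that if $k < k'$ lie in $K$ and $r_k > \Delta_{r_n} + 2Q + 1$, then $C_k \cap C_{k'} = \emptyset$. Given $i \in C_k \cap C_{k'}$, choose witnesses $y, y' \in B(R, Q)$ with $y \in \partial_o U_{r_k}^i$ and $y' \in \partial_o U_{r_{k'}}^i$, and take a $G$-neighbor $z$ of $y$ lying in $U_{r_k}^i$; the separation property applied with indices $r_k < r_{k'}$ gives $\dist(z, \partial_o U_{r_{k'}}^i) \geq r_k$, hence $\dist(y, y') \geq r_k - 1$. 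On the other hand, triangulating through $R$ yields $\dist(y, y') \leq 2Q + \mathrm{diam}_G(R) \leq 2Q + \Delta_{r_n}$, contradicting the hypothesis on $r_k$.

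The growth condition on $(r_n)$ ensures $r_k > \Delta_{r_n} + 2Q + 1$ for every $k \geq n+1$, so the $C_k$ for $k \in K \cap \{n+1, n+2, \ldots\}$ are pairwise disjoint nonempty subsets of the $(d+1)$-element set $\{0, \ldots, d\}$. This gives at most $d+1$ such $k$, and adding the possibly present element $k = n$ yields $|K| \leq d+2$ as required. The main obstacle---and precisely the reason the bound is $d+2$ rather than $d+1$---is that the disjointness claim fails for $k = n$: at that level $R$'s containing piece has $G$-diameter already comparable to $r_n$, so the triangle-inequality argument no longer delivers a contradiction.
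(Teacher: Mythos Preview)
Your proof is correct and follows essentially the same approach as the paper: choose $r_n$ recursively so that $r_{n+1}$ exceeds both $r_n$ and $2Q$ plus the diameter bound at level $n$, prove the diameter bound for (2) via the observation that a component of $G\res(X\setminus Y_n^\infty)$ cannot cross $\partial_o U_{r_n}^i$ and hence lies in a single piece, and derive (3) by a color-pigeonhole on $\{0,\ldots,d\}$ using the separation property of the rainbow toast together with the triangle inequality through $R$. The only cosmetic point is that your recursion should also explicitly enforce $r_{n+1}>r_n$ (as the paper does with $r_{n+1}=\max(2Q+D_n,r_n)+1$) so that the inequality $r_k>\Delta_{r_n}+2Q+1$ indeed holds for \emph{all} $k\geq n+1$, not just $k=n+1$.
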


Note that in particular $Y_n \in \Bool_\Gamma(\{U_n^i \mid i \leq d, n \in \N\})$ for each $n$. 

\begin{proof}
    We will choose our $r_n$'s inductively, starting with $r_0 = 2$. Suppose we have chosen $r_0 < \ldots < r_n$. 

    \begin{claim}\label{c:Y_n_bdd}
        $G \res X \setminus Y_n$ has connected components with a uniformly bounded diameter, call it $D_n$.
    \end{claim}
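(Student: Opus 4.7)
The plan is to exploit the rainbow toast structure: although removing $Y_n = \bigcup_i \partial_o U_{r_n}^i$ might seem only to delete the ``thin skins'' around each $U_{r_n}^i$, it actually insulates the interior of each $U_{r_n}^i$ from the others. So each $G$-component $C$ of $G \res (X \setminus Y_n)$ must lie entirely inside a single $U_{r_n}^i$, and the asymptotic-dimension witness property at scale $r_n$ then supplies the required diameter bound.

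More precisely, first I would pick any vertex $x_0 \in C$. Since the sets $U_{r_n}^0, \ldots, U_{r_n}^d$ cover $X$ (this is part of their being an asdim witness), there is some $i$ with $x_0 \in U_{r_n}^i$. I would then claim by induction along paths in $C$ that every vertex of $C$ lies in $U_{r_n}^i$. Indeed, suppose $x \in C \cap U_{r_n}^i$ and $y$ is a $G$-neighbor of $x$ inside $C$. If $y$ were not in $U_{r_n}^i$, then $y$ would be adjacent to $x \in U_{r_n}^i$ while lying outside it, so $y \in \partial_o U_{r_n}^i \subseteq Y_n$; but $y \in C \subseteq X \setminus Y_n$, a contradiction. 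So $y \in U_{r_n}^i$, and the induction closes. Hence $C \subseteq U_{r_n}^i$.

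Now, since $C$ is $G$-connected and contained in $U_{r_n}^i$, it is also $G^{\leq r_n}$-connected and therefore contained in a single connected component of $G^{\leq r_n} \res U_{r_n}^i$. The asymptotic-dimension-$d$ witness hypothesis at scale $r_n$ provides a uniform bound $D$ on the $G^{\leq r_n}$-diameter of such components, which translates to a $G$-diameter bound of at most $r_n D$. Taking $D_n := r_n D$ and noting this bound does not depend on the choice of component $C$ or of index $i$ gives the uniform bound claimed.

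There is really no main obstacle here; the content of the claim is the simple ``insulation'' observation that exterior boundaries of the rainbow layers form a separator between distinct $U_{r_n}^i$ parts of any component, and everything else is bookkeeping with the definition of a rainbow toast. The only mild subtlety is remembering that the cover $U_{r_n}^0, \ldots, U_{r_n}^d$ need not be a partition, so a given component $C$ could a priori meet several $U_{r_n}^i$; but the induction above shows that once you pick any $i$ containing a single vertex of $C$, the whole of $C$ is trapped in that $U_{r_n}^i$.
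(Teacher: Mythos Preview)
Your proof is correct and follows essentially the same approach as the paper: both show that any component of $G \res (X \setminus Y_n)$ is trapped inside a single $U_{r_n}^i$ via the observation that a path leaving $U_{r_n}^i$ would have to pass through $\partial_o U_{r_n}^i \subseteq Y_n$, and then invoke the diameter bound coming from the asymptotic dimension witness. The only cosmetic difference is that the paper takes $D_n$ to be the $G$-diameter bound on components of $G^{\leq r_n} \res U_{r_n}^i$ directly, whereas you convert a $G^{\leq r_n}$-diameter bound into a $G$-diameter bound via the factor $r_n$; either reading of ``diameter'' gives a finite uniform bound, so this is immaterial.
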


    \begin{proof}
        $D_n$ will be the bound on the diameter of the connected components of $G^{\leq r_n} \res U_{r_n}^i$ for $0 \leq i \leq d$, which exists by definition of asymptotic dimension witness.
        Let $R$ be a component of $G \res (X \setminus Y_n)$ and let $x \in R$.
        Since asymptotic dimension witnesses are covers, there is some $i$ for which $x \in U_{r_n}^i$. Let $S$ be the component of $G \res U_{r_n}^i$ containing $x$. Then $S$ has diameter at most $D_n$. We claim $R \subseteq S$. Indeed, if we follow a path in $G \res (X \setminus Y_n)$ starting at $x$, we can never leave $U_{r_n}^i$, as the first time we do we will encounter a point of $\partial_o U_{r_n}^i \subseteq Y_n$.  
    \end{proof}

    We now choose $r_{n+1} = \max(2Q + D_n, r_n) + 1$. Let us check (1)-(3) in the definition of BGD. 

    (1): We claim each $x \in X$ is in at most $d+1$ $Y_n$'s, which suffices. If not, then there are $n < m$ and $i$ for which $x \in \partial_o U_{r_n}^i \cap \partial_o U_{r_m}^i$. Then since $x$ has a neighbor in $U_{r_n}$, we get $\dist(U_{r_n}, \partial_o U_{r_m}) \leq 1$, contradicting (2) from the definition of rainbow toast.

    (2): Immediate from the Claim, since $Y_n \subseteq Y_n^\infty$. Note that the diameter bound is the same $D_n$ used to choose $r_{n+1}$.

    (3): Let $R$ be a component of $G \res (X \setminus Y_{k}^\infty)$. By (2), the diameter of $R$ is at most $D_k$. We claim that $\{n > k \mid \dist(R, Y_n) \leq Q\}$ has cardinality at most $d+1$. If not, there are $k < n < m$ and $i$ for which $\dist(R, \partial_o U_{r_n}^i), \dist(R, \partial_o U_{r_m}^i) \leq Q$. Then, as in (1), 
    \[ \dist(U_{r_n}^i, \partial_o U_{r_m}^i) \leq 1 + \textnormal{diam}(R) + 2Q \leq r_{k+1} \leq r_n, \]
    contradicting (2) from the definition of rainbow toast. 
\end{proof}

Suppose $\Gamma \acts X$ is continuous and $\asdim_{\boldDelta^0_1}(X) = d$. We saw in the previous section that this holds when $X$ is 0-dimensional and $\Gamma$ has polynomial growth. Then each $Y_n$ in the lemma above will be clopen, and so the sets $Y_n^\infty$ from the definition of BGD will be open. The constructions in the next section will be local functions of the $Y_n$ and $Y_n^\infty$, and so will end up giving a toast with $\boldDelta^0_2$ layers, matching the construction from \cite{GJKS} for $\Z^d$. 

However, the torus is not 0-dimensional, and even if our $Y_n$'s are in $\Bool(\boldSigma^0_1)$, the $Y_n^\infty$ will in general be no simpler than $\boldSigma^0_2$. This is where our notion of pseudo-0-dimensionality comes in: By taking interiors at the right moment in the proof of Lemma \ref{lem:easy_BGD}, we can actually get each $Y_n$ to be open, and thus save a level of complexity when moving from $Y_n$ to $Y_n^\infty$. 

\begin{lemma}\label{lem:pc_bgd}
    Suppose $\Gamma \acts X$ is continuous and $( U_n^i )$ is a rainbow toast of dimension $d$ on $G$ with each $U_n^i$ pseudo-clopen. For any $Q$, there is a BGD $( X_n)$ on $G$ with constants $d+2$ and $Q$, with each $X_n$ open. 
\end{lemma}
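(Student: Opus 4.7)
The plan is to follow the proof of Lemma \ref{lem:easy_BGD} closely, but replace each layer $Y_n = \bigcup_{i=0}^{d} \partial_o U_{r_n}^i$ by its topological interior $X_n := \textnormal{int}(Y_n)$, which is open by construction. Since pseudo-clopenness is $\Gamma$-invariant and closed under finite Boolean combinations (Lemma \ref{lem:pc_algebra}), each $Y_n$ is pseudo-clopen with some constant $N_n$, so $Y_n \setminus X_n \subseteq \partial_\tau Y_n$ meets each orbit in at most $N_n$ points. The strategy is to argue that this orbit-wise small trimming preserves all three BGD properties, provided the indices $r_n$ are chosen carefully.

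The first step is to control the diameters of components of $G \res (X \setminus X_n)$. As in Claim \ref{c:Y_n_bdd}, components of $G \res (X \setminus Y_n)$ have diameter at most some $D_n$. A component of $G \res (X \setminus X_n) = G \res ((X \setminus Y_n) \cup (Y_n \setminus X_n))$ lies in a single orbit and is obtained by bridging at most $N_n + 1$ components of $G \res (X \setminus Y_n)$ through the $\leq N_n$ points of $Y_n \setminus X_n$ available in that orbit, yielding a bound $D'_n$ depending only on $D_n$ and $N_n$. With this in hand, I would define the $r_n$ inductively by $r_0 := 2$ and $r_{n+1} := \max(r_n + 1,\, D'_n + 2Q + 2)$.

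Properties (1) and (2) of the BGD then follow by essentially the same reasoning as in Lemma \ref{lem:easy_BGD}. For (1), since $X_n \subseteq Y_n$, if $x \in X_n \cap X_m$ with $n < m$ then $x \in \partial_o U_{r_n}^i \cap \partial_o U_{r_m}^j$ for some $i,j$, and the case $i = j$ would force $\dist(U_{r_n}^i, \partial_o U_{r_m}^i) \leq 1 < r_n$, contradicting rainbow toast property (2); hence each $x$ lies in at most $d+1$ of the $X_n$'s. For (2), each component of $G \res (X \setminus X_n^\infty)$ is contained in a component of $G \res (X \setminus X_n)$, hence has boundedly many vertices and so bounded diameter.

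The main obstacle is property (3), because the inclusion $X_n \subseteq Y_n$ makes $X \setminus X_n^\infty$ strictly larger than $X \setminus Y_n^\infty$, and a naive imitation of the original argument would require $r_n > D'_n + 2Q + 1$, a self-referential condition since $D'_n$ is itself computed from $r_n$. I would sidestep this by noting that whenever $|\{k \geq n : \dist(R, X_k) \leq Q\}| \geq d+3$, at least $d+2$ of the offending indices are strictly greater than $n$; pigeonholing over the $d+1$ possible colors $i$ in $X_k \subseteq \bigcup_i \partial_o U_{r_k}^i$ then produces two indices $n < k_j < k_l$ that share an $i$. The triangle inequality now yields $\dist_G(U_{r_{k_j}}^i, \partial_o U_{r_{k_l}}^i) \leq 1 + 2Q + \textnormal{diam}_G(R) \leq 1 + 2Q + D'_n < r_{n+1} \leq r_{k_j}$, contradicting rainbow toast property (2).
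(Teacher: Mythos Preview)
Your proposal is correct and matches the paper's proof essentially step for step: set $X_n = \textnormal{int}(Y_n)$, use pseudo-clopenness of $Y_n$ to upgrade the diameter bound for $G \res (X \setminus Y_n)$ to one for $G \res (X \setminus X_n)$, choose $r_{n+1}$ large relative to that new bound, and then verify (1)--(3) via the inclusion $X_n \subseteq Y_n$. Your worry about a ``self-referential condition'' is a non-issue---the argument only ever needs $r_{n+1}$ (not $r_n$) to dominate $D'_n + 2Q$, and your ``sidestep'' of restricting attention to indices strictly greater than $n$ before pigeonholing is exactly what the paper (and Lemma~\ref{lem:easy_BGD}) already does.
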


\begin{proof}
    The proof will be very similar to that of Lemma \ref{lem:easy_BGD}
    We will inductively define a sequence $r_{n}$ and let $Y_n := \bigcup_i \partial_o U_{r_n}^i$ and $X_n = \textnormal{int}(Y_n)$. We start with $r_0 = 2$. Suppose we have chosen $r_0 < \cdots < r_n$. 

    \begin{claim}
        $G \res (X \setminus X_n)$ has connected components with uniformly bounded diameter, call it $D_n$. 
    \end{claim}

    \begin{proof}
        By Claim \ref{c:Y_n_bdd}, there is a $D_n'$ bounding the diameter of components of $G \res (X \setminus Y_n)$. Note $Y_n$ is a local function of pseudo-clopen sets so it is pseudo-clopen, say as witnessed by $N \in \N$. Now fix a $\Gamma$-orbit $E$. Then $|E \cap (Y_n \setminus X_n)| \leq N$. Suppose $x,y \in E$ are in the same $G \res (X \setminus X_n)$-component, as witnessed by a $G$-path $x = x_0, \ldots, x_l = y$ avoiding $X_n$. This path also avoids $Y_n$, except for at most $N$ points, say $x_{i_j}$ where $i_1 < i_2 < \cdots < i_M$. Now for every $j$, the interval between and including $x_{i_j}$ and $x_{i_{j+1}}$ on our path has diameter at most $D_n' + 2$, and likewise for the intervals at the left and right ends. There are at most $M+1 \leq N+1$ such intervals. Thus 
        $$D_n = (N+1)(D_n' + 2)$$ is our desired bound. 
    \end{proof}

    Now we once again choose $r_{n+1} = \max(2Q + D_n,r_n) + 1$. Let us check (1) - (3) in the definition of BGD.

    (1): $\limsup Y_n = \emptyset$ as in the proof of Lemma \ref{lem:easy_BGD}, and since $X_n \subseteq Y_n$ the same is true for the $X_n$'s. 

    (2): Again, this is provided by the claim.

    (3): Let $R$ be a component of $G \res (X \setminus X_k^\infty)$. By (2), the diameter of $R$ is at most $D_k$. We claim that $S := \{n > k \mid \dist(R,X_n) \leq Q\}$ has cardinality at most $d+1$. The proof of (3) in Lemma \ref{lem:easy_BGD} shows that this bound applies to $S' := \{n > k \mid \dist(R,Y_n) \leq Q\}$, but since $X_n \subseteq Y_n$ for each $n$ we have $S \subseteq S'$ and are done. 
\end{proof}

\section{Construction of a toast from a BGD}\label{sec:toast-from-bgd}

Recall that $\Gamma \acts X$ is an action with Schreier graph $G$ with respect to some finite generating set $S$ for $\Gamma$.

The main theorem of this section is essentially Theorem 4.12 in \cite{GJKS}, except that we explicitly calculate the complexities of the sets involved.
Since our definition of a BGD drops the weak orthogonality condition from their paper and our definition of toast is a bit stronger, the proof also changes slightly but not meaningfully.
We provide the details here for completeness.

\begin{theorem}\label{thm:bgd-toast}
    Let $(X_n)_{n \in \N}$ be a BGD on $G$ with constants $P$ and $Q$, and choose $q \in \N$ so that $q(P+1) \leq Q$.
    Then, there is a $q$-toast $(T_n)_{n \in \N}$ on $G$ such that for each $n \in \N$, $T_n \in \boldDelta_\Gamma\left(\{X_n, X_n^{\infty}\mid n \in \N\} \right)$.
\end{theorem}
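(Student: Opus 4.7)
My plan is to follow closely the construction of Theorem 4.12 in \cite{GJKS}, while tracking the complexity of all the sets that arise so as to verify the $\boldDelta_\Gamma$ bound.

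The central combinatorial mechanism is a pigeonhole argument that exploits the inequality $q(P+1) \leq Q$. The interval $[0, Q]$ admits $P + 1$ disjoint sub-intervals of length $q$, namely $[iq, (i+1)q)$ for $i = 0, 1, \ldots, P$. For each component $R$ of $G \res (X \setminus X_n^{\infty})$ at its first appearing stage $n$ (that is, the smallest $n$ with $R$ arising as a component), BGD property (3) says that at most $P$ values $k \geq n$ satisfy $\dist(R, X_k) \leq Q$, while all other $k \geq n$ give $\dist(R, X_k) > Q$. By pigeonhole, at least one sub-interval $[\tau q, (\tau+1) q)$ contains no distance $\dist(R, X_k)$ for any $k \geq n$. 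This produces a ``safe annulus'' of width $q$ around $R$ which is disjoint from $X_n^{\infty}$ and hence from every $X_k$ with $k \geq n$; geometrically, this annulus is the buffer that will separate pieces across consecutive layers.

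First I would use this pigeonhole to assign each component $R$ a type $\tau(R) \in \{0, \ldots, P\}$. Next, following \cite{GJKS}, I would build the toast layers $T_m$ iteratively from thickened components, where the thickening radius (controlled in terms of the type and the stage) is chosen so that the safe annulus supplies the required $q$-buffer between a lower-layer piece and the inner boundary of any higher-layer piece containing or neighboring it. Then I would verify the three $q$-toast properties: (1) covering $\bigcup_m T_m = X$ holds since every point is eventually in some BGD component by $\bigcap_n X_n^{\infty} = \emptyset$; (2) uniform diameter bounds on pieces of each layer follow from the BGD diameter bound $D_n$ together with a bounded thickening radius of at most $Pq$; (3) the required separation $\dist(T_n, \partial_i T_m) \geq q$ for $n < m$ is guaranteed by the safe annulus.

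For the complexity analysis, I would verify that each $T_m$ lies in $\boldDelta_\Gamma(\{X_n, X_n^{\infty} \mid n \in \N\})$. Membership of a point in a specific BGD component is a local function of the family $\{X_n^{\infty}\}$, since the components of $G \res (X \setminus X_n^{\infty})$ are determined by connectivity in that graph. The type $\tau(R)$ depends only on which $X_k$ lie close to $R$, and is therefore a local function of $\{X_n\}$ after identifying the (at most $P$) close indices. Thickening is itself a local operation. Taking a countable union over first appearing stages $n$ of such local Boolean combinations places $T_m$ and its complement into $\boldSigma_\Gamma(\{X_n, X_n^{\infty}\})$, hence $T_m \in \boldDelta_\Gamma(\{X_n, X_n^{\infty}\})$.

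The main obstacle I anticipate is verifying property (3) carefully, since pieces at layer $m$ can arise from components with different first appearing stages and different types, and the $q$-separation must hold uniformly for all layer pairs. In addition, our notion of BGD drops the ``weak orthogonality'' condition used in the WODs of \cite{GJKS}, so some of their separation arguments need minor adaptation; as the discussion preceding the theorem indicates, these changes are not meaningful, and both the construction and the complexity bookkeeping should carry through with a modest amount of case analysis.
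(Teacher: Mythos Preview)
Your high-level plan matches the paper, but the concrete mechanism you sketch has a gap and differs from what is actually done. The paper does not thicken components; it \emph{shrinks} them. Given a component $R$ at its first stage $s(R)=n$, one lists $L(R)=\{l_1<\cdots<l_p\}=\{l\geq n:\dist(R,X_l)\leq Q\}$ (so $p\leq P$) and sets $\Int(R)=\{x\in R:\dist(x,X_{l_i})\geq q(P-p+i)\text{ for all }i\}$; the thresholds are graduated, not a single pigeonhole choice. One then keeps only the \emph{maximal} components---those for which the amplitude $a(R)=\max L(R)$ strictly increases at the successor---and puts $T_n=\bigcup\{\Int(R):R\in\mathcal{M},\ s(R)=n\}$. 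Property (3) of the toast follows because for maximal $R\subsetneq R'$ the set $L(R')$ acquires the new top element $a(R')\notin L(R)$, which shifts the threshold comparison by exactly one step of size $q$. Your pigeonhole assertion that the annulus at radii in $[\tau q,(\tau+1)q)$ is ``disjoint from $X_n^\infty$'' is not correct: $\dist(R,X_k)\notin[\tau q,(\tau+1)q)$ only constrains the \emph{nearest} point of $X_k$, not all of $X_k$; and thickening $R$ outward into $X_n^\infty$ would let the piece meet $X_k$ for unboundedly many $k$, undermining both property (3) and the locality needed for the complexity bound.

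For the $\boldDelta_\Gamma$ claim, the paper's argument hinges on a further stratification by amplitude: one writes $T_n=\bigsqcup_{k\geq n-1}T_n^k$, where $T_n^k$ collects the $\Int(R)$ with $s(R)=n$ and $a(R)=k$. For fixed $n,k$ the predicates ``$s(R)=n$'', ``$a(R)=k$'', ``$R$ maximal'', and ``$x\in\Int(R)$'' are all $(D_{k+1}+Q)$-local in the finitely many sets $X_l,X_l^\infty$ with $n-1\leq l\leq k+1$, so each $T_n^k\in\Bool_\Gamma(\{X_l,X_l^\infty\})$. The countable union over $k$ gives $T_n\in\boldSigma_\Gamma$, and an analogous decomposition of the complement via $X\setminus S_n$ and $A_n^k\setminus T_n^k$ gives the other half of $\boldDelta_\Gamma$. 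Your sketch gestures at this but omits the amplitude-indexed decomposition, which is exactly what makes each summand a \emph{finite} Boolean combination rather than one depending on all $X_k$ at once.
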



At the end of this section we will show how this Theorem implies our main existence results about toasts. 
Fix $q,P,Q$ as in the theorem statement, and suppose $(X_n)_{n \in \N}$ is a BGD with constants $P$ and $Q$. 

For each $n \in \N$, let $\comp(n)$ denote the set of connected components of  $G \res (X \setminus X_n^\infty)$.
Let $\comp = \bigcup_{n \in \N} \comp(n)$.
Note that for $n \leq n'$, $X \setminus X_n^\infty \subseteq X \setminus X_{n'}^\infty$, so their components must also be nested.
That is, if $R, R' \in \comp$ and $R \cap R' \neq \emptyset$, then one of them is contained in the other.
We will now construct a $q$-toast from $\comp$ by only selecting some subsets of some of the components, but first we need to get through some definitions and properties of $\comp$.

\begin{definition}
    Let $R \in \comp$. 
    The \emph{stage} of $R$ is $s(R) = \min\{n \mid R \in \comp(n)\}$. 
    The \emph{amplitude} of $R$ is $a(R) = \max\{n \mid\dist(R,X_n) \leq Q \}$, where we take $\max \emptyset = - 1$.
    $R$ is called \textit{terminal} if it is a connected component of $G$. Let $\Succ(R)$ denote the successor of $R$ in the poset $(\comp, \subseteq)$, if it exists. 
\end{definition}

\begin{lemma}\label{lem:finite_comp}
    $\Succ(R)$ exists if and only if $R$ is non-terminal.
\end{lemma}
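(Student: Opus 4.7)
The plan is as follows. For each $R \in \comp$ and each $m \geq s(R)$, since $X_m^\infty \subseteq X_{s(R)}^\infty$ we have $R \subseteq X \setminus X_m^\infty$, so let $R_m$ denote the unique connected component of $G \res (X \setminus X_m^\infty)$ containing $R$ (unique since $R$ is itself connected). The sequence $(R_m)_{m \geq s(R)}$ is nondecreasing under $\subseteq$, and $R \in \comp(m)$ is equivalent to $R_m = R$. Any element of $\comp$ strictly containing $R$ must be of the form $R_n$ for some $n$ with $R_n \supsetneq R$, so the natural candidate for $\Succ(R)$ is $R_{m^*}$ where $m^* := \min\{m \geq s(R) : R_m \supsetneq R\}$, provided this set is nonempty.

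For the ``terminal implies no successor'' direction, I will use the fact that every $R' \in \comp$ is connected in $G$ and hence lies in a single $G$-component. If $R$ is terminal and $R \subseteq R'$, this $G$-component must be $R$ itself, forcing $R' = R$. Hence no strict superset of $R$ in $\comp$ exists, and $\Succ(R)$ does not exist.

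For the ``non-terminal implies successor exists'' direction, I use finiteness of $R$ (from (2) of the BGD definition together with local finiteness of $G$) to pick a $G$-neighbor $y \in N(R) \setminus R$. Property (1) of a BGD gives some $m$ with $y \notin X_m^\infty$, and at the minimal such $m$ --- necessarily exceeding $s(R)$ --- the vertex $y$ is adjacent to $R$ inside $G \res (X \setminus X_m^\infty)$, so $R_m \supsetneq R$. Thus $m^*$ is well-defined and $R_{m^*} \in \comp(m^*)$ strictly contains $R$. I then verify it is the \emph{immediate} successor by observing that any other $R' \in \comp$ strictly containing $R$ has the form $R_n$, which must satisfy $n \geq m^*$ by definition of $m^*$, whence $R_{m^*} \subseteq R_n = R'$ by monotonicity of $(R_m)$.

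No step poses real difficulty. The only subtlety is bookkeeping: one must carefully track the direction of inclusion $X_{m'}^\infty \subseteq X_m^\infty$ for $m \leq m'$ and invoke property (1) of the BGD at the right moment to guarantee $m^*$ is finite.
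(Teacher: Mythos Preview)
Your proof is correct and follows essentially the same approach as the paper: both arguments pick a point in $\partial_o R$ (nonempty since $R$ is non-terminal), use condition (1) of the BGD to find some level $m$ at which that point lies outside $X_m^\infty$, and conclude that the $\comp(m)$-component containing $R$ is strictly larger. The paper then appeals abstractly to finiteness and nestedness to pass from ``some strict superset exists'' to ``a minimal one exists'', whereas you explicitly construct the successor as $R_{m^*}$ and verify minimality via the monotone sequence $(R_m)$; the two arguments are minor variants of one another. (A small remark: your invocation of finiteness of $R$ to pick $y \in N(R)\setminus R$ is unnecessary---non-terminality alone gives $\partial_o R \neq \emptyset$---but this does no harm.)
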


\begin{proof}
    The elements of $\comp$ are finite by (2) in the definition of BGD (\Cref{def:BGD}). By this along with nestedness, an element of $\comp$ has a successor if and only if there is some other element of $\comp$ properly containing it.
    
    Clearly if $R$ is terminal then no other members of $\comp$ properly contain $R$. On the other hand, if $R$ is non terminal of then $\partial_o R \neq \emptyset$. By (1) in the definition of BGD, there must be some $k > s(R)$ such that $\partial_oR \not\subseteq X_k^\infty$. Then $R$ is properly contained in a member of $\comp(k)$. 
\end{proof}

\begin{lemma}\cite[Lemma~4.5]{GJKS}\label{lem:stage-amp}
    \begin{enumerate}
        \item The stage and amplitude are always finite. 
        \item If $R \subsetneq R'$ then $s(R) < s(R')$ and $a(R) \leq a(R')$.
        \item If $R$ is non terminal then $s(R) \leq a(R)$. If $R$ is terminal then $a(R) = s(R) - 1$.
        \item If $R$ is non terminal then $s(\Succ(R)) \leq a(R) + 1$. 
    \end{enumerate}
\end{lemma}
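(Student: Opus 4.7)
The plan is to check the four parts in order, each being an elementary consequence of the BGD axioms together with the fact that elements of $\comp$ are nested by containment. For (1), the stage is the minimum of a nonempty subset of $\N$, hence finite. For the amplitude, BGD axiom (3) applied to $R \in \comp(s(R))$ bounds the size of $\{k \geq s(R) : \dist(R,X_k) \leq Q\}$ by $P$, and only finitely many $k < s(R)$ exist, so the set defining $a(R)$ is finite (possibly empty, giving $a(R) = -1$).

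For (2), suppose $R \subsetneq R'$. The amplitude inequality follows immediately from $R \subseteq R'$, which gives $\dist(R',X_n) \leq \dist(R,X_n)$ for every $n$. For the stage inequality I would argue by contradiction: if $s(R) \geq s(R')$ then $X_{s(R)}^\infty \subseteq X_{s(R')}^\infty$, so $R'$ is a $G$-connected subset of $X \setminus X_{s(R)}^\infty$ containing $R$, and maximality of $R$ as a component of $G \res (X \setminus X_{s(R)}^\infty)$ forces $R' \subseteq R$, contradicting $R \subsetneq R'$.

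For (3), if $R$ is non-terminal then $\partial_o R \neq \emptyset$, and any $y \in \partial_o R$ must lie in $X_{s(R)}^\infty$ (otherwise the component $R$ of $G \res (X \setminus X_{s(R)}^\infty)$ would extend through $y$), so $y \in X_k$ for some $k \geq s(R)$; this gives $\dist(R,X_k) \leq 1 \leq Q$ and hence $a(R) \geq k \geq s(R)$. When $R$ is terminal it is a full $G$-component with no outgoing edges, so $\dist(R,X_n) = \infty$ for every $n$ with $R \cap X_n = \emptyset$, and for $n \geq s(R)$ one has $R \cap X_n^\infty = \emptyset$, so these $n$ contribute nothing to the amplitude set. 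The minimality of $s(R)$ forces $R \cap X_{s(R)-1}^\infty \neq \emptyset$, and since $R \cap X_{s(R)}^\infty = \emptyset$, this intersection must land in $X_{s(R)-1}$ itself, giving $\dist(R,X_{s(R)-1}) = 0$ and hence $a(R) = s(R)-1$.

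For (4), set $m = a(R)+1$ and let $R^*$ be the $G \res (X \setminus X_m^\infty)$-component containing $R$, which is well-defined since (3) gives $m > a(R) \geq s(R)$ and so $X_m^\infty \subseteq X_{s(R)}^\infty$. For any $y \in \partial_o R$ and any $k'$ with $y \in X_{k'}$, we have $\dist(R,X_{k'}) \leq 1 \leq Q$, so $k' \leq a(R) < m$, forcing $y \notin X_m^\infty$ and hence $y \in R^* \setminus R$. Thus $R \subsetneq R^*$; by \Cref{lem:finite_comp}, $\Succ(R)$ exists, and by nestedness of $\comp$ it satisfies $\Succ(R) \subseteq R^*$, so (2) yields $s(\Succ(R)) \leq s(R^*) \leq m = a(R)+1$. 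The most delicate step will be the terminal case of (3), where the lower bound on the amplitude is squeezed out of minimality of the stage rather than any BGD axiom; everything else is routine bookkeeping with the monotonicities provided by distance and by the family $(X_n^\infty)$.
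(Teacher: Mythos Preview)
Your proof is correct and follows essentially the same approach as the paper's. Your argument for (4) is in fact slightly more careful than the paper's (which reuses the index $k$ from (3) without noting one should take the maximal such $k$), while in the terminal case of (3) you should technically treat $s(R)=0$ separately as the paper does; both are minor points.
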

\begin{proof}
     The amplitude is finite by (3) in the definition of BGD, while for the stage there is nothing to prove. 
     If $R \subseteq R'$ have the same stage, say $n$, then they are both connected components of $G \res (X \setminus X_n^\infty)$. Then since they are not disjoint, they must be equal. The inequality $a(R) \leq a(R')$ is immediate from the definition. 

     For (3), if $R$ is non terminal then as in the previous proof we have $\emptyset \neq \partial_oR \subseteq X_{s(R)}^\infty$. Thus there is some $k \geq s(R)$ for which $X_k$ meets $\partial_o R$, and then $k \leq a(R)$ by definition of amplitude. 
     On the other hand if $R$ is terminal then $\dist(R,X_{s(R)}^\infty) = \infty$, while if $s(R) > 0$ then $X_{s(R) - 1}$ meets $R$ by definition of stage. 

    For (4), Let $k$ be as in the previous paragraph. Then $R$ is contained in a strictly larger element of $\comp(k+1)$, so $s(\Succ(R)) \leq k +1 \leq a(R) + 1$. 

\end{proof}


\begin{definition}
    For $R \in \comp$,
    We say $R$ is \emph{maximal} if it is not properly contained by any member of $\comp$ with the same amplitude. Equivalently, $R$ is terminal or $a(R) < a(\Succ(R))$.
    Let $\M$ denote the collection of all maximal components.
\end{definition}

\begin{lemma}\label{lem:max-comp}
    Every $R \in \comp$ is contained in some maximal $R' \in \M$. 
\end{lemma}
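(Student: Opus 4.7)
My plan is to build an increasing chain of components starting from $R$ using the $\Succ$ operation and show that this chain must hit a maximal element, either by terminating at a terminal component or by exhibiting a strict jump in amplitude.

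Concretely, I would define $R_0 = R$ and, so long as $R_i$ is non-terminal, set $R_{i+1} = \Succ(R_i)$, which exists by Lemma \ref{lem:finite_comp}. If the process ever halts at a terminal $R_i$, then $R_i \in \M$ by definition of maximality and $R \subseteq R_i$, so we are done. So the interesting case is when the chain is infinite: $R_0 \subsetneq R_1 \subsetneq R_2 \subsetneq \cdots$, with each $R_i$ non-terminal.

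In that case the key is a contradiction via Lemma \ref{lem:stage-amp}. Suppose for contradiction that no $R_i$ is maximal. Since each $R_i$ is non-terminal, non-maximality of $R_i$ gives $a(R_i) = a(\Succ(R_i)) = a(R_{i+1})$. Hence the amplitudes are constant along the chain, say $a(R_i) = a$ for all $i$. But Lemma \ref{lem:stage-amp}(4) then yields $s(R_{i+1}) \leq a(R_i) + 1 = a + 1$ for every $i$, so the stages are bounded by $a+1$. On the other hand, Lemma \ref{lem:stage-amp}(2) (applied to $R_i \subsetneq R_{i+1}$) says the stages $s(R_i)$ are strictly increasing natural numbers, which is impossible. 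Therefore some $R_i$ along the chain is maximal, and it contains $R$.

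The main (and really only) obstacle is making sure the pieces of Lemma \ref{lem:stage-amp} line up correctly: that $\Succ$ actually increases the stage strictly, and that the bound $s(\Succ(R_i)) \leq a(R_i) + 1$ applies in the non-terminal case, both of which are recorded in that lemma. Everything else is a short pigeonhole argument on $\N$, so I do not anticipate any technical subtleties.
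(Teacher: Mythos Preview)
Your proof is correct and is essentially the same as the paper's: both iterate $\Succ$ starting from $R$, observe that if no iterate is maximal then the amplitudes are constant while the stages strictly increase, and derive a contradiction with Lemma~\ref{lem:stage-amp}. The only difference is cosmetic---you invoke part~(4) to bound the stages by $a+1$, whereas the paper phrases it as the stage eventually exceeding $a(\Succ^n(R))+1$.
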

\begin{proof}
    If not, then $a(\Succ(R)) = a(\Succ^2(R)) = \dots$, while $s(R) < s(\Succ(R)) < s(\Succ^2(R)) < \dots$, so for some $n$ we will have $s(\Succ^n(R)) > a(\Succ^n(R)) + 1$, contradicting \Cref{lem:stage-amp}.
\end{proof}

It turns out that $\mathcal{M}$ already enjoys many of the properties of the collection of pieces in a toast. The last step in the proof is to show that by shrinking the elements of $\mathcal{M}$ in an appropriate way, we can obtain a toast on the nose.

\begin{definition}
    For $R \in \comp$ we let $L(R) := \{l \geq s(R) \mid \dist(R,X_l) \leq Q\}$.
\end{definition}

\begin{definition}
    Let $R \in \comp$, and let $s(R) \leq l_1 < l_2 < \dots < l_p = a(R)$ enumerate $L(R)$.
    Define
    \begin{align*}
        \Int(R) &= \{x \in R\mid \dist(x, X_{l_i}) \geq q(P-p+i), \text{ for all } 1\leq i \leq p \}.
    \end{align*}
\end{definition}

Note $L(R) = \emptyset$ if and only if $R$ is terminal, and in this case $\Int(R) = R$. 

\begin{lemma}\cite[Lemma~4.6]{GJKS}\label{lem:int}
    Let $R \in \comp$.
    \begin{enumerate}
        \item $\{x \in R\mid B(x,Q) \subseteq R\} \subseteq \Int(R)$.
        \item $\dist(\Int(R), X \setminus R) \geq q$.
        \item If $R' \in \comp$ with $R \cap R' = \emptyset$, then $\dist(\Int(R), \Int(R')) \geq 2q$.
        \item If $R' \in \comp$ with $R \subseteq R'$, then $\Int(R) \subseteq \Int(R')$.
        \item If $R \in \M$, $R' \in \comp$, and $R \subsetneq R'$, then $\dist(\Int(R), X \setminus \Int(R')) \geq q$.
    \end{enumerate}
\end{lemma}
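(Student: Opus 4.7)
My plan is to prove (1)--(5) in order, relying throughout on two recurring techniques: the inequality $q(P+1) \leq Q$ from the hypothesis (which forces distances from $\Int(R)$ to be visible in the defining $X_{l_i}$), and a shortest-path analysis that locates exit points from $R$ inside $\partial_o R$.

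For (1), I use that $X_{l_i} \cap R = \emptyset$ since $l_i \geq s(R)$ implies $X_{l_i} \subseteq X_{s(R)}^\infty$. Thus $B(x, Q) \subseteq R$ forces $\dist(x, X_{l_i}) > Q \geq q(P+1) > q(P-p+i)$ for all $i$, giving $x \in \Int(R)$. For (2), I argue by contrapositive: if $x \in \Int(R)$ and $z \in X \setminus R$ had $\dist(x, z) < q$, the first vertex $v$ on a shortest $x$--$z$ path outside $R$ lies in $\partial_o R$, hence in some $X_k$ with $k \geq s(R)$ and $\dist(R, X_k) \leq 1$, i.e., $k = l_i$. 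Then $\dist(x, X_{l_i}) \leq \dist(x, v) < q \leq q(P-p+i)$ contradicts $x \in \Int(R)$, since $P-p+i \geq 1$. For (3), the same strategy extends: let $v_{i^*}$ be the last $R$-vertex on a shortest $x$--$y$ path and $v_{j^{**}}$ the first subsequent $R'$-vertex, and apply (2) symmetrically at $v_{i^*+1}$ and $v_{j^{**}-1}$ to force the path length to be at least $2q$. The degenerate case $j^{**} = i^*+1$ is impossible because a direct $R \to R'$ transition would place $v_{i^*}$ both in $R$ and in $\partial_o R' \subseteq X_{s(R')}^\infty$, contradicting $R \subseteq X \setminus X_{s(R)}^\infty$ (WLOG $s(R) \leq s(R')$).

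For (4), I first invoke Lemma \ref{lem:stage-amp} to obtain $s(R) < s(R')$, then note that $L(R) \cap [s(R'), \infty) \subseteq L(R')$ because $R \subseteq R'$ implies $\dist(R', X_l) \leq \dist(R, X_l)$. Enumerating $L(R) = \{l_1 < \cdots < l_p\}$ and $L(R') = \{l'_1 < \cdots < l'_{p'}\}$ and letting $t = |L(R) \cap [0, s(R'))|$, I verify the $\Int(R')$ condition for $x \in \Int(R) \subseteq R'$ by cases on each $l'_j$: if $l'_j \notin L(R)$ then $\dist(R, X_{l'_j}) > Q$ gives $\dist(x, X_{l'_j}) > q(P+1) > q(P-p'+j)$; if $l'_j = l_i \in L(R)$, a rank count (using the inclusion above) yields $j \leq i + (p' - p)$, so $q(P-p+i) \geq q(P-p'+j)$ and the bound transfers directly from $\Int(R)$.

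The main obstacle is (5), since we must control distances to $X \setminus \Int(R')$ rather than just $X \setminus R'$. The key new input is maximality: $R \in \M$ forces $l'_{p'} = a(R') > a(R) = l_p$, so $l'_{p'} \in L(R') \setminus L(R)$ is strictly greater than every $l_i \in L(R)$. Feeding this back into the rank count from (4) sharpens $j \leq i + (p' - p)$ to $j \leq i + (p' - p) - 1$ whenever $l'_j = l_i$, which upgrades $q(P-p+i) \geq q(P-p'+j)$ to $q(P-p+i) \geq q(P-p'+j) + q$. For $z \in X \setminus \Int(R')$ with $\dist(x, z) < q$, a case analysis---$z \in X \setminus R'$, or $z \in R' \setminus \Int(R')$ with witness $l'_j \notin L(R)$, or with witness $l'_j = l_i \in L(R)$---yields a contradiction in each case, the last requiring precisely the maximality-sharpened inequality. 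Carefully tracking which inequality applies in which case is what makes (5) delicate.
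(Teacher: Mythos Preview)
Your proof is correct and follows essentially the same approach as the paper's. The paper argues (4) and (5) by contrapositive (assuming $x \notin \Int(R')$ and showing $x \notin \Int(R)$) while you argue forward, but the core inequality $p-i \leq p'-j$ (sharpened to strict under maximality) via the inclusion $\{l \in L(R) : l > l_i\} \subseteq \{l' \in L(R') : l' > l_i\}$ is identical. For (3) the paper simply says ``immediate from (2)''; your explicit path argument with the degenerate-case exclusion is a valid unpacking of that remark. (The variable $t$ you introduce in (4) is never used and can be dropped.)
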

\begin{proof}~
\begin{enumerate}[(1)]
    \item This follows from the facts that $q(P+1) \leq Q$ and $R \cap X_{l_i} = \emptyset$ for each $i=1, \dots, p$.
    \item It is equivalent to show that $\dist(\Int(R), X_{s(R)}^\infty) \geq q$.
    Suppose $x \in R$ and $\dist(x, X_{s(R)}^\infty) < q$.
    Then $\dist(x, X_{l_i}) < q$ for some $i$.
    Now $p \leq P$ by definition of $P$, 
    so $q \leq q(P - p + 1) \leq q(P - p + i)$,
    so $x \not\in\Int(R)$. 
    \item This is immediate from (2).
    \item 
    
    Let $s(R')\leq l_1' < \dots < l_{p'}' = a(R')$ enumerate $L(R')$.
    Suppose that $x \in R$ but $x \notin \Int(R')$.
    Then there is some $1 \leq i' \leq p'$ with $\dist(x,X_{l_{i'}'}) < q(P-p'+i') \leq Q$, 
    so since $x \in R$, $\dist(R, X_{l_{i'}'}) \leq Q)$.
    Since $s(R) \leq s(R')$, this implies that $l_{i'}' = l_i$ for some $1 \leq i \leq p$.
    
    We will show that $p - i \leq p' -i'$. This will imply that $\dist(x, X_{l_i}) = \dist(x, X_{l_{i'}'}) \leq q(P-p'+i') \leq q(P-p+i)$, and so $x \not\in \Int(R)$ as desired.
    For this, note that $p-i$ is the cardinality of the set 
    $L(R) \cap \{l \mid l > l_i\} = \{ l > l_i \mid \dist(R, X_l) \leq Q\}$, 
    and similarly $p'-i'$ is the cardinality of $L(R') \cap \{l \mid l > l_{i}\}= \{ l' > l_{i'}' \mid \dist(R', X_{l'}) \leq Q\}$.
    Since $R \subseteq R'$, the former set is a subset of the latter, which proves the inequality.
    
    \item The proof is similar to that of (4), and we carry over the notation. 
    Suppose $x \in \Int(R)$ and $y \in X \setminus \Int(R')$ with $\dist(x,y) < q$. By (2) we have $y \in R$ so $y \in R'$. 
    Thus there is some $i'$ such that $\dist(y,X_{l_{i'}'}) < q(P-p'+i')$.
    Then, $\dist(R, X_{l_{i'}'}) \leq \dist(x,X_{l_{i'}'}) + \dist(x,y) \leq  q(P-p'+i' +1) \leq Q$, 
    so again $l_{i'}' = l_i$ for some $i$. 

    Note that $R$ and $R'$ are non terminal since $l_i$ witnesses that $L(R),L(R') \neq \emptyset$. In particular we have $a(R') > a(R)$, so $a(R') \in L(R') \setminus L(R)$.
    
    In the analysis from (4), we still have 
    $L(R) \cap \{l \mid l > l_i\} \subseteq L(R') \cap \{l \mid l > l_i\}$
    . Moreover, the previous paragraph shows that inclusion is strict as witnessed by $a(R')$, so we get a strict inequality $p - i < p'-i'$. 
    Thus $\dist(x,X_{l_i}) < \dist(x,y) + q(P-p'+i') \leq q(P - p' + i' + 1) \leq q(P - p + i)$, 
    so $x \not\in \Int(R)$, a contradiction. 
\end{enumerate}    
\end{proof}

\begin{proof}[Proof of \Cref{thm:bgd-toast}]
 We will show that the sets
 \begin{align*}
     T_n &= \bigcup\{ \Int(R)\mid R \in \M, s(R) = n\}
 \end{align*}
 form a $q$-toast with the desired complexity.

For (1) in the definition of toast we need to show that $\bigcup_{n \in \N}T_n = X$.
For any $x \in X$, $B(x,Q)$ is finite, so by (1) in the definition of a BGD, there is some $n$ for which $B(x,Q) \cap X_n^{\infty} = \emptyset$.
Then, there is some $R \in \Comp(n)$ such that $B(x,Q) \subseteq R$, and by \Cref{lem:max-comp} there is some $R' \in \M$ such that $B(x,Q) \subseteq R \subseteq R'$.
By \Cref{lem:int}(1), $x \in \Int(R')$.
So, $x \in T_{s(R')}$.

For (2), we need to show that the components of each $G^{\leq q} \res T_n$ have uniformly bounded diameter.
Let $C$ be such a component. By \Cref{lem:int}(3), the sets $\Int(R)$ for $R \in s\inv(n) \cap \mathcal{M}$ all lie in different connected components of $G^{\leq q} \res T_n$, so $C$ is contained in one of them. Thus $D_n$ from (2) in the definition of BGD is also a bound on the diameter of $C$.

For (3), suppose $n < m$, $x \in T_n$, $y \in T_m$, and $\dist(x,y) < q$. We need to show $y \not\in \partial_i T_m$. 
That is, $N(y) \subseteq T_m$.
Let $R \in s\inv(n) \cap \M, R' \in s\inv(m) \cap M$ with $x \in \Int(R)$, $y \in \Int(R')$. By \Cref{lem:int}(3), $R$ and $R'$ are not disjoint, so by nestedness and \Cref{lem:stage-amp} we have $R \subsetneq R'$. Then by \Cref{lem:int}(5) we have $B(R,q) \subseteq \Int(R')$. Now we are done since $N(y) \subseteq B(x,q)$. 


It remains to show that $T_n \in \boldDelta\left(\{X_m, X_m^\infty\mid m \in \N\} \right)$.
We define some auxiliary sets for the proof. 
For $n \in \N$, $k \geq n-1$, we define:
\begin{align*}
    S_n &= \bigcup\{R \mid s(R) = n \}\\
    A_n^k &= \bigcup\{R \mid s(R) = n, a(R) = k\}\\
    T_n^k &= T_n \cap A_n^k = \bigcup\{\Int(R) \mid R \in \M,  s(R) =n, a(R) = k\}. 
\end{align*}
Observe that for each $n \leq k$, $T_n^k \subseteq A_n^k \subseteq S_n$.
Furthermore,
$T_n = \bigsqcup_{k \geq n}T_n^k$ and $S_n = \bigsqcup_{k \geq n}A_n^k$.
This implies that $X \setminus T_n = (X \setminus S_n) \cup \bigsqcup_{k \geq n}(A_n^k \setminus T_n^k)$.
Therefore the following claim suffices.

\begin{claim}\label{claim:sn_ank_tnk}
    Each $S_n, A_n^k$, and $T_n^k$ is in $\Bool_\Gamma(\{X_m,X_m^\infty \mid m \in \N\})$.
\end{claim}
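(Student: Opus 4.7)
The main idea is that condition (2) of the BGD definition gives a uniform bound $D_n$ on the diameters of components of $G \res (X \setminus X_n^\infty)$, so the component $C_n(x)$ of $G \res (X \setminus X_n^\infty)$ containing a point $x \in X \setminus X_n^\infty$ is contained in the ball $B(x, D_n)$. Consequently the stage, amplitude, interior, and (when it exists) the successor of $C_n(x)$ are all determined by data in a bounded-radius neighborhood of $x$, involving only finitely many of the sets $X_m, X_m^\infty$. Every such bounded-radius local condition on finitely many sets is a finite boolean combination of translates of those sets, so it lies in $\Bool_\Gamma(\{X_m, X_m^\infty \mid m \in \N\})$. The main obstacle is thus to ensure that only finitely many of the $X_m, X_m^\infty$ enter the local characterization of each individual $(n,k)$-instance, and this is precisely what \Cref{lem:stage-amp} delivers.

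For $S_n$: since $\partial_o C_n(x) \subseteq X_n^\infty \subseteq X_{n-1}^\infty$, if $C_n(x) \cap X_{n-1} = \emptyset$ then $C_n(x) \subseteq X \setminus X_{n-1}^\infty$ is also a component there, contradicting $s(C_n(x)) = n$. Hence for $n \geq 1$, $x \in S_n$ iff $x \notin X_n^\infty$ and there is a $G$-path of length at most $D_n$ from $x$ to $X_{n-1}$ staying in $X \setminus X_n^\infty$; for $n = 0$, $S_0 = X \setminus X_0^\infty$. For $A_n^k$: once $x \in S_n$ is verified, $a(C_n(x)) = k$ is equivalent to the conjunction of $\dist(C_n(x), X_k) \leq Q$ and $\dist(C_n(x), X_{k+1}^\infty) > Q$, both decidable in $B(x, D_n + Q)$ using $X_k$ and $X_{k+1}^\infty$. (In the edge case $k = -1$, arising only from a terminal component with $s(C_n(x)) = 0$, one replaces the first condition with $\dist(C_n(x), X_0^\infty) > Q$.)

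For $T_n^k$, beyond $x \in A_n^k$ we need $C_n(x) \in \M$ and $x \in \Int(C_n(x))$. Maximality splits into two cases: if $C_n(x)$ is terminal, detectable by $\partial_o C_n(x) = \emptyset$ within $B(x, D_n + 1)$, it lies in $\M$ automatically; otherwise, by \Cref{lem:stage-amp}(4), $s(\Succ(C_n(x))) \leq k+1$, so $\Succ(C_n(x))$ lies in $B(x, D_{k+1})$ and is reconstructible locally from $X_{n+1}^\infty, \ldots, X_{k+1}^\infty$ (by trying each candidate stage $m \in \{n+1, \ldots, k+1\}$ in order and selecting the first one at which the component of $x$ in $G \res (X \setminus X_m^\infty)$ strictly contains $C_n(x)$), and then $a(\Succ(C_n(x))) > k$ is equivalent to $\dist(\Succ(C_n(x)), X_{k+1}^\infty) \leq Q$. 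For the interior, the set $L(C_n(x)) \subseteq \{n, \ldots, k\}$ is locally determinable in $B(x, D_n + Q)$, after which each distance constraint $\dist(x, X_{l_i}) \geq q(P-p+i)$ is a condition in $B(x, Q)$ using $X_{l_i}$, since $q(P-p+i) \leq qP < Q$. Combining these bounded-radius local characterizations, all three families $S_n, A_n^k, T_n^k$ lie in $\Bool_\Gamma(\{X_m, X_m^\infty \mid m \in \N\})$, proving the claim.
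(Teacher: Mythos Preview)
Your proof is correct and follows essentially the same approach as the paper's: both argue that the component $C_n(x)$, its stage, amplitude, successor, and interior are all determined by the restriction of finitely many of the $X_m, X_m^\infty$ to a ball of radius roughly $D_{k+1}+Q$ around $x$, using \Cref{lem:stage-amp}(4) to bound $s(\Succ(C_n(x)))\le k+1$. Your treatment of the edge cases $n=0$ and $k=-1$ is a bit more explicit than the paper's, but the arguments are otherwise identical.
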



To prove the claim,
We will show that $S_n$ is a $D_{n}$-local function of $X_n^\infty$ and $X_{n-1}$, then that $A_n^k$ and $T_n^k$ are $(D_{k+1}+Q)$-local functions of $\{X_l, X_l^\infty \mid n-1 \leq l \leq k+1\}$.
For $x \in X$, we have the following procedure to determine membership in each of these sets.
\begin{itemize}
    \item Check if $x \in X_n^\infty$. If so, then $x \notin S_n$.
    \item If not, then $x$ is in some $R \in \Comp(n)$, which can be computed from the intersection of $X_n^\infty$ and $B(x,D_n)$. 
    \item Check whether $R \notin \comp(n-1)$. Equivalently, whether $X_{n-1}$ meets $R$.
\end{itemize}
This determines whether $x \in S_n$ and
thus proves that that $S_n \in \Bool_{\Gamma}\left(\{X_n^\infty, X_{n-1}\} \right)$.
Now, suppose $x \in S_n$ and $R \in \Comp(n)$ is the unique component containing it, as above.
\begin{itemize}
    \item Check if $a(R) = k$. Equivalently, check if $X_k \cap B(R,Q) \neq \emptyset$ but $X_{k+1}^\infty \cap B(R,Q) = \emptyset$. 
\end{itemize}
This determines whether $x$ belongs to $A_n^k$, proving that $A_n^k \in \Bool_{\Gamma}\left(\{X_n^\infty, X_{n-1}, X_k, X_{k+1}^\infty\} \right)$.
Now, suppose $x \in A_n^k$.
To check if $x \in T_n^k$, we need to check if $R$ is maximal and $x \in \Int(R)$. If $k = n-1$ then $R$ is terminal so both are automatic. Otherwise $R$ is non terminal and we proceed as follows.
\begin{itemize}
    \item Compute $\Succ(R)$. By \Cref{lem:stage-amp} this has stage at most $k+1$, so we can compute it by computing the unique members of $\comp(l)$ containing $R$ for $l = n+1,\ldots,k+1$ and stopping when we first find one strictly larger than $R$. 
    \item Check whether $R$ is maximal. This is equivalent to $a(\Succ(R)) > k$, or $X_{k+1}^\infty \cap B(\Succ(R), Q) \neq \emptyset$.
    \item Compute the set $L(R) = \{l \mid n \leq l \leq k \text{ and } B(R,Q) \cap X_l \neq \emptyset \}$.
    \item For each $l \in L(R)$, compute $\dist(x,X_l)$ (noting that this is guaranteed to be at most $D_n + Q$). Use these distances to decide whether $x \in \Int(R)$. 
\end{itemize}
This proves that $T_n^k \in \Bool_{\Gamma}(\{X_l, X_l^\infty \mid n -1 \leq l \leq k+1\})$, concluding the proof of Claim \ref{claim:sn_ank_tnk} and thus of Theorem \ref{thm:bgd-toast}.
\end{proof}

\subsection{Putting it all together}

Combining the constructions of the last three sections allows us to prove our main results on the existence of toasts with simple layers.

\begin{proof}[Proof of Theorem \ref{thm:intro_asdim_toast}]
    By Lemma \ref{lem:rainbow_toast}, $G$ admits a rainbow toast of dimension $d$ using sets in $\Bool_\Gamma(\mathcal{A})$. Thus by Lemma \ref{lem:easy_BGD}, it admits a BGD $( X_n )_{n \in \N}$ with each $X_n \in \Bool_\Gamma(\mathcal{A})$
    and with constants $d+2$ and $Q$ for any $Q$. 
    Given $q \in \N$, we choose $Q = q(d+3)$.
    Then Theorem \ref{thm:bgd-toast} gives us a $q$-toast with
    each layer in 
    $\boldDelta_\Gamma(\boldSigma_\Gamma(\mathcal{A}))$. 
\end{proof}

\begin{proof}[Proof of Theorem \ref{thm:p0d_toast}, hence of Theorem \ref{thm:intro_torus_toast}]
    By Corollary \ref{cor:basis_asdim}, Lemma \ref{lem:rainbow_toast}, and Lemma \ref{lem:pc_algebra}, 
    $G$ admits a rainbow toast of dimension $d$ using pseudo-clopen sets.
    Then by Lemma \ref{lem:pc_bgd} it admits a BGD $(X_n)_{n \in \N}$ with each $X_n$ open and with constants $d+2$ and $Q$ for any $Q$. 
    Given $q \in \N$, we choose $Q = q(d+3)$.
    Then Theorem \ref{thm:bgd-toast} gives us a $q$-toast with each layer in $\boldDelta_\Gamma(\boldSigma^0_1) = \boldDelta^0_2$. 
\end{proof}

\section{The rounding algorithm}\label{sec:rounding}

In this section we sketch the proof of \Cref{lem:mnp_rounding}: the existence of a bounded integral $(\Char_A - \Char_B)$-flow $\psi$ whose restriction to the set of edges meeting the layer $T_n$ is a local function of $T_0, \dots, T_n$ and some of the approximate flows $\phi_{m_0}, \dots, \phi_{m_n}$ from \Cref{lem:approximate_flows}.
Our rounding algorithm is identical to that in \cite{MNP}.
We drop their assumption that the toast pieces are connected, but this only slightly changes the proofs of Lemma 6.7 and Claim 6.9.1. 
We provide an outline of the algorithm here, 
and explain how the proofs of these statements change.
The reader can consult  \cite[Section~6]{MNP} for more details.

The heart of the algorithm is to iteratively apply the Integral Flow Theorem to the connected components of each toast layer after ``rounding'' our flows around the boundaries of that layer's pieces. 

\begin{lemma}[The Integral Flow Theorem, {\cite[Corollary 5.2]{MU}}]\label{lem:integral-flow}
    Let $G$ be a locally finite graph on vertex set $X$, and $f:X \to \Z$ an integer valued function.
    For any $f$-flow $\phi$, there is an integral $f$-flow $\psi$ with $||\phi - \psi||_\infty \leq 1$. 
\end{lemma}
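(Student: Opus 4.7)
The plan is to first reduce the problem to a purely discrete rounding problem on each edge, solve that rounding on finite subgraphs, and then lift to the locally finite setting via a compactness argument. Fix an arbitrary orientation of each undirected edge so that $\phi(e)$ is a well-defined real number, and write $\phi = \lfloor \phi \rfloor + \theta$ with $\theta(e) = \phi(e) - \lfloor \phi(e) \rfloor \in [0,1)$. Because $f$ is integer valued and $\lfloor \phi \rfloor^{out}(x)$ is an integer at every vertex, $\theta$ is itself an $h$-flow for some integer-valued $h\colon X \to \Z$. Finding the required $\psi$ now reduces to producing a $\{0,1\}$-valued $\eta$ on the oriented edges with $\eta^{out} = h$; then $\psi := \lfloor \phi \rfloor + \eta$ satisfies $\psi^{out} = f$ and $\|\psi - \phi\|_\infty < 1$ automatically.

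For the finite case, I would invoke total unimodularity of the signed vertex-edge incidence matrix. The polytope $P := \{\eta \in [0,1]^{E(G)} : \eta^{out} = h\}$ contains $\theta$ and hence is nonempty; since the defining matrix is totally unimodular and the right-hand side $h$ is integer valued, every vertex of $P$ is integer valued, so $P$ contains an integer point, giving the required $\eta$. Alternatively, one can argue directly: let $H$ be the subgraph of edges with $\theta(e) \in (0,1)$; while $H$ is nonempty, either it contains a cycle (along which one can shift $\theta$ alternately by $\pm\min_e \min(\theta(e), 1-\theta(e))$, preserving divergence while making at least one more edge integer), or $H$ contains a leaf whose single edge must already carry integer divergence contribution and thus be integer. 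Iterating makes $\theta$ fully integral.

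For the locally finite case, I would run a compactness argument in the product space $K := \prod_{e} \{\lfloor \phi(e) \rfloor, \lceil \phi(e) \rceil\}$, which is compact in the product topology. The set of $\psi \in K$ that are antisymmetric $f$-flows is the intersection of finitary clopen constraints, one per vertex $x$ demanding $\psi^{out}(x) = f(x)$. By the finite intersection property, it suffices to show that for every finite $F \subseteq X$ there exists $\psi \in K$ satisfying the flow equation at every $x \in F$. Given such $F$, take a slightly larger finite $F' \supseteq F$ containing all neighbors of $F$, apply the finite case to an appropriate restriction of $\phi$ to the subgraph induced by $F'$ (with the divergence at boundary vertices of $F'$ adjusted to be integer valued), and extend arbitrarily on edges not meeting $F'$.

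The hard part will be the bookkeeping in this finite reduction: when one restricts $\phi$ to the subgraph induced by $F'$, the divergence at boundary vertices of $F'$ need not be integer, so the finite rounding result does not apply directly. The fix is either to collapse $X \setminus F'$ to a single auxiliary vertex (which has integer divergence by antisymmetry of $\phi$ summed over $F'$, noting that a sum of flows at vertices telescopes to a sum over the edge boundary $\partial_E F'$) or to freely modify $\phi$ on a thin shell around $\partial_i F'$ to absorb the fractional boundary divergence into $f$. Once this accounting is in place, the cycle or TU argument on the finite piece provides the integer rounding, and the compactness scheme assembles the global $\psi$; verifying that the shell modification keeps $\|\cdot\|_\infty$ at most $1$ is the only nontrivial check.
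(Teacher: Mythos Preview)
The paper does not contain its own proof of this lemma; it is simply cited from \cite{MU} and used as a black box. So there is nothing to compare your argument against here.

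Your outline is essentially correct and follows the standard route to this classical result: reduce to rounding a $[0,1)$-valued flow with integer divergence, handle the finite case via total unimodularity of the incidence matrix (or equivalently cycle-pushing), and pass to the locally finite case by Tychonoff compactness of $K = \prod_e \{\lfloor \phi(e)\rfloor, \lceil \phi(e)\rceil\}$. A couple of minor remarks. First, your ``$<1$'' should be ``$\leq 1$'' (if $\phi(e)$ is already an integer and the rounding happens to assign $\eta(e)=1$ you would get difference exactly $1$); to match the stronger conclusion $\psi(e)\in\{\lfloor\phi(e)\rfloor,\lceil\phi(e)\rceil\}$ implicit in your choice of $K$, just note that the cycle-pushing argument never touches edges with $\theta(e)=0$, or restrict the TU polytope accordingly. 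Second, the ``hard part'' you flag is not hard if you take the collapse option: identifying $X\setminus F$ to a single vertex $*$ yields a finite (multi)graph on which $\theta$ has integer divergence at every vertex of $F$ by hypothesis and at $*$ because the total divergence of any flow on a finite graph is zero. The finite case then applies directly, and the resulting $\psi$ automatically lies in $K$ on the relevant edges; extend by $\lfloor\phi\rfloor$ elsewhere. No shell modification or extra $\|\cdot\|_\infty$ bookkeeping is needed.
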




Now fix a free action $\Z^d \acts X$, $A,B \subseteq X$, a 2-toast $T_0,T_1,\ldots$ on the Schreier graph $G$, $\varepsilon > 0$, and flows $\phi_0,\phi_1,\ldots$ on on $G$ as in the statement of the Lemma.
Let $(\phi_{m_n})_{n \in \N}$ be a subsequence of approximate flows with $(m_n)_{n \in \N}$ growing very fast. 
For each $n$, let $\pieces(n)$ denote the collection of connected components of $G \res T_n$. These have bounded diameter since those of $G^{\leq 2} \res T_n$ do.

For any $F \subseteq E(G)$, let $\Delta_F$ be the graph with vertex set the undirected edges of $F$ where two elements are adjacent if they are contained in a common triangle of $G$.

\begin{lemma}[{\cite[Lemma~6.4]{MNP}}]\label{lem:circuit}
    If $S \subseteq X$, then every finite component of $\Delta_{\partial_ES}$ has an Eulerian circuit.
\end{lemma}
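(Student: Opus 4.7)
The strategy is to invoke Euler's classical theorem: a finite connected graph admits an Eulerian circuit if and only if every vertex has even degree. Finiteness and connectedness of the component are given by hypothesis, so the task reduces to verifying that every vertex of $\Delta_{\partial_E S}$ has even degree.

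Fix an undirected edge $e = \{x, y\} \in \partial_E S$, with $x \in S$ and $y \notin S$. First I would observe that the degree of $e$ in $\Delta_{\partial_E S}$ equals the number of triangles of $G$ containing $e$. Indeed, for any such triangle $\{x, y, z\}$, exactly one of its two other edges lies in $\partial_E S$: the edge $\{y, z\}$ if $z \in S$, and $\{x, z\}$ otherwise. Moreover, since a triangle is determined by its three vertices, distinct triangles through $e$ contribute distinct adjacent edges in $\Delta_{\partial_E S}$ (two edges are contained in a common triangle of $G$ only if they share an endpoint, in which case the triangle is uniquely determined). So it suffices to show that the triangle count is even.

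To this end I would exploit the translation structure of $G$, whose generating set $\{\gamma \in \Z^d : \|\gamma\|_\infty = 1\}$ is symmetric. Writing $y = x + \gamma$ for the corresponding generator $\gamma$, triangles through $e$ are in bijection with generators $\gamma_1$ such that $\gamma_1 - \gamma$ is also a generator, the third vertex being $z = x + \gamma_1$. The map $\gamma_1 \mapsto \gamma - \gamma_1$, geometrically reflection of $z$ through the midpoint of $e$, is an involution on this set by symmetry of the generating set: both $\gamma - \gamma_1 = -(\gamma_1 - \gamma)$ and $(\gamma - \gamma_1) - \gamma = -\gamma_1$ lie in the generating set whenever $\gamma_1$ and $\gamma_1 - \gamma$ do. It has no fixed points, since a fixed point would force $\gamma_1 = \gamma / 2$, which cannot lie in $\Z^d$ because some coordinate of $\gamma$ has absolute value $1$. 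Hence the set has even cardinality, giving $e$ even degree, and the lemma follows from Euler's theorem. The only substantive bookkeeping is the injectivity of the triangle-to-edge correspondence in paragraph two, which is a straightforward case analysis on whether $z \in S$.
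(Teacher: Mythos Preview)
Your proof is correct. The paper does not supply its own argument for this lemma, citing \cite[Lemma~6.4]{MNP} instead, so there is nothing to compare against directly; your approach via Euler's theorem and the fixed-point-free involution $\gamma_1 \mapsto \gamma - \gamma_1$ on third vertices of triangles is the natural one and is presumably what appears in the cited source.
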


If $S \subseteq X$ is finite, then every component of $\Delta_{\partial_S S}$ is finite and thus has an Eulerian circuit. If $S$ is connected, then every such component has the form $\partial_E S'$ (after identifying symmetric edges) where $S'$ is either a finite component of $G \res (X \setminus S)$ (a \textit{hole} of $S$) or the union of $S$ and all these components \cite[Page 36]{MNP}

We now sketch the construction of $\psi$.
We will treat $\psi$ as mutable, updating its values as the construction proceeds.
Whenever we modify $\psi$ on some directed edge, we will always implicitly perform the opposite modification on the reverse edge, so that the result is still a flow.

\begin{enumerate}
    \item Initialize $\psi$ to be equal to $\phi_{m_n}$ on $\partial_E T_n$ for each $n$ and 0 elsewhere. 
    \item For each $n \geq 1$, we do the following in parallel for each component $S \in \pieces(n)$ and each component $C$ of $\Delta_{\partial_E S}$:
    \begin{enumerate}
    \item Find an $S'$ as in the paragraph after Lemma \ref{lem:circuit} with $C = \partial_E S'$.
    \item Find an Eulerian circuit $(u_1,v_1),\ldots,(u_t,v_t) = (u_1,v_1)$ of $C$, where for each $s$ we have $u_s \in S'$ and $v_s \not\in S'$.
    \item \emph{The adjustment step:} Add $[\psi^{out}(S')]-\psi^{out}(S')$ to $\psi(u_{t-1}, v_{t-1})$, so that the total flow leaving $S'$ is an integer.\footnote{$[x]$ denotes the nearest integer to $x \in \R$, rounding down if $x$ is a half-integer.}
    \item \emph{The rounding step:} For each $s \leq t-2$, add $[\psi(u_s,v_s)] - \psi(u_s,v_s)$ to each edge in the triangle witnessing that $(u_s,v_s)$ and $(u_{s+1},v_{s+1})$ are adjacent in $\Delta_{\partial_E S}$. 
    Since we add the same quantity along a cycle in the graph, this does not change the outflow from any vertex, i.e. we are adding a circulation at each step.

\noindent After these $t-2$ edits have been completed, the flow $\psi(u_s,v_s)$ is an integer for each $s \leq t-2$, and the total flow $\psi^{out}(S')$ is an integer, so the flow $\psi(u_{t-1}, v_{t-1})$ must also be an integer.
\end{enumerate}

    \item \emph{The completion step:} 
    The connected components of the graph $E(G) \setminus \bigcup_{n=1}^\infty \partial_E T_n$ are finite since $(T_n)$ is a toast.
    For each such component $D$, find an integral $(\Char_A - \Char_B - \psi^{out}) \res D$-flow $\psi_D$ on $G \res D$ with minimal sup norm. Add $\psi_D$ to $\psi$ on edges in $D$.
    
\end{enumerate}

In steps (b) and (3), we should make our choices using some local rule in order to get the local function part of the conclusion of Lemma \ref{lem:mnp_rounding}. 

We need to explain why a flow $\psi_D$ as in step (3) always exists. 
By the integral flow theorem, it is equivalent to show that there exists a $\Char_A - \Char_B$-flow which is equal to $\psi$ on $\partial_E D$. 
Recall that $\phi_\infty$ denotes the pointwise limit of the $\phi_m$'s and that it is a $\Char_A - \Char_B$-flow.
The rough idea for this is that for each $S'$ as in the algorithm above, if $\psi'$ denotes $\psi$ after the adjustment step, $\phi_\infty$ is so close to $\psi'$ (since $\psi$ was initialized as $\phi_{m_n}$ which was very close to $\phi_\infty$) that we can find a bounded circulation $\eta_{S'}$ (the \textit{equalizing} flow) so that $\phi_\infty + \eta_{S'} = \psi'$ on $\partial_E S'$. Therefore, if $\theta_{S'}$ denotes the circulation we add to $\psi$ in the rounding step, $\phi_\infty + \eta_{S'} + \theta_{S'}$ is a $\Char_A - \Char_B$-flow which is equal to $\psi$ on $\partial_E{S'}$. Adding these circulations for all $S'$ gives the desired witness. Again, for more details see \cite[Proof of Lemma 6.9]{MNP}. 


The following claim tells us that $\psi$ is well defined and bounded at the end of step (2) (Recall that boundedness is part of the conclusion of Lemma \ref{lem:mnp_rounding}.) It is also relevant to the conclusion that the restriction $\psi$ to the edges meeting $T_n$ is a local function of $\phi_{m_i}$ and $T_i$ for $i \leq n$. For $n \in \N$, Let us say ``step $(2)n$'' to refer to the sum of all the changes made in step (2) for $S \in \pieces(n)$. 

\begin{claim}[{\cite[Lemma 6.7]{MNP}}]
    Let $(x,y) \in E(G)$. Let $n \in \N$ be minimal with 
    $\dist(\{x,y\},T_n) \leq 1$.
    \begin{enumerate}
        \item Within step (2), $\psi(x,y)$ is altered only in step $(2)n$.
        \item The difference in $\psi(x,y)$ before and after step $(2)n$ is at most $3^d /2$. 
    \end{enumerate}
\end{claim}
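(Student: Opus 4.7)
The plan is to examine which substeps of the rounding algorithm can alter $\psi(x,y)$ and use the toast separation property to localize the alterations to step $(2)n$.

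The structural observation underlying part (1) is this: step $(2)m$ modifies $\psi(x,y)$ only when there exist $S \in \pieces(m)$ and a triangle $\{u,v,w\}$ of $G$ with $(u,v) \in \partial_E S$ (so $u \in S$ and $v \notin S$) such that $(x,y)$ is one of the three edges of the triangle. Because $\pieces(m)$ consists of connected components of $G \res T_m$, any neighbor of $u \in S$ lying outside $S$ must also lie outside $T_m$; hence $u \in \partial_i T_m$, and by virtue of being in a triangle with $(x,y)$ we get $u \in B(\{x,y\}, 1)$. This observation rules out $m < n$ at once by minimality of $n$. For $m > n$, I combine the witness $u \in \partial_i T_m \cap B(\{x,y\},1)$ with some $z \in T_n \cap B(\{x,y\},1)$ guaranteed by $\dist(\{x,y\},T_n) \leq 1$. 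A case analysis on whether $z$ and $u$ lie in $\{x,y\}$ or are merely adjacent to an endpoint bounds $\dist(z, u)$: in the easy subcases this bound is $\leq 1$, contradicting the $2$-toast property $\dist(T_n, \partial_i T_m) \geq 2$. The remaining subcase requires re-examining the witnessing triangle to produce a second $\partial_i T_m$ vertex closer to $T_n$.

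For part (2), each alteration of $\psi(x,y)$ during step $(2)n$ arises either in the adjustment step (at most once per circuit in which $(x,y) = (u_{t-1},v_{t-1})$) or in the rounding step (at most once per triangle containing $(x,y)$ that witnesses a consecutive pair of edges in some Eulerian circuit). Every alteration adds a quantity of the form $[r] - r \in [-\tfrac12, \tfrac12]$, so its absolute value is at most $\tfrac12$. The total number of alterations is bounded by a geometric count involving the number of triangles of $G$ through the fixed edge $(x,y)$ and the number of pieces $S \in \pieces(n)$ giving rise to them. For the Schreier graph of $\Z^d$ with the $\ell^\infty$ generating set, the number of triangles through an edge is $2 \cdot 3^{d-1} - 2$, and when combined with the per-alteration bound of $\tfrac12$ and a small adjustment contribution, one obtains a total modification of at most $3^d/2$.

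The main obstacle is the tight subcase of part (1). When both $u \in \partial_i T_m$ and $z \in T_n$ sit at distance exactly $1$ from $\{x,y\}$ but are distinct from its endpoints, the triangle inequality yields only $\dist(z, u) \leq 2$, matching the $2$-toast bound with no immediate contradiction. Resolving this case will rely on the specific structure of the witnessing triangle $\{u,v,w\}$ --- in particular, the fact that its ``outside'' vertex $v \notin T_m$ is adjacent to $u$ and to a point of $\{x,y\}$ near $z$ --- to produce an additional short path between $T_n$ and $\partial_i T_m$ and trigger the $2$-toast contradiction.
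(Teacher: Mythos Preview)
For part (1) you miss the paper's key simplifying observation and thereby create unnecessary difficulty. You note only that some vertex $u\in\partial_i T_m$ lies in $B(\{x,y\},1)$ (coming from \emph{one} boundary edge of the witnessing triangle), but the paper extracts more: if $\psi(x,y)$ is altered at step $(2)m$ then \emph{both} endpoints satisfy $x,y\in\partial_i T_m\cup\partial_o T_m$. Indeed, either $(x,y)\in\partial_E T_m$ itself, or there is a common neighbor $z$ with $(z,x),(z,y)\in\partial_E T_m$, so the relevant triangle has two edges in $\partial_E T_m$, not one. With this in hand the paper dispatches $m>n$ in a single line by appeal to toast property (3), and no ``tight subcase'' ever arises. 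Your proposed resolution of that subcase---``re-examining the witnessing triangle to produce a second $\partial_i T_m$ vertex closer to $T_n$''---is a plan rather than an argument, so as written your proof of (1) has a gap. The fix is to adopt the two-endpoint observation from the start, which makes your entire case analysis on whether $z$ and $u$ lie in $\{x,y\}$ unnecessary.

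Your treatment of part (2) is essentially the paper's argument: each alteration has magnitude at most $\tfrac12$, the number of rounding-step alterations is bounded by the number of triangles of $G$ through $(x,y)$, and one adds a single adjustment-step contribution. The paper uses the cruder triangle bound $3^d-2$ rather than your $2\cdot 3^{d-1}-2$, but otherwise the reasoning matches. Your phrase about ``the number of pieces $S\in\pieces(n)$ giving rise to them'' is superfluous: the paper shows each triangle is used at most once across \emph{all} relevant $S'$ (by uniqueness of the $S'$ containing a given boundary edge and the fact that Euler circuits do not repeat edges), so there is no need to count pieces separately.
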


\begin{proof}
    It is clear that the value $\psi(x,y)$ is changed at step $(2)m$ only if $(x,y) \in \partial_ET_m$ or there is some $z$ with $(z,x),(z,y) \in \partial_E T_m$. In particular we must have $x,y \in \partial_iT_m \cup \partial_oT_m$. This clearly fails for $m < n$ by our choice of $n$. For $m > n$ it fails by (3) in the definition of toast.  This proves part (1).

    For part (2), we claim that the rounding steps alter the value $\psi(x,y)$ at most once for each triangle in $\Delta_{\partial_E T_n}$ containing $\{x,y\}$. There are at most $3^d - 2$ such triangles, so after adding an extra $1/2$ for the adjustment step we are done. 

    Indeed, suppose first that $(x,y) \in \partial_ET_n$. Then there is a unique $S'$ as in the algorithm with $(x,y) \in \partial_E S'.$ $\psi(x,y)$ is only altered during the rounding step for $S'$, and in that step there is at most one alteration per triangle since Eulerian circuits do not repeat edges. 

    Otherwise, $\psi(x,y)$ is altered at most once for each $S'$ as above and $z \in X$ with $\{x,y,z\}$ a triangle in $G$ and $(z,x),(z,y) \in \partial_ES'$. As in the previous paragraph, the choice of $S'$ given $z$ is unique, so we are done. 
\end{proof}

This argument is where the assumption that components of $\pieces(n)$ have distance at least 3 from each other is used in \cite{MNP}. It simplifies the analysis of the final case a bit, since there is at most one $S \in \pieces(n)$ with $\{x,y\} \subseteq B(S,1)$. 

There is an analogous Claim used to show that the aforementioned equalizing flows exist and are bounded \cite[Claim 6.9.1]{MNP}. 
This is actually the spot where one needs $||\phi_{m_n} - \phi_\infty||$ to be sufficiently small compared to the diameter bound on members of $\pieces(n)$. 
The proof is essentially the same, and still works without the distance at least 3 assumption. 

This, together with the bound from the Integral Flow Theorem, shows that $\psi$ is still bounded after step (3), and by construction it is an integral $\Char_A - \Char_B$-flow, completing our sketch.

\section*{Acknowledgments}

We are very grateful to Andrew Marks and Steve Jackson for suggesting that the toast construction in \cite{GJKS} might yield simpler layers. Thanks to Forte Shinko for useful conversations regarding Section \ref{sec:asdim}.

\bibliographystyle{alphaurl}
\bibliography{bib}
\end{document}